\theoremstyle{plain}
\newtheorem{theo+}{Theorem}
\numberwithin{theo+}{section}
\newtheorem{prop+}[theo+]{Proposition}
\newtheorem{coro+}[theo+]{Corollary}
\newtheorem{lemm+}[theo+]{Lemma}
\theoremstyle{definition}
\newtheorem{defi+}[theo+]{Definition}
\newtheorem{problem}[theo+]{Problem}
\newtheorem*{pb-prime}{Problem 1$\mathbb{'}$}
\newtheorem{not+}[theo+]{Notation}
\theoremstyle{remark}
\newtheorem{rema+}[theo+]{Remark}
\newenvironment{theorem}{\begin{theo+}}{\end{theo+}}
\newenvironment{proposition}{\begin{prop+}}{\end{prop+}}
\newenvironment{corollary}{\begin{coro+}}{\end{coro+}}
\newenvironment{lemma}{\begin{lemm+}}{\end{lemm+}}
\newenvironment{remark}{\begin{rema+}}{\end{rema+}}
\newenvironment{definition}{\begin{defi+}}{\end{defi+}}
\newenvironment{notation}{\begin{not+}}{\end{not+}}
\newcommand{\defin}[1]{%
\relax\ifmmode%
\textcolor{blue}{#1}%
\else \textcolor{blue}{\emph{#1}}%
\fi%
}
\newcommand{\setC}{\mathbb{C}}
\newcommand{\bC}{\setC}
\renewcommand{\Re}{\operatorname{Re}}
\renewcommand{\Im}{\operatorname{Im}}
\newcommand{\B}{\mathcal{B}}
\newcommand{\PP}{\mathcal{P}}
\newcommand{\ZZ}{\mathcal{Z}}
\newcommand{\F}{\mathcal{F}}
\newcommand*\diff{\mathop{}\!\mathrm{d}}
\numberwithin{equation}{section}
\newcommand {\Ga} {\Gamma}
\newcommand{\bZ}{\mathbb Z}
\newcommand{\N}{\mathcal N}
\newcounter{margnotes}
\begin{document}

\title[The translation geometry of P\'olya's shires]
{The translation geometry of P\'olya's shires}

\author[R.~Bøgvad]{Rikard Bøgvad}
\address[Rikard Bøgvad]{Department of Mathematics, Stockholm University, SE-106 91
Stockholm, Sweden}
\email{rikard@math.su.se}

\author[B.~Shapiro]{Boris Shapiro}
\address[Boris Shapiro]{Department of Mathematics, Stockholm University, SE-106 91
Stockholm, Sweden \& Department of Mathematics, Guangdong Technion - Israel Institute of Technology, 241 Daxue Road, Shantou, Guangdong, China, 515063}
\email{shapiro@math.su.se}

\author[G.~Tahar]{Guillaume Tahar}
\address[Guillaume Tahar]{Beijing Institute of Mathematical Sciences and Applications, Huairou District, Beijing, China}
\email{guillaume.tahar@bimsa.cn}

\author[S.~Warakkagun]{Sangsan Warakkagun}
\address[Sangsan Warakkagun]{Department of Mathematics, Khon Kaen University, Khon Kaen, Thailand}
\email{sangwa@kku.ac.th}

\date{\today}
\keywords{Linear differential operator, Translation surface, Singular flat metric, Voronoi diagram, Asymptotic root-counting}

\begin{abstract} 
In his shire theorem, G. P\'olya proves that the zeros of iterated derivatives of a meromorphic function in the complex plane accumulate on the union of edges of the Voronoi diagram of the poles of this function. By recasting the local arguments of P\'olya into the language of translation surfaces, we prove its generalisation describing the asymptotic distribution of the zeros of a meromorphic function on a compact Riemann surface under the iterations of a linear differential operator $T_\omega: f \mapsto \frac{df}{\omega}$ where $\omega$ is a given meromorphic $1$-form. The accumulation set of these zeros is the union of edges of a generalised Voronoi diagram defined by the initial function $f$ together with the singular flat metric on the Riemann surface induced by $\omega$. This result provides the ground for a novel approach to the problem of finding a flat geometric presentation of a translation surface initially defined in terms of algebraic or complex-analytic data.
\end{abstract}

\maketitle

\tableofcontents

\section{Introduction}\label{basicq}

\subsection{Short historical account}

The classical shire theorem of G.~P\'olya claims that for a meromorphic function $f$ with the set $S$ of its poles, the  zeros of its iterated derivatives $f^{(n)}$ accumulate when $n\to +\infty$ along the edges of the Voronoi diagram associated with $S$, see~\cite{Hay}.  
An illustration of this famous result is shown in Figure~\ref{vv3}.
\begin{figure}[!hbt]
\centering
\includegraphics[width=0.55\linewidth]{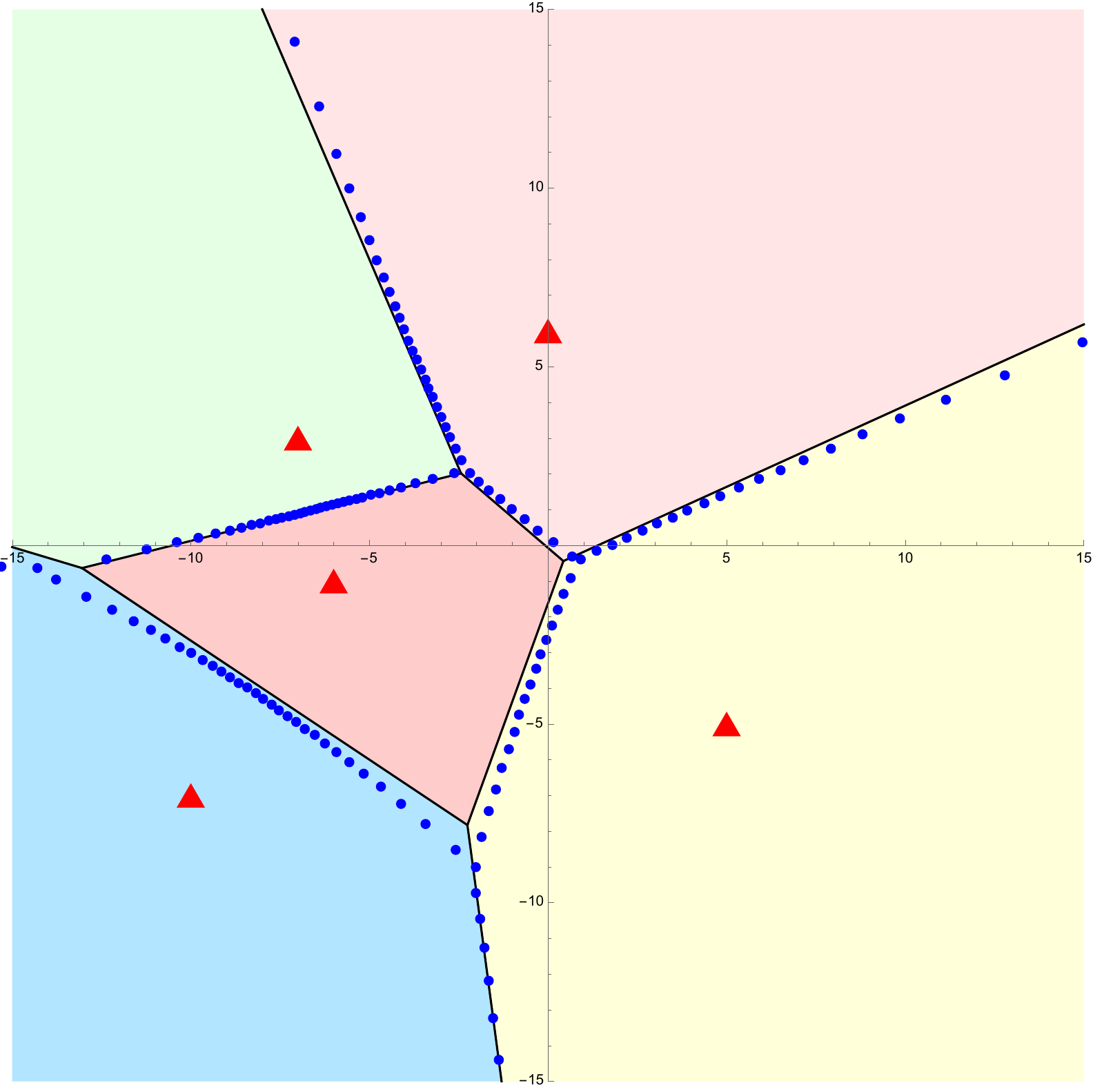}
\caption {The Voronoi diagram determined by the five poles (red triangles) 
of some rational function $f$, together with the zeros of $f^{(20)}$ in blue dots.}\label{vv3}
\end{figure}
 
Several prominent mathematicians including N.~Wiener, E.~Hille, and R.~P.~Boas have continued P\'olya's line of study soon after the publication of the latter theorem, see references in ~\cite{Pol1}. Over the years a number of articles extending and generalising the original result has appeared, see e.g. \cite{ClEd, Ge, Ge2, PrSh1, PrSh2, Rob}.

More recent publications have concentrated on the weak limits of the root-counting measures for the zeros of $f^{(n)}$. In particular, Ch.~H\"agg and R. B\o gvad obtained a measure-theoretic refinement of P\'olya's shire theorem for rational functions, see~\cite{BoHa}. Using currents they also proved a similar result for Voronoi diagrams associated with generic hyperplane arrangements in $\mathbb{C}^{m}$.

Later Ch.~H\"agg extended the main result of~\cite{BoHa} by considering meromorphic functions of the form $f=Re^U$ where $R$ is a rational function with at least $2$ distinct poles and $U$ is a non-constant polynomial, see~\cite{Ha}. The class of such functions coincides with the class of meromorphic functions which are quotients of two entire functions of finite order, each having a finite number of zeros, see~\cite{Ti}.

\medskip
In  ~\cite{Ke}, V.~Keo extended the results of ~\cite{BoHa} and ~\cite{Ha} by studying a particular meromorphic function of a different type, namely, $f(z)=1/\left(1-e^z\right)$, which has an infinite number of poles and whose iterated derivatives are related to the Eulerian polynomials. In addition, he considered iterations of rational functions under the action of differential operators of the form $\mathcal{D}=g(z)\dfrac{\partial}{\partial z}$, where $g(z)$ is a polynomial in $z$, and studied in details the differential operator $z\dfrac{\partial}{\partial z}$, as well as formulated some conjectures. These cases are closely related to the topic of the present paper.

Finally, in \cite{Wei}, M.~Weiss provided  a generalisation of P\'olya's classical theorem to automorphic functions in the half-plane.

\subsection{Our set-up}

There are (at least) two natural ways to generalise the geometry of the complex plane to surfaces of higher genus such as:
\begin{itemize}
    \item hyperbolic surfaces (the metric still has constant curvature,  but it is no longer flat);
    \item translation surfaces (the metric is still flat, but it has conical singularities).
\end{itemize}

Below we generalise P\'olya's shire theorem to the case of meromorphic functions on compact Riemann surfaces equipped with a flat metric with conical singularities and study a class of linear differential operators corresponding to the complex-analytic data defining a translation structure, i.e. we use the second of the above generalisations. For the background on translation surfaces see \cite{Zor}.

Namely, let $X$ be a compact Riemann surface with a fixed meromorphic $1$-form $\omega$. We associate to the pair $(X,\omega)$ the linear differential operator $T_{\omega}$ acting on meromorphic functions on $X$ as 
\begin{equation}\label{eq:oper}
T_{\omega}: f\mapsto \frac{df}{\omega}.
\end{equation}

Now given a meromorphic function $f$ on $X$, we are interested in the asymptotic of zeros for the sequence $(f_{n})_{n\in \mathbb{N}}$ of meromorphic functions  defined inductively as 
$$f_{0}=f,~f_{n+1}=T_{\omega}(f_{n})=T_{\omega}^{n+1}(f),~n\geq 1. $$

\medskip
\noindent
{\bf Terminology.}
In this article, we consider meromorphic functions and differentials, as well as Puiseux series. For this reason, we will adopt a mixed terminology:
\begin{itemize}
    \item When a point is \textit{explicitly} designated as a zero, its order is taken to be a positive integer.
    \item When a point is \textit{explicitly} designated as a pole, its order \textit{as a pole} is taken to be a positive integer. In particular, a simple pole is a pole of order $1$.
    \item When a point is designated more generally as a singularity, its order may be either positive or negative. If the number is negative, then the singularity is in fact a pole.
\end{itemize}

\medskip
\begin{definition}\label{defn:limitset}
For a meromorphic function $f$ on a Riemann surface $X$ and any operator $T$ acting on the space of meromorphic functions, define the \textcolor{blue}{\textit{limit set}} $\mathcal{L}(T,f)\subseteq X$  as the set of points $z \in X$ such that any open neighbourhood of $z$ in $X$ contains a zero of $T^{n}(f)$ for infinitely many $n$.
\end{definition}

In the classical shire theorem, the limit set coincides with the Voronoi diagram in $\mathbb{C}$ associated with the set of poles of a meromorphic function. In our generalised settings, the Voronoi diagram of a meromorphic function on a Riemann surface is defined with respect to the so-called \textcolor{blue}{\textit{principal polar locus}} and the singular flat metric induced by the translation structure on the surface.

\begin{definition}\label{defn:localfactor}
Consider a (compact) Riemann surface $X$ with a meromorphic $1$-form $\omega$. 
Given a point $z_{0}\in X$ which is not a pole of $\omega$, we say that a meromorphic function $f$ is \textcolor{blue}{\textit{locally factorised}} by a primitive of  $\omega$ having no pole at $z_0$ if there exist:
\begin{itemize}
    \item a neighbourhood $U$ of $z_{0}$ in $X$,
    \item a holomorphic function $\phi$ defined on $U$;
    \item a holomorphic function $g$ defined on a neighbourhood of $\phi(z_{0})$ in $\mathbb{C}$
\end{itemize}
such that $\omega = d\phi$ and $f = g \circ \phi$ in $U$.
\end{definition}

Note that the property above is independent of any choice of branch at a zero of $\omega$, since any local primitive of $\omega$ extends as a single-valued function on a neighborhood of the zero.

Below by a \textcolor{blue}{\textit{non-zero meromorphic $1$-form}} we always mean a $1$-form not vanishing identically on the underlying Riemann surface. 

\begin{definition}\label{defn:PPL}
Consider a non-zero meromorphic $1$-form $\omega$ and a fixed meromorphic function $f$ on a compact Riemann surface $X$. The \textcolor{blue}{\textit{principal polar locus}} $\mathcal{PPL}(\omega,f)$ of the pair $(\omega,f)$ is the subset of $X$ containing:
\begin{itemize}
    \item the poles of $f$ that are not poles of $\omega$;
    \item the zeros of $\omega$ at which  $f$ is not locally factorised by a primitive of $\omega$ (see Definition~\ref{defn:localfactor}).
\end{itemize}
\end{definition}

\begin{remark}
In the original shire theorem, the Riemann surface is the extended complex plane $\mathbb{CP}^1=\mathbb{C} \cup \lbrace{ \infty \rbrace}$, $\omega=dz$ and $\mathcal{PPL}(\omega,f)$ is the set of the affine poles of $f$ (i.e. poles different from $\infty$).

\end{remark}

The main result of our paper is as follows. 

\begin{theorem}\label{thm:MAIN}
Consider a non-zero meromorphic $1$-form $\omega$ on a compact Riemann surface $X$, its associated differential operator $T_{\omega}$
 and any meromorphic function $f$ on $X$ such that $\mathcal{PPL}(\omega,f) \neq \emptyset$. Then the following two facts are valid. 

\medskip
\noindent
\rm{(i)} The limit set $\mathcal{L}(T_{\omega},f)$ is the union of:
\begin{itemize}
    \item the Voronoi diagram $\mathcal{V}_{\omega,f}$ defined by $\mathcal{PPL}(\omega,f)$ (see Definition~\ref{defn:Voronoi} below);
    \item the poles of $\omega$ of order at least two;
    \item the simple poles of $\omega$ that are not poles of $f$. 
\end{itemize} 

\medskip
\noindent
\rm{(ii)} The asymptotic root-counting measure of the sequence $(T_{\omega}^{n} (f))_{n \in \mathbb{N}}$ is given by 
$$\frac{\mu_{\omega,f}}{A}+\frac{1}{A} \sum\limits_{p \in \mathcal{P}} (d_{p}-1)\delta_{p},$$
where $\mu_{\omega,f}$ is the Cauchy measure of the Voronoi diagram $\mathcal{V}_{\omega,f}$ (see Definition~\ref{defn:Cauchy} below), $\mathcal{P}$ is the set of poles of $\omega$, and $A = \sum\limits_{z \in \mathcal{PPL}(\omega,f)} (a_{z}+1)$ where $z$ is a zero of $\omega$ of order $a_{z}$ and $p \in \mathcal{P}$ is a pole of $\omega$ of order $d_{p}$.
\end{theorem}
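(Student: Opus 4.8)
My strategy would be to reduce everything to local model computations by pulling back P\'olya's original local arguments through the translation structure. The key point is that away from the singularities of $\omega$, the $1$-form $\omega$ gives a local holomorphic coordinate $w$ with $\omega = dw$, and in that coordinate $T_\omega$ is literally $d/dw$, so classical P\'olya theory applies verbatim in each flat chart. The hard analysis is therefore concentrated at (a) the points of $\mathcal{PPL}(\omega,f)$, which act as the "effective poles," and (b) the poles of $\omega$, where the flat metric degenerates. So the first step is to establish, via a normal-families / Montel argument in flat charts, that $\mathcal{L}(T_\omega,f)$ is contained in the stated union: a point $z_0$ with a flat neighbourhood isometric to a disc and lying strictly closer (in the flat metric $|\omega|$) to one point of $\mathcal{PPL}(\omega,f)$ than to all others cannot be an accumulation point of zeros, by the same domination estimate P\'olya uses — the term coming from the nearest effective pole dominates after enough differentiation. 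One must check that a zero of $\omega$ at which $f$ \emph{is} locally factorised contributes nothing (the factorisation lets one absorb it into a genuine holomorphic chart), which is exactly why $\mathcal{PPL}$ is defined the way it is.

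The second step is the reverse inclusion: every point of the generalised Voronoi diagram, and every higher-order pole of $\omega$, and every simple pole of $\omega$ not a pole of $f$, genuinely attracts zeros. For the Voronoi edges this is P\'olya's competition argument: at a point equidistant from two nearest effective poles, after pulling back to a flat chart one has two competing exponential-type terms whose ratio traces out a curve that must cross zero, producing zeros of $T_\omega^n f$ arbitrarily close by for infinitely many $n$. For the poles of $\omega$, one needs the local model of $T_\omega$ near a pole of order $d$: in a coordinate $t$ with $\omega \sim t^{-d}\,dt$ (or $\omega \sim t^{-1}dt$ in the simple case), iterating $T_\omega$ introduces a growing number of zeros clustering at $t=0$ — this is precisely where the $(d_p - 1)$ contribution in part (ii) comes from, and where the case distinction between simple poles that are/aren't poles of $f$ enters (a simple pole of $\omega$ that is also a pole of $f$ behaves like an ordinary point of the diagram machinery, whereas otherwise the operator itself manufactures a zero there).

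The third step, giving part (ii), is to promote the set-theoretic statement to a statement about the weak-$*$ limit of normalised root-counting measures $\frac{1}{\deg(T_\omega^n f)}\sum_{T_\omega^n f(z)=0}\delta_z$. Here I would follow the potential-theoretic approach of B\o gvad–H\"agg: write $\log|T_\omega^n f|$ (pulled into flat charts) and show it converges, after normalisation, to a subharmonic function whose Laplacian is the claimed measure. On each flat chart the local limit of $\frac1n\log|f_n|$ is $\max_i (\text{linear function associated to the $i$-th effective pole})$, a piecewise-linear convex function whose Laplacian is supported on the Voronoi skeleton with the Cauchy density $\mu_{\omega,f}$; near a pole $p$ of $\omega$ of order $d_p$ the local contribution of the iteration produces the atom $(d_p-1)\delta_p$; the global normalising constant $A = \sum_{z\in\mathcal{PPL}}(a_z+1)$ is pinned down by a degree count — it is the total growth rate of $\deg(T_\omega^n f)$, computed by the Riemann–Hurwitz / argument-principle bookkeeping for how many zeros $T_\omega$ adds per iteration, which localises to $a_z + 1$ per effective pole.

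\textbf{Main obstacle.} The delicate part is the behaviour at the poles of $\omega$, where the flat metric is incomplete or has infinite area and P\'olya's charts break down; controlling the number and location of the zeros that $T_\omega^n f$ accumulates there — and matching them precisely to the coefficient $(d_p-1)$ — requires a careful local analysis of the iterated operator in the model $\omega = t^{-d}dt$, rather than a direct appeal to the classical theory. A secondary technical point is handling the \emph{global} gluing: the local piecewise-linear limits on overlapping flat charts must be shown to patch into a single well-defined function on $X$ minus the singular locus, which uses that $\omega$ has well-defined periods only up to the translation-structure ambiguity, so one works with $|\omega|$-distances (a genuine function on $X$) rather than with a global primitive.
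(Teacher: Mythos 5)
Your overall architecture is the same as the paper's: reduce to local flat charts via the primitive $\phi$ of $\omega$, prove zero-freeness inside Voronoi cells by a coefficient-growth estimate, and promote $\tfrac{1}{n}\log\bigl|T_{\omega}^{n}f/n!\bigr|$ to a potential-theoretic object whose Laplacian gives the limiting measure. Your proposed ``second step'' (a P\'olya-style competition argument on the edges) is redundant in this framework: once the measure convergence of step~3 is established, the inclusion of the Voronoi skeleton in $\mathcal{L}(T_{\omega},f)$ follows for free, and this is exactly how the paper proceeds --- it deduces part~(i) from part~(ii) together with Proposition~\ref{prop:ZeroFree}.

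There is, however, a genuine gap in the analytic core of your sketch. You invoke ``the same domination estimate P\'olya uses --- the term coming from the nearest effective pole dominates after enough differentiation,'' treating the points of $\mathcal{PPL}(\omega,f)$ as if they were ordinary poles. But the points of $\mathcal{PPL}$ that are zeros of $\omega$ do \emph{not} appear as poles of the Voronoi function $g_z$: by Lemma~\ref{lem:path} they are branch points at which $g_z$ extends as a convergent \emph{Puiseux series} with non-integer exponents. P\'olya's classical coefficient estimate for a dominant simple pole does not cover this case; one needs an asymptotic for the Taylor coefficients of a holomorphic function with a Puiseux-type boundary singularity, and moreover a \emph{uniform} version of it (so that the zero-free conclusion holds on compact subsets of a Voronoi cell and feeds into the $L^1_{loc}$ argument). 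This is precisely the role of Orlov's theorem and its uniform extension (Theorem~\ref{thm:Orlov} and Corollary~\ref{cor:Orlov} in the appendix), without which Proposition~\ref{prop:ZeroFree} has no proof. A second missing ingredient is the control of $\tfrac{1}{n}\log\bigl|g^{(n)}/n!\bigr|$ \emph{near} the Voronoi skeleton, where two or more Puiseux singularities are equidistant and the normalised logarithm can oscillate and dip arbitrarily low; convergence in $L^{1}_{loc}$ there is not automatic and requires the Cartan--Boutroux minimum-modulus principle (Lemmas~\ref{lem:MinimumPrinciple}--\ref{lem:MainEstimate}) to bound the measure of the small sets where the function is exceptionally negative. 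Without these two ingredients, the ``piecewise-linear limit'' you describe is not established, and neither the Laplacian computation nor the limit-set identification goes through.
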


\begin{remark}
Pólya's original setup covers the case of meromorphic functions in $\mathbb{C}$ with at least two poles and possibly an essential singularity at $\infty$. In contrast, applying Theorem~\ref{thm:MAIN} to $\mathbb{CP}^{1}$ with $\omega=dz$, we cover only the case of rational functions. However, following our proof, one might extend Theorem~\ref{thm:MAIN} to the case of meromorphic functions with possibly essential singularities at any of the poles of $\omega$. We describe this extension briefly in \S~7.1.
\par
Contrary to Pólya's setting, our meromorphic functions do not need to have two poles. We only retain the assumption that the $\mathcal{PPL}(\omega,f)$ is nonempty. It may happen that the Voronoi diagram is trivial, in which case the zeros accumulate only at the poles of $\omega$.
\end{remark}

For a translation surface defined in terms of complex analysis (via a Fuchsian group and a modular form) or algebraic geometry, finding a presentation in terms of flat geometry—specifically, as a polygon with identified edges—is generally a difficult problem. A crucial outcome of Theorem~\ref{thm:MAIN} is that it determines the incidence structure of the cells of the Voronoi diagram from analytic data. In principle, computing the periods of the relative homology classes of arcs dual to the edges of this diagram is sufficient to construct a flat model. We discuss this perspective in more detail in \S~7.2.

\begin{remark}
The Cauchy measure that we define on the edges of the Voronoi diagram is not  invariant under the directional flow of the translation surface. However, it is continuous with respect to the standard Lebesgue measure induced by the flat metric.
\end{remark}

To conclude this introduction, we present three explicit examples of growing complexity, designed to showcase the scope and applicability of the theory.

\subsection{A first concrete example}

\medskip
\noindent
The following example is simple enough to allow an exact description of the zero distribution, rather than only an asymptotic one, explicitly showing how the Cauchy measure arises in the asymptotic root-counting measure.

\medskip
\noindent We consider iterated derivatives of $f(z)=\frac{2i}{z^{2}+1}$. Here, $f$ is a meromorphic function on $\mathbb{CP}^{1}$ and $\omega=dz$. The principal polar locus $\mathcal{PPL}(\omega,f)$ is formed by the two poles $\pm i$ of $f$. The Voronoi diagram $\mathcal{V}_{\omega,f}$ is the perpendicular bisector of $[-i,i]$ for the standard Euclidean metric induced by $dz$ and thus coincides with the real axis $\mathbb{R}$. As $i$ and $-i$ are regular points of $\omega$, the normalization factor $A$ (defined in Theorem~\ref{thm:MAIN}) is equal to $2$.

\medskip
\noindent {\bf Explicit determination of the zeros.}

For any $n \in \mathbb{N}$, we have
$f^{(n)}(z)=\frac{(-1)^{n}n!}{(z-i)^{n+1}}-\frac{(-1)^{n}n!}{(z+i)^{n+1}}.$
As a meromorphic function on $\mathbb{CP}^{1}$, $f^{(n)}$ has:
\begin{itemize}
    \item two poles of order $n+1$ at $i$ and $-i$;
    \item one zero of order $n+2$ at $\infty$;
    \item $n$ zeros (counted with multiplicity) in $\mathbb{C}$.
\end{itemize}
These $n$ zeros satisfy the equation $|z-i|^{n+1}=|z+i|^{n+1}$, so they lie in the perpendicular bisector of $[-i,i]$ i.e. the real axis $\mathbb{R}$, see Figure \ref{fig:early-ex}.
\par
For $z \in \mathbb{R}$, writing $z-i=-i\rho e^{i \theta}$ for $\rho>0$ and $-\frac{\pi}{2}<\theta<\frac{\pi}{2}$, we obtain $z+i=i\rho e^{-i\theta}$. The equation $f^{(n)}(z)=0$ then reduces to $e^{2i(n+1)\theta}=(-1)^{n+1}$.
\par
It follows that $f^{(n)}$ has $n$ simple zeros in $\mathbb{R}$ given by formula $\tan(\theta)$ where $-\frac{\pi}{2}<\theta<\frac{\pi}{2}$ and the following congruence is satisfied:
\begin{itemize}
    \item $\theta \equiv 0~[\pi/(n+1)]$ if $n$ is odd;
    \item $\theta \equiv \frac{\pi}{2n+2}~[\pi/(n+1)]$ if $n$ is even.
\end{itemize}
Real zeros are equally spaced with respect to the Cauchy measure that is the projection of the uniform angular measure at $i$ (or equivalently $-i$).

\medskip
\noindent {\bf Asymptotic behaviour.}

As $n$ tends to infinity, the root-counting measure weakly converges to a distribution such that:
\begin{itemize}
    \item $\lim\limits_{n \to +\infty} \frac{n+2}{2n+2}=\frac{1}{2}$ represents the proportion of the total multiplicity of zeros contributed by the zero at infinity;
    \item the measure of a segment of the real axis is the angular measure of the corresponding sector at $i$ (or $-i$) normalized by $2\pi$;
    \item the total measure of the real axis is $\frac{1}{2}$ because it is the projection of the uniform angular probability measure restricted to a half-circle.
\end{itemize}
This is in line with the predictions of Theorem~\ref{thm:MAIN} prescribing the asymptotic root-counting measure to be
$
\frac{\mu_{\omega,f}}{2}+\frac{\delta_{\infty}}{2}
$
.
\par
Compared with the classical theorem of Pólya, our approach provides the degrees of the zeros at infinity (at the poles of the differential $\omega$), which constitutes additional information.

\begin{figure}[h]
\includegraphics[width=0.7\linewidth]{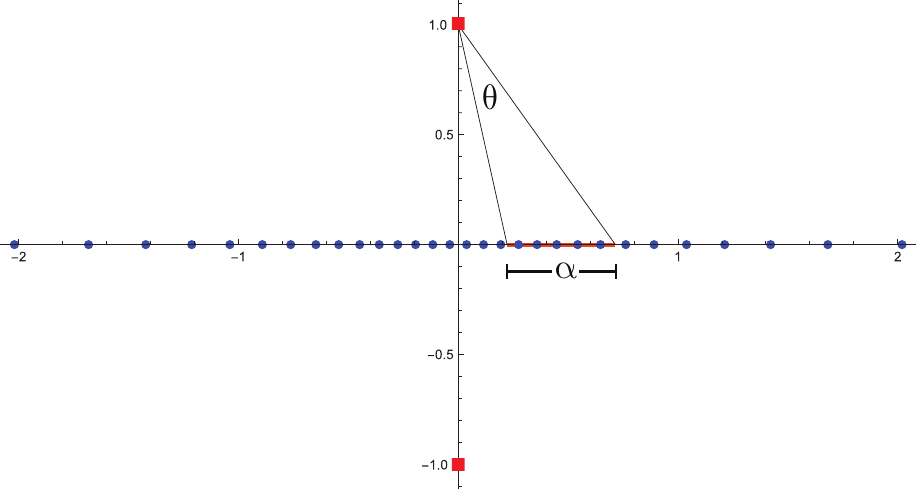}
    \caption{Zeros of $f^{(40)}(z)$ lie on the real line $\mathbb{R}$ in blue dots. The asymptotic Cauchy measure of the segment $\alpha$ is the proportion out of $2\pi$ of the angle $\theta$ from either pole $\pm i$ (in red squares) subtended by the segment.}
    \label{fig:early-ex}
\end{figure}

\subsection{A second example: a monomial linear operator} 
\medskip
\noindent

Among linear differential operators, some of the simplest are the monomial operators of the form $T_\ell=z^{\ell}\frac{d}{dz}$ for some $\ell\in \bZ$. Setting $\omega=z^{-\ell} dz$, these operators fall within the scope of the theory. Moreover, since $\omega=d\phi$ for $\phi=\frac{z^{1-\ell}}{1-\ell}$, the study of the asymptotic of the zeros of the iterates of $T_\ell$ reduces to the classical problem involving the usual derivative with respect to the variable $\phi$. 
In fact this procedure corresponds to taking the appropriate Riemann surface defined by $\phi$, its branched covering of $\bC P^1$ and using Theorem~\ref{thm:MAIN}.

\medskip
\noindent {\bf Root asymptotic for a simple example.}

For any $\ell \geq 2$, we   calculate the root asymptotic  of the sequence $\{T_\ell^n\left(-\frac{1}{z+a}\right)\}$ with $a\neq 0$ when $n\to +\infty$. 
\par
Theorem~\ref{thm:MAIN} in the case under consideration claims the following. The principal polar locus $\mathcal{PPL}(\omega,\frac{1}{z+a})$ is $\lbrace{-a,\infty \rbrace}$. The image of the Voronoi diagram under $\phi$ is the midline of $[\phi(\infty),\phi(-a)]$. It is a straight line $L$ parametrised by $t \mapsto (\frac{1}{2}+it)\phi(-a)$ (where $t \in \mathbb{R}$).
\par
The inverse image of line $L$ under ramified cover $\phi$ is a planar algebraic curve of equation $\Re(\frac{z}{a})^{\ell-1}=\frac{(-1)^{\ell-1}}{2}$. It is a curve formed by $\ell -1$ loops attached to $0$ each contained in a cone of angle $\frac{2\pi}{\ell-1}$. The limit set $\mathcal{L}(T,\frac{1}{z+a})$ is the branch of the curve contained in the cone containing $-a$. An illustration of the above root distribution can be found in Fig.~\ref{vvN2}. (The explicit formula for the lemniscate is obtained from the above equation by using $\ell=4$ and $a=1$).

\begin{figure}[!hbt]
\centering
\includegraphics[width=0.6\linewidth]{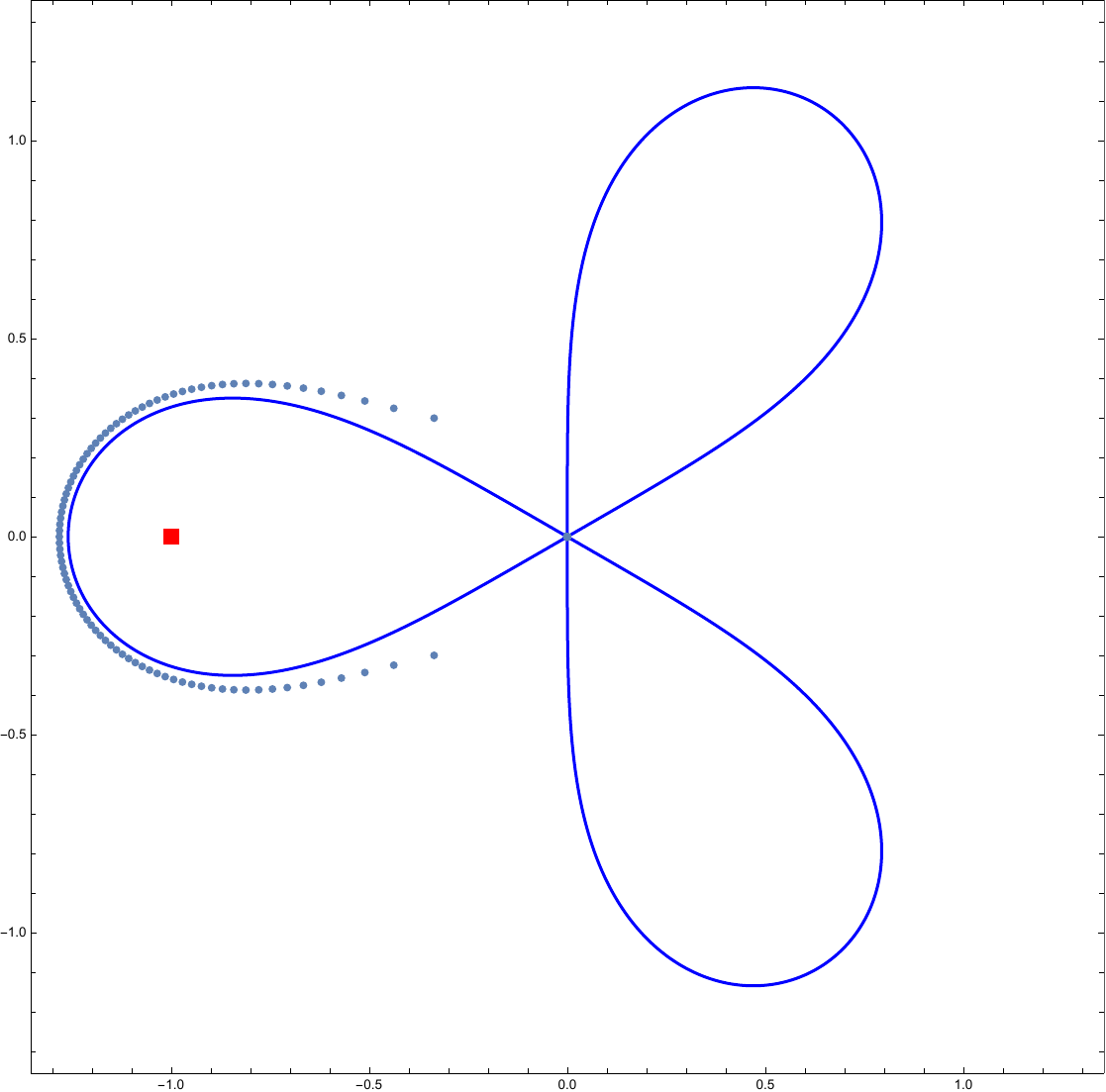}
\caption {The lemniscate with the roots of $T_4^{90}\left(-\frac{1}{z+1}\right)$ accumulating on one of its ovals.}\label{vvN2}
\end{figure}


\subsection{Two final examples in genus one.}

In the complex plane $\mathbb{C}$, we consider the lattice $\Lambda =\mathbb{Z} + \tau\mathbb{Z}$, where $\tau = \exp(\pi i/3)$. Let $X = \mathbb{C}/\Lambda$ be the associated elliptic curve. 

\begin{enumerate}
    \item Let $\omega = dz$  and let $ \wp(z)$ be the Weierstra{\ss} elliptic function on $X$, where $z$ is the coordinate on $\mathbb{C}$. Since $f$ has one pole of order 2 at 0 and $\omega$ has neither zeros nor poles, the principal polar locus $\mathcal{PPL}(\omega, \wp)$ in $X$ is $\{0\}$. Using the primitive $\phi(z) = z$ of $\omega$, it is seen that the limit set $\mathcal{L}(T_{\omega}, \wp)$ in  $X$ is the quotient by $\Lambda$ of the usual Voronoi diagram on $\mathbb{C}$ determined by the lattice points, see Figure \ref{fig:torus1}.

    \item For a more complicated example, we consider the derivative of the Weierstra{\ss} elliptic function on $X$, which we denote by $\wp_z$. Let $\omega = \wp_z(z)dz$ be the meromorphic 1-form and $f(z) = \wp_z(z)$ be the meromorphic function  on $X$. It is a standard fact that $\frac{1}{2}, \frac{\tau}{2}, \frac{1+\tau}{2}$ are the three simple zeros of $\omega$ in $X$. These points become poles of $T_\omega^n(f)$ for all $n \geq 1$, so $f$ is not locally factorised by a primitive of $\omega$ there by Lemma \ref{lem:criticalfactorisation}. Hence, they belong to $\mathcal{PPL}(\omega, f)$. Moreover, since the poles of $\omega$ and $f$ are the same, we have that $\mathcal{PPL}(\omega, f)$ is $\{\frac{1}{2}, \frac{\tau}{2}, \frac{1+\tau}{2} \}$. 
    
    The limit set $\mathcal{L}(T_{\omega}, f)$ in $X$ is the preimage under the ramified cover $\wp$ of the Voronoi diagram in the $\mathbb{C}$-plane determined by $\wp(\mathcal{PPL}(\omega, f))$, where the ramification points are precisely the points in  $\mathcal{PPL}(\omega, f)$, see Figure  \ref{fig:torus2}.
    \end{enumerate}

\begin{center}
    \begin{figure}[h]
        \centering
        \includegraphics[width=0.45\linewidth]{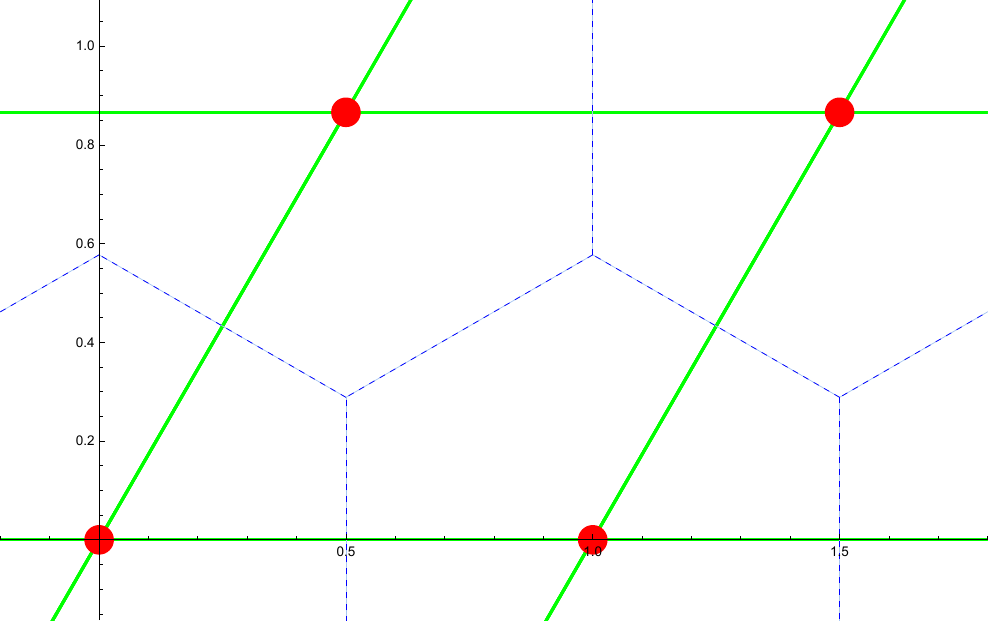} \quad
         \includegraphics[width=0.45\linewidth]{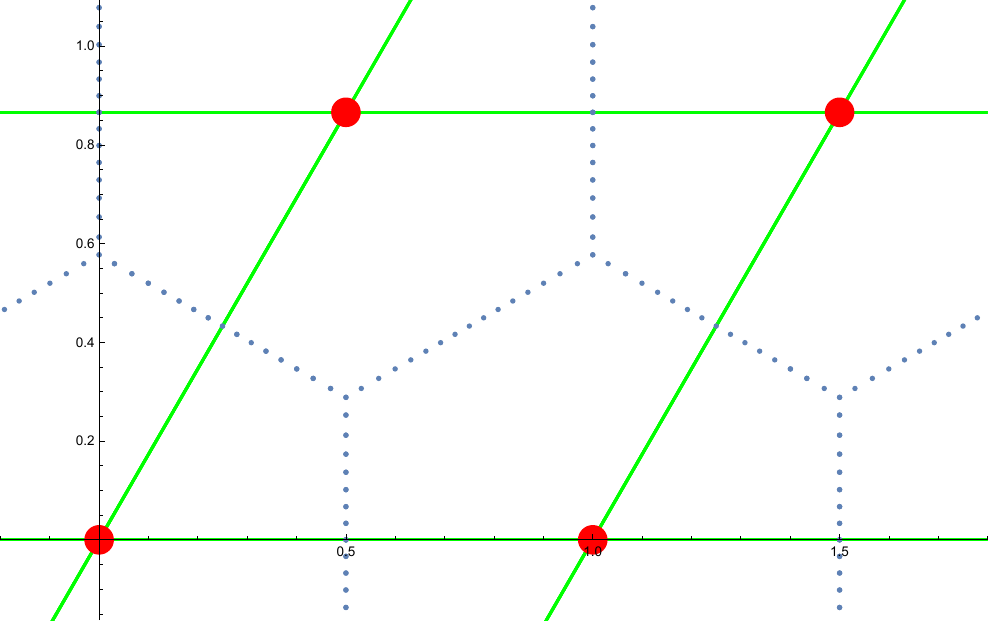} 
        \caption{Let $\Lambda = \mathbb{Z} + e^{\frac{i\pi}{3}}\mathbb{Z}$ and $\omega = dz$. Left: The Voronoi diagram determined by the poles of $\wp(z)$ (red dots) consists of the blue dashed lines. Right: The zeros of $\wp^{(45)}$ are in blue dots. In this case, parallelograms cut out by the green lines are fundamental domains of the action of $\Lambda$ on $\mathbb{C}$.}
        \label{fig:torus1}
    \end{figure}
\end{center}

\begin{center}
    \begin{figure}[h]
        \centering
        \includegraphics[width=0.45\linewidth]{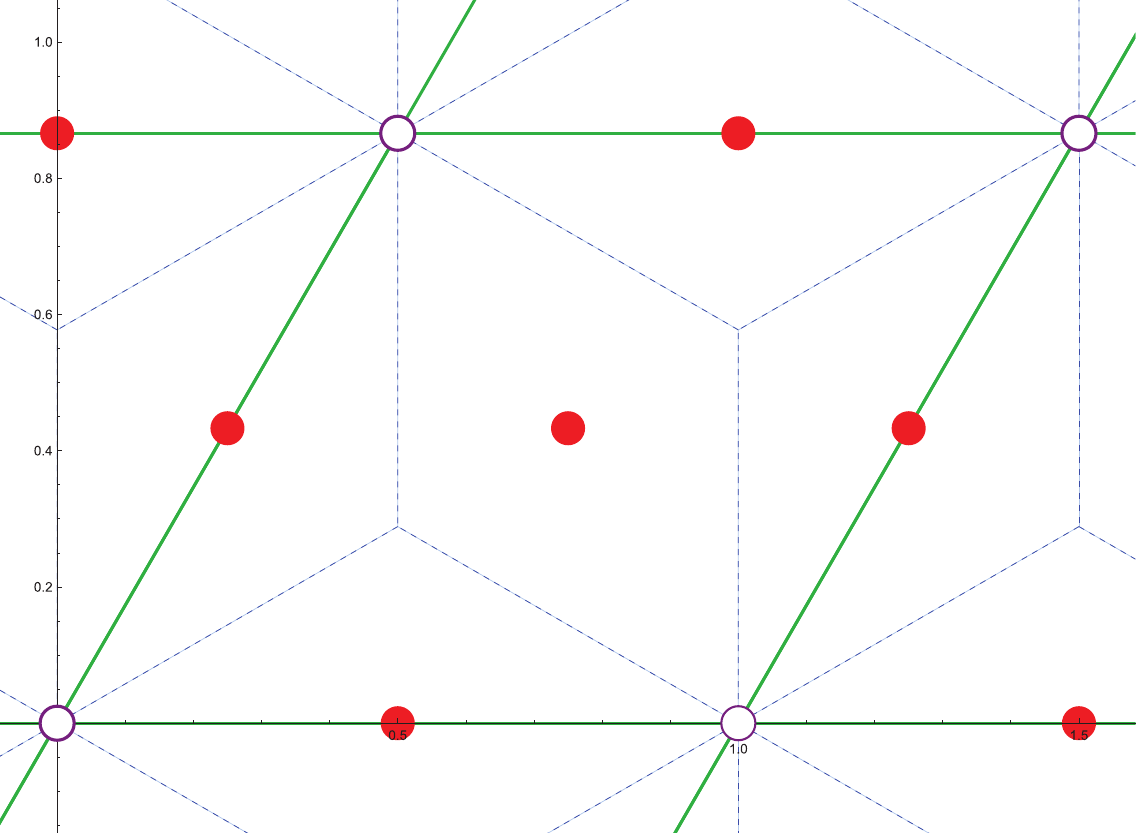}\quad
        \includegraphics[width=0.45\linewidth]{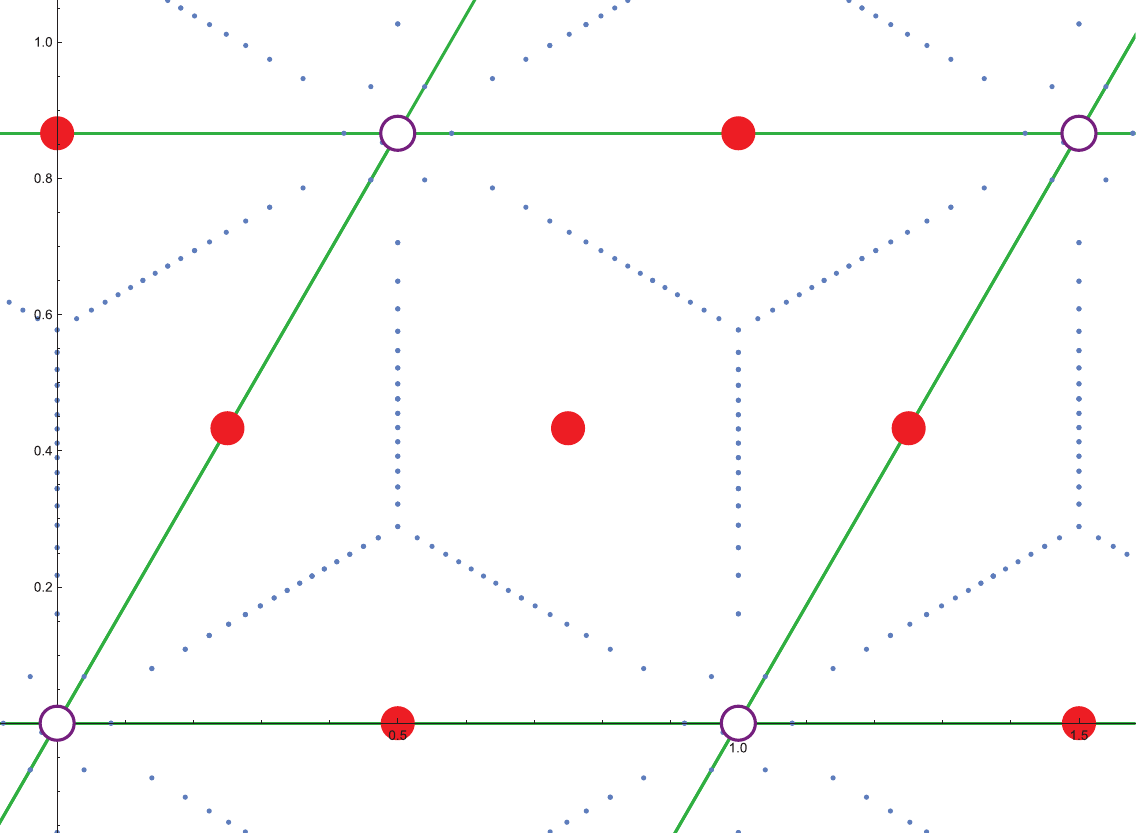}
        \caption{Let $\Lambda = \mathbb{Z} + e^{\frac{i \pi}{3}}\mathbb{Z}$. Left: The red dots are the points of $\mathcal{PPL}(\omega, f)$, where $\omega = \wp_z(z)dz$ and $f(z) = \wp_z(z)$, which determine the Voronoi diagram in blue dashed lines. The green lines cut out fundamental domains of the action of $\Lambda$ on $\mathbb{C}$. The white circles in the corners  are the poles of $\omega$. Right: The dark blue dots are the zeros of $T_{\omega}^{25}(f)$ in $\mathbb{C}$.}
        \label{fig:torus2}
    \end{figure}
\end{center}

\medskip
\noindent {\bf Geometric reconstruction of the translation surface.}

In what follows, we show how asymptotic root-counting provides a way to construct explicitly the translation surface $(X,\wp_z(z)dz)$ by gluing Euclidean polygons.
\par
We have deliberately chosen $f(z)=\wp_z(z)$ to have its poles at the poles of $\omega$ so that the principal polar locus $\mathcal{PPL}(T_\omega, f)$ depends only on $\omega$ (and hence only on the translation structure).
\par
The Voronoi diagram admits a dual graph whose edges, connecting the zeros of $\omega$
(the conical singularities of the flat metric $|\omega|$), are realized by saddle connections, thereby forming the Delaunay decomposition of the translation surface.
\par
In Figure~\ref{fig:placeholder}, these Delaunay edges decompose a fundamental parallelogram into three pieces: two triangles called $A,B$ (drawn between the three zeros of $\omega$) and a hexagon called $C$ (containing the triple pole of $\omega$).
\par
The lengths and slopes of the sides of these polygons in the flat coordinates can be explicitly computed by integrating $\omega$ along the corresponding homology classes. We obtain that $A$ and $B$ are equilateral triangles while $C$ is the double cover of the complement of an equilateral triangle in the infinite flat plane (the six interior angles of the hexagon are equal to $\frac{5\pi}{3}$). The sides of these polygons are then glued according to the pattern prescribed by the dual graph of the Voronoi diagram. In particular, the three zeros correspond to conical singularities of angle $4\pi$. Our flat surface is now constructed!

\begin{center}
\begin{figure}[h]
    \centering
    \includegraphics[width=0.75\linewidth]{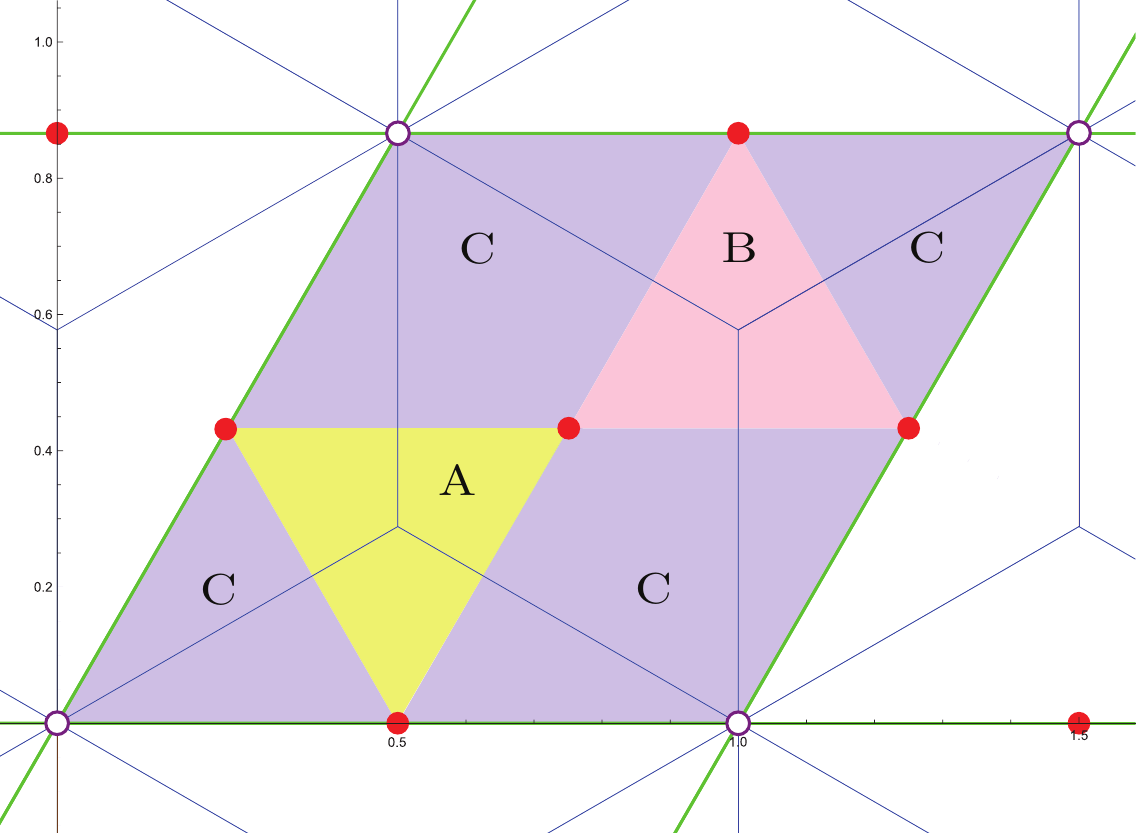}
    \caption{In a fundamental parallelogram for Example 6.2 (2), we draw the Delaunay cells dual to the Voronoi diagram (blue line) given in Figure \ref{fig:torus2}. Observe that $(X,\omega)$ decomposes into two equilateral triangle faces $A$ and $B$ and one hexagonal face $C$ centered at the pole of $\omega$. }
    \label{fig:placeholder}
\end{figure}
\end{center}

\subsection{Organisation of the paper}

\begin{itemize}
    \item In Section~\ref{sec:Translation}, we provide background on translation surfaces. 
    \item In Section~\ref{sec:Geometry}, we introduce the principal polar locus and give the analytic construction of Voronoi diagrams in translation surfaces.
    \item In Section~\ref{sec:ZeroFree}, we characterise asymptotically zero-free regions in terms of Voronoi diagrams.
    \item In Section~\ref{sec:Edges}, we describe the asymptotic distribution of zeros of $T_{\omega}^{n} f$ on Voronoi edges and prove Theorem~\ref{thm:MAIN}.
    \item In Section~\ref{sec:algfunc}, we generalise previous results of Prather-Shaw \cite{PrSh2} on sequences of derivatives of algebraic functions using the approach of the present paper. 
    \item In Section~\ref{sec:Outlook}, we outline several areas of application for the obtained  results, as well as potential generalisations.
\end{itemize}

\medskip \noindent
\emph {Acknowledgements.} The second author wants to acknowledge the financial support of his research provided by the Swedish Research Council grant 2021-04900. He is sincerely grateful to Beijing Institute for Mathematical Sciences and Applications for the hospitality in Fall 2023 where the major part of this research has been carried out. Research by the third author is supported by the Beijing Natural Science Foundation (Grant IS23005) and the French National Research Agency under the project TIGerS (ANR-24-CE40-3604). The authors want to thank Vincent Delecroix, Alexandre Eremenko, Yi Huang, Pavel Kurasov, Aud Lundholm, Curtis McMullen, Dmitry Novikov, Anton Zorich and the anonymous referees for their interest, valuable remarks and discussions.

\section{Primer on translation surfaces}\label{sec:Translation}

\subsection{Translation structures}\label{sub:translation}

Any non-zero meromorphic $1$-form $\omega$ on a (possibly open) Riemann surface $X$ defines on $X$ a geometric structure called \textcolor{blue}{\textit{translational}}. Denote by
\begin{itemize}
    \item $X^{\ast}$ the surface $X$ punctured at the poles of $\omega$;
    \item $X^{\ast\ast}$ the surface $X$ punctured at the zeros and the poles of $\omega$.
\end{itemize}
Local primitives of $\omega$ are locally injective on $X^{\ast\ast}$. They form an atlas of local biholomorphisms from $X^{\ast\ast}$ to $\mathbb{C}$. Additionally, since any two local primitives of the same differential $1$-form $\omega$ differ by a constant, transition maps between two distinct charts of the atlas are \textcolor{blue}{\textit{translations}} of the complex plane. Therefore, we say that $\omega$ endows the Riemann surface $X^{\ast\ast}$ with a \textcolor{blue}{\textit{translation structure}}. The pair $(X,\omega)$ is called a \textcolor{blue}{\textit{translation surface}}.
\par
As a geometric structure, a translation structure can be presented as a pair $(\text{dev},\chi)$ where $\text{dev}$ (for developing map) is a multivalued primitive on $X^{\ast\ast}$ (or equivalently a primitive of the lift $\omega$ to the universal cover of $X^{\ast\ast}$) while $\chi$ is a holonomy representation of $\pi^{1}(X^{\ast\ast})$ where every closed loop $\gamma$ in $X^{\ast\ast}$ is mapped to the translation of vector $\int_{\gamma} \omega$. Indeed, two primitives of $\omega$ defined by analytic continuation along two distinct paths differ by the period of $\omega$ on the loop formed the concatenation of the two paths.
\par
In the complement $X^{\ast\ast}$ of the zeros and the poles of $\omega$, charts provide a local isomorphism with the standard complex plane $(\mathbb{C},dz)$. In the following sections, we introduce local models for neighbourhoods of zeros and poles of a meromorphic $1$-form.

\begin{figure}
    \centering
    \includegraphics[width=0.85\linewidth]{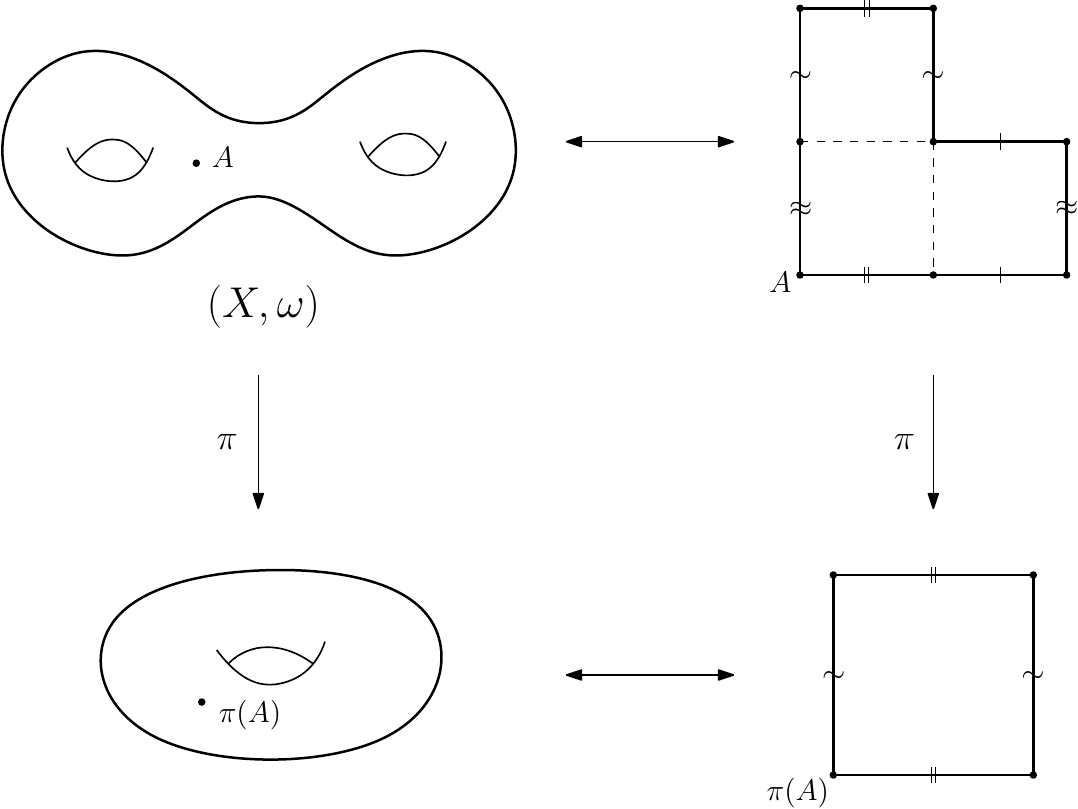}
    \caption{Top: The L-shaped translation surface $(X,\omega)$, a genus-two surface with a double zero $A$ and its flat model constructed by gluing three unit squares together (vertex $A$ is a conical singularity of angle $6\pi$).\\ Bottom: Projection $\pi$ from $X$ to the square torus with a unique ramification point.}
    \label{fig:LShaped}
\end{figure}

\subsection{Local models of zeros}\label{sub:LocalModelsZeros}

At a zero of order $k$, a meromorphic $1$-form $\omega$ locally writes as $z^{k}dz$ for some local coordinate $z$. Then, $\omega$ locally writes as $d\phi$ where $\phi=\frac{1}{k+1}z^{k+1}$. As $\phi$ locally defines a cover of degree $k+1$ the disk ramified over $0$, a local model for a punctured neighbourhood of zero of order $k$ is a cyclic cover of degree $k+1$ of the punctured disk.
\par
A classical example is the L-shaped surface $(X,\omega)$, defined as a cover $\pi$ of degree $3$ of the standard square torus $(\mathbb{C}/\Gamma,dz)$ where $\Gamma=\mathbb{Z}+i\mathbb{Z}$. The cover $\pi$ has a unique ramification point of order $3$ so $\omega$ has a double zero whose local model is a cyclic cover of degree $3$ of the punctured disk, see Figure~\ref{fig:LShaped}.

\subsection{Geodesics and saddle connections}\label{sub:GeodesicSaddle}

Since translations of the plane map straight lines to straight lines of the same slope, translation surfaces admit a well-defined notion of straight line, which we will call (unparameterized) \textcolor{blue}{geodesics}. Such arcs in $X^{\ast\ast}$ are mapped to straight segments in the charts of the translation atlas. Moreover, these geodesics carry a well-defined slope in $\mathbb{RP}^{1}$.
\par
A remarkable consequence of the fact that geodesics on translation surfaces preserve a constant slope is that two geodesics of the same slope cannot intersect. For the same reason, no geodesic can intersect itself. Therefore, for each slope $\theta \in \mathbb{RP}^{1}$, $X^{\ast\ast}$ admits a directional foliation, whose dynamical properties have been the subject of extensive study (see, for instance, \cite{Zor}).

\begin{definition}
In a translation surface $(X,\omega)$, we call \textcolor{blue}{\textit{saddle connection}} an arc $\gamma:[0,1]\to X^{\ast}$ such that:
\begin{itemize}
    \item arc $\gamma(]0,1[)$ is a geodesic of $X^{\ast\ast}$ (in particular it does not contain any zero of $\omega$);
    \item endpoints $\gamma(0)$ and $\gamma(1)$ are zeros of $\omega$ (possibly the same). 
\end{itemize}
\end{definition}

Saddle connections, such as those shown dotted in Figure~\ref{fig:LShaped}, provide a convenient way to decompose a translation surface into simpler (polygonal) pieces.
\par
For a saddle connection $\gamma$, the period $\int_{\gamma} \omega$ has a straightforward geometric interpretation: its argument is the slope of $\gamma$ as an oriented geodesic while its modulus is its length with respect to the singular flat metric $|\omega|$ as explained in Section~\ref{sub:FlatMetric}.
\par
The period of a saddle connection depends only on its relative homology class in the homology group $H_{1}(X^{\ast},Z_{\omega})$ of the surface punctured at the poles relatively to the zeros.
\par
Understanding which relative homology classes in $H_{1}(X^{\ast},Z_{\omega})$ are actually realized by the saddle connections of $(X,\omega)$ is a delicate problem, closely related to the difficulty of explicitly constructing a translation surface by gluing polygons along saddle connections.

\subsection{Local models of simple poles}\label{sub:LocalModelsSimplePoles}

At a simple pole, a meromorphic $1$-form $\omega$ locally writes as $\frac{\lambda dz}{z}$ where $\lambda \in \mathbb{C}^{\ast}$ (up to a biholomorphic change of variable). Local primitives of $\omega$ are of the form $\lambda \ln(z)$. 
\par
In a \textcolor{blue}{\textit{translation cylinder}} constructed by the quotient $\mathbb{C}/2i\pi\lambda\mathbb{Z}$, the standard differential form $dz$ extends to the two ends as simple poles of residues $2i\pi\lambda$ and $-2i\pi\lambda$. Multivalued chart $\phi(z)= \lambda \ln(z)$ identifies a neighborhood of any simple pole with an end of such a translation cylinder.
\par
A (punctured) neighborhood of such a simple pole is then foliated by a one-parameter family of closed geodesics of period $2i\pi\lambda$. As they have same period, these closed geodesics are parallel in the flat charts.

\subsection{Local models of poles of higher order}\label{sub:LocalModelsPoles}

A systematic construction of local models of poles of order $k \geq 2$ is given in \cite{Boissy}; we present them here without proof. First note that up to biholomorphic change of variable, at a pole of order $k \geq 2$, a meromorphic $1$-form writes as $\left(\frac{1}{z^{k}}+\frac{\lambda}{z} \right)dz$ for some $\lambda \in \mathbb{C}$ (see for example Proposition~3.1 in \cite{BCGGM}).
\par
We first discuss the case $k=2$. At a double pole with zero residue $(\lambda=0)$, the translation structure is isomorphic to the point at infinity in $\mathbb{CP}^{1}$ compactification of the usual flat plane $(\mathbb{C},dz)$.
\par
To obtain a local model for a double pole with a non-zero residue, we proceed as follows. From the infinite flat plane, we remove a semi-infinite rectangle defined by the inequalities $\Re(z) \geq 0$ and $0 \leq \Im(z) \leq 2i\pi$. Identifying the two boundary half-lines produced by translation $z \mapsto z+2i\pi$, the singularity produced at infinity is a double pole with a residue equal to $2i\pi$ ($\lambda =1$). Multiplying the differential by some complex factor (this amounts to scale and rotate the semi-infinite rectangle), we can obtain every possible non-zero value for $\lambda$.
\par
Any pole of order $k \geq 3$ and residue $2i\pi\lambda$ can be constructed by a cover of degree $k-1$ ramified over a double pole of residue $\frac{2i\pi\lambda}{k-1}$. In particular, a pole of order $k$ with zero residue can be identified with the point at infinity in an infinite cone of angle $(2k-2)\pi$ (in other words, a cover of degree $k-1$ of the flat plane ramified over one point).

\subsection{Flat metric}\label{sub:FlatMetric}

Just as translations preserve straight lines and slopes, they also preserve the length of segments. Hence, a translation structure determines a metric.
\par
On a Riemann surface $X$, the differential $\omega$ induces a flat metric $|\omega|$ in the following way: for any point $z_{0} \in X^{\ast\ast}$, the distance between $z_{0}$ and any other point $z$ in a small enough neighbourhood of $z_{0}$ is defined as 
$\lvert \int\limits_{z_{0}}^{z} \omega \rvert$. For example, the standard differential $dz$ endows the complex plane $\mathbb{C}$ with the standard Euclidean metric $|dz|$. Any chart $\phi$ of the translation atlas, i.e. a local primitive of $\omega$, conjugates the standard differential $dz$ in the complex plane with a differential $\omega$ defined on $X$. In other words, we have $\phi^{\ast}dz = \omega$. A translation surface is formed by pieces of the standard complex plane $\mathbb{C}$ glued together with the help of translations which are in particular isometries.
\par
Flat metric $|\omega|$ defined on the punctured surface $X^{\ast\ast}$ extends naturally to zeros of $\omega$ in the following way. As explained in Section~\ref{sub:LocalModelsZeros}, a punctured neighborhood of a zero of order $k$ of $\omega$ is locally isomorphic to a cyclic cover of degree $k+1$ of the punctured disk. It follows that the flat metric $|\omega|$ extends to such a zero as a conical singularity of angle $2(k+1)\pi$. Therefore the punctured surface $X^{\ast}$ is an Euclidean surface with conical singularities, see \cite{Tr}. Figure~\ref{fig:LShaped} provides an explicit example of a flat surface with a conical singularity of angle $6\pi$ corresponding to a double zero of the differential.

\subsection{Translation surfaces are metrically complete}

Remarkably, as soon as a meromorphic differential defined on a compact Riemann surface has poles, the metric structure it defines on the surface punctured at these poles is no longer compact. The following fact is well known (see \cite{Boissy} for a reference on the flat geometry of meromorphic $1$-forms)

\begin{lemma}\label{lem:complete}
Let $X$ be a compact Riemann surface with a non-zero meromorphic $1$-form $\omega$. Then the punctured surface $X^{\ast}$ is a complete metric space for the singular flat metric $|\omega|$.
\end{lemma}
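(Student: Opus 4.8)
The plan is to show completeness by verifying the Cauchy criterion near each puncture, i.e., to check that no Cauchy sequence in $(X^\ast, |\omega|)$ can ``escape to a pole''. Since $X$ is compact and the punctures form a finite set, and since away from the punctures the metric $|\omega|$ is locally isometric to a piece of $(\mathbb C, |dz|)$ (possibly with a conical singularity at a zero of $\omega$, which is harmless for completeness as the cone angle is finite), it suffices to exhibit, for every pole $p$ of $\omega$, a punctured neighbourhood $U_p \setminus \{p\}$ whose points are at infinite $|\omega|$-distance from any fixed basepoint. Then any Cauchy sequence is eventually bounded away from all punctures, hence contained in a compact subset of $X^\ast$, where completeness is automatic.

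First I would reduce to a local computation: choose a coordinate $z$ centred at a pole $p$ of order $d_p \ge 1$, so that $\omega = (c\, z^{-d_p} + \text{higher order})\,dz$ with $c \neq 0$; shrinking the neighbourhood, we may assume $|\omega| \asymp |z|^{-d_p}|dz|$ on $0 < |z| \le \varepsilon$. Next I would estimate the $|\omega|$-length of a radial path from a point at $|z| = r$ (small) out to $|z| = \varepsilon$: this length is comparable to $\int_r^\varepsilon t^{-d_p}\, dt$. For $d_p \ge 2$ this integral diverges as $r \to 0^+$; for $d_p = 1$ it behaves like $-\log r$, which also diverges. Hence in all cases the distance from $\{|z| = \varepsilon\}$ to the puncture is infinite, and more precisely the punctured disc $\{0 < |z| \le \varepsilon\}$ contains points arbitrarily far (in the $|\omega|$-metric) from its boundary circle. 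I should also note that the distance function is computed using all paths in $X^\ast$, not just radial ones in the chart, but since any such path must traverse the annulus $\{r \le |z| \le \varepsilon\}$ to leave a neighbourhood of $p$, the same integral gives a lower bound on its length; this makes the argument robust.

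With this in hand the completeness argument is short: let $(x_n)$ be a Cauchy sequence in $(X^\ast, |\omega|)$. It is bounded, so by the length estimate above it stays at bounded distance from a fixed basepoint, hence (again by the estimate, which forces points near a puncture to have large distance) there is $\delta > 0$ with $x_n$ avoiding the $\delta$-neighbourhood of every pole for all large $n$. The complement of these neighbourhoods in $X^\ast$ is a closed subset of the compact surface $X$ not meeting the poles, hence compact; a Cauchy sequence in a compact metric space converges, so $(x_n)$ converges in $X^\ast$.

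The main obstacle is the case of a simple pole ($d_p = 1$), where the naive length integral only diverges logarithmically, so one must be a little careful that this is genuinely infinite distance and not merely a slowly-growing finite quantity — but $\int_r^\varepsilon t^{-1}\,dt = \log(\varepsilon/r) \to \infty$ settles it. A secondary technical point is handling the lower-order terms in the Laurent expansion of $\omega$ and verifying that a shrinking of the neighbourhood makes the comparison $|\omega| \asymp |z|^{-d_p}|dz|$ uniform; and one must make sure that distances are measured by infima over paths in $X^\ast$, not in the local chart, which is dealt with by the annulus-traversal remark above. None of these is serious, so the result follows.
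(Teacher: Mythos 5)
Your proof is correct and takes a somewhat different, more hands-on route than the paper's. The paper normalises $\omega$ near a pole $z_\infty$ of order $k$ to one of the standard local models $\left(\frac{\lambda}{z}+\frac{1}{z^k}\right)dz$ (or $\frac{\lambda\,dz}{z}$ if $k=1$), introduces the explicit primitive $\phi$, and argues that a sequence which is Cauchy for $|\omega|$ has $\phi$-image Cauchy in $\mathbb{C}$; since $|\phi|\to\infty$ as $z\to z_\infty$, the sequence is eventually bounded away from the pole. Your argument instead proves the same underlying fact---that each pole of $\omega$ lies at infinite flat distance---by an explicit length estimate $\int_r^\varepsilon t^{-d_p}\,dt\to\infty$ as $r\to 0^+$, together with the observation that any path escaping a small punctured neighbourhood of the pole must cross the whole annulus, which gives a lower bound on its length (this is the point you are right to flag, and it is handled by $\left|\tfrac{d}{dt}|\gamma(t)|\right|\le|\gamma'(t)|$ plus the triangle inequality for integrals). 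What your version buys is explicitness and a clean avoidance of the slightly delicate implicit step in the paper's proof that Cauchy-ness of $(z_n)$ in $(X^\ast,|\omega|)$ transfers to Cauchy-ness of $(\phi(z_n))$ in $\mathbb{C}$ (a priori the infimum over paths in $X^\ast$ need not be realised inside the chart); what the paper's version buys is brevity and consistency with the translation-surface viewpoint of reading geometry off the flat chart rather than integrating. Both handle the borderline logarithmic case $d_p=1$ correctly, and your concluding compactness argument---a Cauchy sequence eventually lies in a compact subset of $X^\ast$---is sound.
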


\begin{proof}
For any pair of points $x,y \in X^{\ast}$, $d(x,y)$ is the distance between $x,y$ for the singular flat metric $|\omega|$. We have to prove that any Cauchy sequence $(z_{n})_{n \in \mathbb{N}}$ of points in $X^{\ast}$ converges to some limit point in $X^{\ast}$. Since $X^{\ast}$ is locally compact and $\omega$ has finitely many poles, the question reduces to the only case when  $(z_{n})_{n \in \mathbb{N}}$ converges in $X$ to some point $z_{\infty}$ which is a pole of $\omega$. We will prove that no such sequence can be a Cauchy sequence in the flat metric $|\omega|$.
\par
We first assume that $z_{\infty}$ is a simple pole of $\omega$. Then, in a neighbourhood $V$ of $z_{\infty}$, up to biholomorphic local change of variable, we can assume that $z_{\infty}=0$ and $\omega=\frac{\lambda dz}{z}$ for some $\lambda \in \mathbb{C}^{\ast}$, see Section~\ref{sub:LocalModelsSimplePoles}. Up to a choice of a subsequence of $(z_{n})_{n \in \mathbb{N}}$, we assume that for any $n \in \mathbb{N}$, $z_{n} \in V \setminus \lbrace{ 0 \rbrace}$. Then for any path $\gamma$ joining two points $z_{n},z_{m}$ in the sequence, we have $\vert \int\limits_{\gamma} \omega \vert \geq |\lambda|\cdot \vert \ln(|z_{m}|/|z_{n}|) \vert$. It follows that for a fixed $m$, $d(z_{n},z_{m})$ can be made arbitrarily large as $n \to + \infty$. In other words, $(z_{n})_{n \in \mathbb{N}}$ cannot be a Cauchy sequence.
\par
Now we consider the case where $z_{\infty}$ is a pole of order $k \geq 2$ of $\omega$. Up to a biholomorphic local change of variable in a neighbourhood $V$ of $z_{\infty}$, we can assume that $z_{\infty}=0$ and normalise $\omega$ as $( \frac{\lambda}{z} + \frac{1}{z^{k}})dz$ for some $\lambda \in \mathbb{C}$, see Section~\ref{sub:LocalModelsPoles}.
\par
Then, $\omega$ admits a (multi-valued) local primitive $\phi: z \mapsto \lambda \ln (z) - \frac{1}{(k-1)}z^{1-k}$. We deduce that for a simple path $\gamma$ joining a pair of points $z_{n},z_{m}$ in the sequence such that $|z_{m}|>|z_{n}|$, we have 
$$\vert \int\limits_{\gamma} \omega \vert \geq
\frac{|z_{n}|^{1-k} - |z_{m}|^{1-k}}{k-1}- |\lambda|\cdot \vert \ln(|z_{m}|/|z_{n}|) \vert - 2\pi |\lambda|.$$
The term $2\pi |\lambda|$ takes into account the fact that there are two homotopy classes of simple paths joining $z_{n}$ to $z_{m}$ in $V$ and the corresponding periods differ by the residue $2i\pi\lambda$ of $\omega$ at the pole.
\par
Since $|z_{n}|^{1-k}$ grows much faster than $|\ln(|z_{n}|)|$ as $z_{n} \to 0$, we deduce that for a fixed $m$, $d(z_{n},z_{m})$ can be made arbitrarily large as $n \to + \infty$. The sequence $(z_{n})_{n \in \mathbb{N}}$ is not a Cauchy sequence.
\end{proof}

\section{Principal polar locus and Voronoi functions}\label{sec:Geometry}

\subsection{Growth of the order of poles under iterations of $T_{\omega}$}\label{sub:Growth}

Notice that unless a function $f$ is very special, one expects it to develop poles at the zeros of $\omega$ under iterative applications of the operator $T_{\omega}$. The following lemma describes this phenomenon explicitly.

\begin{lemma}\label{lem:criticalfactorisation}
Consider a non-zero meromorphic $1$-form $\omega$ and a meromorphic function $f$ on a Riemann surface $X$. Then for any point $z_{0} \in X$ which is not a pole of $\omega$ the following statements are equivalent:
\begin{itemize}
    \item no function in the sequence $(f_{n})_{n\in \mathbb{N}}$  has a pole at $z_{0}$;
    \item $f$ is locally factorised at $z_{0}$ by a primitive of $\omega$ (see Definition~\ref{defn:localfactor}).
\end{itemize}
\end{lemma}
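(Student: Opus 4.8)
The plan is to work in a local coordinate near $z_0$ and to reduce the statement to an elementary computation with Laurent expansions. Since $z_0$ is not a pole of $\omega$, choose a local coordinate $w$ centred at $z_0$ in which $\omega = w^{a}\,h(w)\,dw$ with $h$ holomorphic and nonvanishing and $a = a_{z_0} \geq 0$ the order of vanishing of $\omega$ at $z_0$ (so $a=0$ when $z_0$ is a regular point of $\omega$). Write $f = \sum_{k \geq k_0} c_k w^{k}$ as a Laurent series. Applying $T_\omega$ once replaces $f$ by $\frac{df/dw}{w^{a}h(w)}$, so each application divides by $w^{a}$ (up to a holomorphic nonvanishing factor) after differentiating. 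The key bookkeeping is the order of the pole, if any, of $f_n$ at $z_0$, and the goal is to show that this order either stabilises to something negative and strictly increasing (the generic case) or stays $\geq 0$ for all $n$, and that the latter happens exactly when $f$ is locally factorised by a primitive of $\omega$.

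The implication ``locally factorised $\Rightarrow$ no $f_n$ has a pole at $z_0$'' is the easy direction: if $f = g \circ \phi$ with $\omega = d\phi$ and $\phi, g$ holomorphic near $z_0$ and $\phi(z_0)$ respectively, then by the chain rule $T_\omega(f) = \frac{df}{d\phi} = g'\circ\phi$, which is again of the same form, so by induction $f_n = g^{(n)}\circ\phi$ is holomorphic near $z_0$ for every $n$. (One should note here that $\phi$ being a primitive of $\omega$ with no pole at $z_0$ forces $a = a_{z_0} = 0$, i.e. $z_0$ is a regular point of $\omega$ or the condition is vacuous; this matches the way Definition~\ref{defn:localfactor} and Definition~\ref{defn:PPL} are set up.) For the converse I would argue contrapositively: assuming $f$ is \emph{not} locally factorised by a primitive of $\omega$ at $z_0$, produce an $n$ such that $f_n$ has a pole at $z_0$.

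The heart of the matter is therefore this converse, and the main obstacle is handling the case $a \geq 1$ (a zero of $\omega$), together with the subtlety that ``$f$ holomorphic at $z_0$'' is \emph{not} the same as ``$f$ locally factorised.'' The clean way is to introduce the local developing coordinate $\zeta = \Phi(w) := \int_{z_0}^{w}\omega$. When $a = 0$, $\Phi$ is a biholomorphism near $z_0$, $f$ is automatically of the form $g\circ\Phi$ with $g = f\circ\Phi^{-1}$, and one checks $g$ is holomorphic at $\Phi(z_0)$ iff $f$ is holomorphic at $z_0$; so in this case ``not locally factorised'' means $f$ itself has a pole at $z_0$, and then already $f_0 = f$ does the job. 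When $a \geq 1$, $\Phi$ has a critical point of order $a+1$ at $z_0$, i.e. $\Phi(w) = \psi(w)^{a+1}$ with $\psi$ a local coordinate; the assertion ``$f$ locally factorised'' becomes ``$f$, as a function of $\psi$, is actually a holomorphic function of $\psi^{a+1}$'' — equivalently its Laurent series in $\psi$ contains only exponents divisible by $a+1$ and all such exponents are $\geq 0$. I would then compute directly that each application of $T_\omega$ shifts the set of exponents appearing in the $\psi$-expansion: differentiation lowers exponents by $1$ and division by $w^a \sim \psi^a \cdot(\text{unit})$ lowers them by a further $a$, so one application of $T_\omega$ sends the exponent $m$ (with its obstruction to divisibility by $a+1$) to $m-(a+1)$ roughly, except that the term with $m$ a multiple of $(a+1)$ may be killed by the derivative while the others are not. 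The upshot is that if some exponent $m$ with $m \not\equiv 0 \pmod{a+1}$ or with $m < 0$ appears in the expansion of $f$, then after finitely many steps the smallest such exponent becomes negative and \emph{cannot} be cancelled (because the cancellation from differentiation only affects exponents $\equiv 0$), so $f_n$ acquires a genuine pole; conversely absence of such exponents is exactly the factorisation condition and gives the easy direction above. Assembling these cases — and being careful that the holomorphic unit $h(w)$ and the coordinate change $\psi$ do not mix exponent classes modulo $a+1$, which holds because $h$ is a unit and $\psi$ is a coordinate tangent to $w$ — completes the proof; the bookkeeping of exponents modulo $a+1$ under repeated $T_\omega$ is the step I expect to require the most care.
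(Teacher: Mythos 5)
Your strategy---normalize the coordinate so that the exponent arithmetic modulo $a+1$ becomes visible in a power series---is exactly the paper's, and the core bookkeeping you sketch (each application of $T_\omega$ shifts exponents down by $a+1$, kills only the constant term, so the smallest exponent not divisible by $a+1$ must eventually become negative and cannot cancel) is the right argument. Two of your side remarks, however, are incorrect and, taken at face value, would derail the write-up.

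The parenthetical claim that being locally factorised "forces $a = a_{z_0} = 0$" is false and misreads the definition you cite: a local primitive $\phi$ of $\omega$ is holomorphic near $z_0$ as soon as $z_0$ is not a pole of $\omega$, whether or not $\omega$ vanishes there. With $\omega = z\,dz$ and $\phi = z^2/2$, the function $f = z^2$ \emph{is} locally factorised (take $g(t) = 2t$) while $f = z$ is not, yet $a = 1$ in both cases; declaring the $a \geq 1$ situation vacuous would empty the lemma of content precisely where it matters. Second, your assertion that the unit $h(w)$ and the tangent coordinate $\psi$ "do not mix exponent classes mod $a+1$" is also false (already $\psi = w + w^2$ with $a+1 = 2$ mixes parities), but the concern is moot: $\psi$, defined so that $\Phi = \psi^{a+1}$, \emph{is} the paper's normalizing coordinate, and in it $\omega = d(\psi^{a+1}) = (a+1)\psi^a\,d\psi$ exactly, with no residual unit. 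The analysis should be carried out entirely in the $\psi$-expansion of $f$ and never refer back to $w$; then $T_\omega$ sends $\sum_k c_k\psi^k$ to $\frac{1}{a+1}\sum_k k\,c_k\,\psi^{k-(a+1)}$, visibly preserves residues mod $a+1$, and the induction closes exactly as in the paper.
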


\begin{proof}
Up to local biholomorphic change of variable $z$, we can assume that $z_{0}=0$ and that $\omega=z^{m}dz$ for some $m \in \mathbb{N}$. An arbitrary locally defined holomorphic function $f$ can be written as a power series in  $z$, i.e.,  $$f(z)= \sum\limits_{k=0}^{+\infty} a_{k}z^{k}.$$
Then $T_{\omega} (f)$ is given by a Laurent series of the form:
$$T_{\omega} (f) = \sum\limits_{k=0}^{+\infty} a_{k}kz^{k-m-1}.$$
It follows immediately that if no function in the sequence $(f_n)_{n\in N}$ has a pole at $0$, then $a_{k}=0$ for all $k \notin (m+1)\mathbb{Z}$.\newline
Introducing the local primitive $\phi=\frac{z^{m+1}}{m+1}$ of $\omega$, we obtain
$$f(z)= \sum\limits_{k=0}^{+\infty} a_{k(m+1)}(m+1)^{k}\phi^{k}.$$
Therefore, in a neighbourhood of $0$, $f$ factorises as $g \circ \phi$ where $g$ is a holomorphic function  given by the above series and well-defined near $0$.
\par
Conversely, for any meromorphic function $f$ which locally factorises as $g \circ \phi$, direct computation proves that for any $k$, we have:
$$
T_{\omega}^{k} (f) = g^{(k)} \circ \phi.
$$
Therefore no iterate $T_{\omega}^{k} (f)$ of $f$ has a pole at $z_{0}$.
\end{proof}

The following statement establishes a dichotomy between the points belonging to the principal polar locus $\mathcal{PPL}(\omega,f)$ and the points outside it (see Definition~\ref{defn:PPL}). On the principal polar locus, the function  $T^k_{\omega}(f)$ has poles whose orders grow linearly with $k$ while the sum of orders of the poles outside $\mathcal{PPL}(\omega,f)$ remains bounded. In particular, we obtain that when the sum of orders of poles (and thus the sum of orders of zeros) grows to infinity when $k\to +\infty$, the principal polar locus must be non-empty. We prove that the sum of orders of poles has linear growth and provide a formula for the leading coefficient of the latter linear dependence.

\begin{proposition}\label{prop:growth}
Consider a non-zero meromorphic $1$-form $\omega$ and a meromorphic function $f$ on a compact Riemann surface $X$. Then there is an integer $M>0$ such that:
\begin{itemize}
    \item for any $k>M$ and any point $p\in \mathcal{PPL}(\omega,f)$ that is a zero of order $a_{p}>0$ of $\omega$, $p$ is a pole of $T_{\omega}^{k}(f)$ of order $\alpha_{p}+k(a_{p}+1)>0$, where $\alpha_{p}$ is some constant;
    \item for any $k>M$ and any point $p\in \mathcal{PPL}(\omega,f)$ that is a regular point of $\omega$, $p$ is a pole of $T_{\omega}^{k}(f)$ of order $\alpha_{p}+k$, where $\alpha_{p}$ is some constant;
    \item for any $k > M$, the sum of orders of the poles of $T_{\omega}^{k}(f)$ outside $\mathcal{PPL}(\omega,f)$ is constant. All these poles are
    located at the simple poles of $\omega$.
\end{itemize}
\end{proposition}

\begin{proof}
Iterating $T_{\omega}$ we get that for any $k \in \mathbb{N}$, any pole of $T_{\omega}^{k}(f)$ is either a pole of $f$ or a zero of $\omega$. Since $X$ is compact, we have only finitely many such points to examine.\newline
Let us first consider the case of a point $p$ which is a pole of $f$ of order $m$ and a pole of $\omega$ of order $s_{p}$. Direct computation shows that the function $T_\omega(f)$ has at $p$ a singularity of order $-m +s_{p}-1$. Therefore, if $p$ is a simple pole of $\omega$, it still remains a pole  of $T_{\omega}^{k}(f)$ of order $m$ for any $k \in \mathbb{N}$. On the other hand, if $p$ is a pole of $\omega$ of order at least two, then $T_{\omega}^{k}(f)$ is holomorphic at $p$ provided that $k$ is large enough. In both cases, $p$ does not belong to $\mathcal{PPL}(\omega,f)$.\newline
Now, let us consider the case when $p$ is a zero of $\omega$ and $f$ is locally factorised by a primitive of $\omega$. If $p$ is not a pole of $f$, then Lemma~\ref{lem:criticalfactorisation} proves that $p$ is not a pole of any function in the sequence $(f_n)_{n\in \mathbb{N}}$. Thus, we have shown that after finitely many steps, the sum of orders of the poles of $T_{\omega}^{k}(f)$ outside $\mathcal{PPL}(\omega,f)$ stabilises to the sum of  orders of the poles of $f$ coinciding with the simple poles of $\omega$.\newline
Lemma~\ref{lem:criticalfactorisation} implies that for any point $p$ of $\mathcal{PPL}(\omega,f)$,  there exists $k_{p} \in \mathbb{N}$ such that $T_{\omega}^{k_{p}} (f)$ has a pole at $p$. Then direct computation proves that the order of the pole increases by $a_{p}+1$ with each application of $T_{\omega}$ which finishes the proof.
\end{proof}

\begin{corollary}\label{cor:growthcoefficient}
Consider a non-zero meromorphic $1$-form $\omega$ and a meromorphic function $f$ on a compact Riemann surface $X$ such that $\mathcal{PPL}(\omega,f) \neq \emptyset$. For any $z \in \mathcal{PPL}(\omega,f)$, let $a_{z} \geq 0$ be the order of $\omega$ at $z$ (poles of $\omega$ do not belong to the principal polar locus).
\par
Then the sum $\mathcal{Z}_{n}$ of orders  of the poles of the meromorphic function $T_{\omega}^{n}(f)$ (and therefore the sum of orders of its zeros) grows when $n \to +\infty$ as $\mathcal{Z}_{n} \sim A\cdot n$  where
$$
A = \sum\limits_{z \in \mathcal{PPL}(\omega,f)} (a_{z}+1).
$$
\end{corollary}

\subsection{Voronoi functions}\label{sub:VorFunctions}

For a meromorphic function $f$ in $\mathbb{C}$, the Voronoi diagram of its set of poles can be defined in terms of maximal embedded disks  disjoint from these poles. \textcolor{blue}{\textit{Voronoi functions}} play a similar role on translation surfaces.

\begin{definition}\label{defn:VorFunc}
Consider a compact Riemann surface $X$  with a non-zero meromorphic $1$-form $\omega$ and a meromorphic function $f$ on $X$. Let $z \in X^{\ast} \setminus \mathcal{PPL}(\omega,f)$.
\par
Denote by $\phi$ a local primitive of $\omega$ such that  $\phi(z)=0$.  The \textcolor{blue}{\textit{Voronoi function}} $g_{z}$ is the holomorphic function defined in a neighbourhood of the origin in the complex plane $\mathbb{C}$ and satisfying $g_{z} \circ \phi =f$ (see Definitions~\ref{defn:localfactor} and~\ref{defn:PPL} for the existence and uniqueness of $g_{z}$ if $z$ is a zero of $\omega$).
\par
The \textcolor{blue}{\textit{critical radius} }$\rho(z)$ is the radius of convergence of $g_{z}$ at $0$. Immediate computation shows that for any $k \geq 1$, $g_{z}^{(k)} \circ \phi = T_{\omega}^{k}(f)$.
\end{definition}

Our next result shows that the behaviour of $g_{z}$ on the boundary of its disk of convergence at $0$ reflects the geometry of the translation surface $(X,\omega)$.

\begin{lemma}\label{lem:path}
Consider a compact Riemann surface $X$ with a non-zero meromorphic $1$-form $\omega$ and a meromorphic function $f$. Let $z \in X^{\ast} \setminus \mathcal{PPL}(\omega,f)$, and denote by $\tilde{z}$ some preimage of $z$ in the universal cover $\pi:\tilde{X^{\ast}}\rightarrow X^{\ast}$. Consider the primitive $\tilde{\phi}$ of $\pi^{\ast}\omega$ in $\tilde{X^{\ast}}$ such that $\tilde{\phi}(z)=0$.
\par
Then for any path $\sigma:[0,1] \rightarrow \mathbb{C}$ such that 
\begin{itemize}
    \item $\sigma(0)=0$;
    \item $|\sigma(s)|< \rho(z)$ for any $s \in [0,1[$;
    \item $|\sigma(1)|\leq \rho(z)$;
\end{itemize}
there exists a path $\tilde{\gamma}:[0,1] \rightarrow \tilde{X^{\ast}}$ such that  $\tilde{\phi} \circ \tilde{\gamma}=\sigma$. In addition one of the following two statements holds:
\begin{enumerate}

    \item either $\pi \circ \tilde{\gamma}(1) \notin \mathcal{PPL}(\omega,f)$ and $g_{z}$ extends to $\sigma(1)$ as a holomorphic function;
    
    \item or $\pi \circ \tilde{\gamma}(1) \in \mathcal{PPL}(\omega,f)$ and $g_{z}$ does not extends to $\sigma(1)$ as a holomorphic function. However, it extends in a neighbourhood of $\sigma(1)$ as a convergent Puiseux series.
\end{enumerate}
\end{lemma}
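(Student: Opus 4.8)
The plan is to proceed by analytic continuation along the path $\sigma$ in the target plane, lifting it through the translation charts. First I would construct the lifted path $\tilde\gamma$. Since $\tilde\phi$ is a primitive of $\pi^\ast\omega$ on the simply connected surface $\tilde{X^\ast}$, it is a local biholomorphism away from zeros of $\omega$; the standard path-lifting argument for immersions (or, more concretely, analytic continuation of the local inverse of $\tilde\phi$ along $\sigma$, using completeness of $X^\ast$ from Lemma~\ref{lem:complete} to rule out escape to a pole of $\omega$ in finite $|\omega|$-length) produces a unique $\tilde\gamma$ with $\tilde\phi\circ\tilde\gamma=\sigma$ for $s\in[0,1[$, and then extends to $s=1$. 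The point is that the $|\omega|$-length of $\tilde\gamma$ equals the Euclidean length of $\sigma|_{[0,s]}$, which stays bounded by $\rho(z)$, so by completeness the lift cannot leave $X^\ast$; if $\sigma$ passes through a zero of $\omega$ the lift still exists but $\tilde\phi$ ramifies there, which is exactly what forces the Puiseux phenomenon in case~(2).

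Next I would relate $g_z$ to this lift. Along the open path $s\in[0,1[$, the identity $g_z\circ\tilde\phi = f\circ\pi$ (valid near $\tilde z$ by Definition~\ref{defn:VorFunc}, and propagated by uniqueness of analytic continuation on the simply connected $\tilde{X^\ast}$) shows that $g_z$ restricted to the open arc $\sigma|_{[0,1[}$ agrees with $f\circ\pi\circ(\tilde\phi)^{-1}$, i.e. with a germ of a meromorphic function on $X$ read in the chart. The radius of convergence $\rho(z)$ is then characterised as the supremal radius for which this continuation stays holomorphic, so the behaviour at $\sigma(1)$ on the boundary circle is governed by whether the endpoint $\pi\circ\tilde\gamma(1)$ is a ``bad'' point for $f$, i.e. a pole of $f$ or a zero of $\omega$ at which $f$ fails to factorise — which is precisely membership in $\mathcal{PPL}(\omega,f)$.

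Then I would split into the two cases. If $\pi\circ\tilde\gamma(1)\notin\mathcal{PPL}(\omega,f)$: either the endpoint is a regular point of $\omega$ where $f$ is holomorphic, so $\tilde\phi$ is a local biholomorphism there and $g_z = f\circ\pi\circ(\tilde\phi)^{-1}$ extends holomorphically past $\sigma(1)$; or it is a zero of $\omega$ at which $f$ is locally factorised by a primitive of $\omega$, in which case by Definition~\ref{defn:PPL} / the computation in Lemma~\ref{lem:criticalfactorisation} there is a local holomorphic $g$ with $f=g\circ\phi_{\mathrm{loc}}$, and since $\phi_{\mathrm{loc}}$ and $\tilde\phi$ are both primitives of (the pullback of) $\omega$ near that point they differ by a constant, so again $g_z$ extends holomorphically to $\sigma(1)$. (A pole of $\omega$ cannot occur here, since that would give infinite $|\omega|$-distance and contradict $|\sigma(1)|\le\rho(z)<\infty$ by Lemma~\ref{lem:complete}; I should also note $\rho(z)>0$ so $\sigma(1)\ne 0$ is immaterial.) Conversely, if $g_z$ extended holomorphically to every boundary point the radius of convergence would exceed $\rho(z)$, so in case~(1) we genuinely have $|\sigma(1)|=\rho(z)$ realised by a holomorphic extension — consistent with the statement. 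If instead $\pi\circ\tilde\gamma(1)\in\mathcal{PPL}(\omega,f)$, the endpoint is either a pole of $f$, where $f\circ\pi\circ(\tilde\phi)^{-1}$ has a pole and $g_z$ cannot extend holomorphically; or a zero of $\omega$ of order $a\ge 1$ at which $f$ does not factorise, where $\tilde\phi$ is a degree-$(a+1)$ branched cover, so $(\tilde\phi)^{-1}$ is locally an $(a+1)$-valued Puiseux map in the coordinate $w-\sigma(1)$, and composing with the holomorphic (or meromorphic) $f\circ\pi$ yields a convergent Puiseux series that is not single-valued holomorphic — this is exactly alternative~(2). In both sub-cases a genuine singularity sits at $\sigma(1)$, forcing $|\sigma(1)|=\rho(z)$ as well.

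The main obstacle I expect is the careful verification that the lift $\tilde\gamma$ exists and is continuous all the way to $s=1$, including when $\sigma$ grazes or passes through (the image of) a zero of $\omega$: one must argue that a branch of $(\tilde\phi)^{-1}$ continues along $\sigma|_{[0,1[}$ without collision with a pole of $\omega$ (handled by the completeness Lemma~\ref{lem:complete} together with the length bound $\rho(z)<\infty$, which in turn needs $\rho(z)<\infty$ — true because $\mathcal{PPL}(\omega,f)\ne\emptyset$ or, more carefully, because some singularity must lie on the boundary circle, which is itself part of what we are proving, so one should instead argue directly that an everywhere-holomorphic continuation around the full circle would contradict maximality of $\rho(z)$) and that the endpoint lift is well-defined even at a branch point. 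Packaging the branched-cover local model cleanly — so that ``extends as a convergent Puiseux series'' is a precise statement and not hand-waving — is the second delicate point, and I would handle it by passing to the local coordinate in which $\omega=\zeta^{a}\,d\zeta$ and $\tilde\phi=\zeta^{a+1}/(a+1)$, so the inverse is literally $\zeta=((a+1)w)^{1/(a+1)}$ and $g_z(w)=f(((a+1)w)^{1/(a+1)})$ expanded as a Puiseux series with positive radius of convergence.
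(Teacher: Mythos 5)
Your proposal follows essentially the same route as the paper's proof: lift $\sigma$ via metric completeness of $(X^\ast,|\omega|)$ (Lemma~\ref{lem:complete}), propagate the identity $g_z\circ\tilde\phi=f\circ\pi$ by analytic continuation along $\tilde\gamma$, and split the endpoint analysis according to whether $\pi\circ\tilde\gamma(1)$ lies in $\mathcal{PPL}(\omega,f)$, treating a pole of $f$ away from the zeros of $\omega$ (meromorphic extension) separately from a zero of $\omega$ of order $k$ (Puiseux series in $\tfrac{1}{k+1}\mathbb{Z}$ exponents via the local normal form). The extra worries you raise --- about the arc-length of $\sigma$ and the finiteness of $\rho(z)$ --- are not needed: the lemma does not assume $\mathcal{PPL}\neq\emptyset$ and holds trivially if $\rho(z)=\infty$, and escape of the lift to a pole of $\omega$ is already excluded because $\tilde\phi$ is unbounded near any pole while $\sigma$ is bounded on the compact interval $[0,1]$.
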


\begin{proof}
The existence of a path $\tilde{\gamma}:[0,1] \rightarrow \tilde{X^{\ast}}$ satisfying $\tilde{\phi} \circ \tilde{\gamma}=\sigma$ follows from the metric completeness of $(X^{\ast},|\omega|)$ (see Lemma~\ref{lem:complete}).
\par
Analytic continuation along the path $\tilde{\gamma}$ proves that the function $g_{z} \circ \tilde{\phi}$ coincides with $f \circ \pi$. (Recall that $\pi$ is the projection of $\tilde{X^{\ast}}$ to $X^{\ast}$).)
\par
We first consider the case when $\pi \circ \tilde{\gamma}(1) \notin \mathcal{PPL}(\omega,f)$. Following Definitions~\ref{defn:localfactor} and~\ref{defn:PPL}, there is a neighbourhood of $\pi \circ \tilde{\gamma}(1)$ in $X^{\ast}$ in which  $f$ is factorised by $\phi$. Thus  $g_{z}$ extends to $\sigma(1)$ as a holomorphic function.
\par
If $\pi \circ \tilde{\gamma}(1) \in \mathcal{PPL}(\omega,f)$ but is not a zero of $\omega$, then $\phi$ is locally injective in a neighbourhood of $\pi \circ \tilde{\gamma}(1)$ and the meromorphic function $f$ can be factorised by $\phi$. Therefore $g_{z}$ extends to $\sigma(1)$ as a pole of a meromorphic function.
\par
In the latter case, $\pi \circ \tilde{\gamma}(1)$ is a zero  of $\omega$ of order $k$. Equivalently, $\tilde{\gamma}(1)$ is a critical point  of $\tilde{\phi}$ of order $k$. The equation $f \circ \pi = g \circ \tilde{\phi}$ can be locally solved at $\tilde{\gamma}(1)$  if we consider $g$ as a convergent Puiseux series with exponents belonging to  $\frac{1}{k+1}\mathbb{Z}$. This provides the correct extension of $g_{z}$ to $\sigma(1)$. The function $g_{z}$ cannot extend to $\sigma(1)$  holomorphically because in this case, $f$ will be locally factorised (see Definition~\ref{defn:localfactor}) by a primitive of $\omega$ and $\tilde{\gamma(1)}$ would not belong to the principal polar locus.
\end{proof}

\begin{corollary}\label{cor:CriticalNontrivial}
Consider a compact connected Riemann surface $X$  with a non-zero meromorphic $1$-form $\omega$ and a meromorphic function $f$ on $X$ such that $\mathcal{PPL}(\omega,f) \neq \emptyset$. Then any point $z \in X^{\ast} \setminus \mathcal{PPL}(\omega,f)$ has a finite critical radius $\rho(z)$.
\end{corollary}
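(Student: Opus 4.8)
The plan is to argue by contradiction. Suppose $\rho(z)=+\infty$; then the Voronoi function $g_{z}$ extends to an entire function on $\mathbb{C}$. The idea is that on the translation surface $(X,\omega)$ every point of $\mathcal{PPL}(\omega,f)$ can be joined to $z$ by a path of finite length, and developing such a path into the complex plane produces a \emph{finite} point at which $g_{z}$ is obliged to be singular by Lemma~\ref{lem:path} --- contradicting its being entire.

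First I would fix a point $p\in\mathcal{PPL}(\omega,f)$, which exists by hypothesis, and note that $p\in X^{\ast}$ (every element of $\mathcal{PPL}(\omega,f)$ is either a pole of $f$ that is not a pole of $\omega$, or a zero of $\omega$). Since $X$ is compact and connected, $X^{\ast}$ is connected, so there is a path in $X^{\ast}$ from $z$ to $p$; because $\mathcal{PPL}(\omega,f)$ is a finite, hence closed, set, I may cut this path at the first parameter value at which it meets $\mathcal{PPL}(\omega,f)$ and reparametrise, obtaining $\gamma:[0,1]\to X^{\ast}$ with $\gamma(0)=z$, $\gamma(1)\in\mathcal{PPL}(\omega,f)$, and $\gamma(s)\notin\mathcal{PPL}(\omega,f)$ for $s\in[0,1[$. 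Lifting $\gamma$ to a path $\tilde\gamma:[0,1]\to\tilde{X^{\ast}}$ starting at a preimage $\tilde z$ of $z$ as in Lemma~\ref{lem:path}, and setting $\sigma:=\tilde\phi\circ\tilde\gamma$ (with $\tilde\phi$ the primitive of $\pi^{\ast}\omega$ normalised by $\tilde\phi(\tilde z)=0$), I get a path $\sigma:[0,1]\to\mathbb{C}$ with $\sigma(0)=0$ and bounded image, since $\sigma([0,1])$ is compact. Because $\rho(z)=+\infty$, the conditions $|\sigma(s)|<\rho(z)$ for $s\in[0,1[$ and $|\sigma(1)|\le\rho(z)$ hold automatically, so Lemma~\ref{lem:path} applies: its lift of $\sigma$ coincides with $\tilde\gamma$, so $\pi\circ\tilde\gamma(1)=\gamma(1)\in\mathcal{PPL}(\omega,f)$, and therefore alternative (2) of that lemma holds and $g_{z}$ does not extend holomorphically to $\sigma(1)$. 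This contradicts $\rho(z)=+\infty$, which forces $g_{z}$ to be holomorphic at $\sigma(1)\in\mathbb{C}$. Hence $\rho(z)<+\infty$, as claimed.

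I expect the only subtle point to be the identification, in the last step, of the lift $\tilde\gamma$ of $\gamma$ with the lift of $\sigma$ produced by Lemma~\ref{lem:path}: near a zero of $\omega$ the map $\tilde\phi$ is a branched model of the form $w\mapsto w^{k+1}$ rather than a local biholomorphism, so a priori a path in $\mathbb{C}$ touching the image of such a point need not have a unique lift. This is exactly the situation controlled by the completeness of $(X^{\ast},|\omega|)$ (Lemma~\ref{lem:complete}) that is already used inside the proof of Lemma~\ref{lem:path}; alternatively one can avoid invoking the lemma's existence statement altogether and run the analytic-continuation argument directly along $\tilde\gamma$, using that $\gamma([0,1[)$ avoids all poles of $f$ and all non-factorisable zeros of $\omega$, so that $g_{z}\circ\tilde\phi=f\circ\pi$ persists up to the endpoint and then forces either a pole of $f$ to take a finite value at $\gamma(1)$ or $f$ to be locally factorised by a primitive of $\omega$ at $\gamma(1)$ --- both impossible. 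Everything else (connectedness of $X^{\ast}$, closedness of the finite set $\mathcal{PPL}(\omega,f)$, compactness of a path's image) is routine.
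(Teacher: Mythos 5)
Your proof is correct and follows the same route as the paper's: assume $\rho(z)=+\infty$, use connectedness of $X^{\ast}$ to join $z$ to a point of $\mathcal{PPL}(\omega,f)$ by a path, apply Lemma~\ref{lem:path} to conclude that $g_z$ cannot extend holomorphically to the developed endpoint, contradiction. Your two refinements — truncating the path at its first encounter with $\mathcal{PPL}(\omega,f)$, and pointing out that the lift produced by Lemma~\ref{lem:path} must be identified with the lift of $\gamma$ — are sensible housekeeping that the paper leaves implicit, but they do not change the argument.
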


\begin{proof}
We assume by contradiction that $\rho(z)=+\infty$, i.e. that $g_{z}$ is an entire function. Since $X$ is connected and $\mathcal{PPL}(\omega,f) \neq \emptyset$, there is a real-analytic path $\gamma:[0,1] \rightarrow X^{\ast}$ such that $\gamma(0)=0$ and $\gamma(1)=z_{0}$ where $z_{0} \in \mathcal{PPL}(\omega,f)$. The path $\gamma$ lifts to a path $\tilde{\gamma}$ on the  universal cover $\tilde{X^{\ast}}$. The path $\tilde{\gamma}$ satisfies the assumptions of Lemma~\ref{lem:path} and we conclude that $g_{z}$ is not holomorphic at $\tilde{\phi} \circ \tilde{\gamma}(1)$ which is a contradiction.
\end{proof}

\subsection{Voronoi diagrams}\label{sub:Voronoi}
Let us now stratify the translation surface $(X,\omega)$ into cells according to the values of the \textcolor{blue}{\textit{Voronoi index}} defined below.

\begin{definition}\label{defn:VoronoiIndex}
Let $X$ be a compact connected Riemann surface $X$  with a non-zero meromorphic $1$-form $\omega$ and a meromorphic function $f$ such that $\mathcal{PPL}(\omega,f) \neq \emptyset$. For any point $z \in X^{\ast} \setminus \mathcal{PPL}(\omega,f)$, let $g_{z}$ be the Voronoi function of $z$ and  let $\rho(z)$ be the critical radius at $z$. 
\par
Then the \textcolor{blue}{\textit{Voronoi index}} $\nu_{z}$ is the number of points on the  circle of radius $\rho(z)$ centered at $0$ at which $g_{z}$ does not extend as a holomorphic function.
\end{definition}

\begin{lemma}\label{lem:Index}
For any point $z \in X^{\ast} \setminus \mathcal{PPL}(\omega,f)$, $\nu_{z}$ is a  positive integer.
\end{lemma}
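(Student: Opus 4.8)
The plan is to show that $\nu_z$ is finite and at least one, using the finiteness of the critical radius (Corollary~\ref{cor:CriticalNontrivial}) together with the path-lifting dichotomy of Lemma~\ref{lem:path}. First I would observe that, by Corollary~\ref{cor:CriticalNontrivial}, the critical radius $\rho(z)$ is a positive real number, so the circle $C$ of radius $\rho(z)$ centred at $0$ is a genuine compact circle in $\mathbb{C}$. The points of $C$ at which $g_z$ fails to extend holomorphically form a closed subset of $C$ (the set of points of holomorphic extendability is open, being defined by local power-series convergence), hence $\nu_z$ is well-defined as a cardinality in $\{0,1,2,\dots\}\cup\{\infty\}$; the content of the lemma is that this cardinality lies in $\{1,2,\dots\}$.

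For the lower bound $\nu_z\geq 1$: if $g_z$ extended holomorphically to every point of $C$, then by compactness of $C$ it would extend holomorphically to an open neighbourhood of the closed disk $\overline{D}(0,\rho(z))$, and hence its radius of convergence at $0$ would strictly exceed $\rho(z)$, contradicting the definition of the critical radius. So there is at least one non-extendability point and $\nu_z\geq 1$.

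For finiteness: I would run the path-lifting argument of Lemma~\ref{lem:path}. Fix a point $w_0\in C$ at which $g_z$ does not extend holomorphically; taking the straight segment $\sigma(s)=s\,w_0$ and lifting via $\tilde\phi$ to a path $\tilde\gamma$ in $\widetilde{X^\ast}$, Lemma~\ref{lem:path} forces $\pi\circ\tilde\gamma(1)\in\mathcal{PPL}(\omega,f)$. Thus each non-extendability point of $C$ is, under analytic continuation along the corresponding radius, the $\tilde\phi$-image of a lift of a point of $\mathcal{PPL}(\omega,f)$. Since $X$ is compact, $\mathcal{PPL}(\omega,f)$ is a finite set; I then argue that only finitely many points of the single circle $C$ can arise this way. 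Concretely, around each $p\in\mathcal{PPL}(\omega,f)$ the possible local primitives of $\omega$ (equivalently, the local values of $\tilde\phi$ after continuation) take values in a discrete set of branches — $\phi$ is a local biholomorphism near a pole-type or regular point and a $(k+1)$-fold cover near a zero of order $k$ — so the preimages in $C$ of a fixed distance-$\rho(z)$ sphere around each such $p$ are isolated, hence finite on the compact circle $C$; summing over the finite set $\mathcal{PPL}(\omega,f)$ gives $\nu_z<\infty$.

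The main obstacle is the finiteness count: one must rule out an accumulation of non-extendability points on $C$. The delicate point is that a priori infinitely many homotopy classes of paths in $X^\ast$ could land the continuation at the \emph{same} point $p\in\mathcal{PPL}(\omega,f)$, so I would argue instead at the level of the metric completion of $(X^\ast,|\omega|)$: by Lemma~\ref{lem:complete} this completion adds only the finitely many cone points and metric-boundary pieces coming from $\mathcal{PPL}(\omega,f)$, and the map $\tilde\phi$ identifies the disk $D(0,\rho(z))$ isometrically with an embedded flat disk in (the completion of) $X^\ast$; a non-extendability point on $C$ corresponds to a point of $\mathcal{PPL}(\omega,f)$ lying on the metric boundary sphere of this embedded disk, and since the boundary sphere is compact while $\mathcal{PPL}(\omega,f)$ is finite with each such point contributing only finitely many boundary preimages (bounded by the cone angle, i.e. by $a_p+1$), the total is finite. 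I expect the write-up to hinge on making this "embedded flat disk, finite boundary contact set" picture precise.
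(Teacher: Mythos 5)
Your lower bound $\nu_z \geq 1$ is correct and essentially matches the paper's reasoning. The finiteness argument, however, has a genuine gap: neither of the two mechanisms you invoke actually works. The claim that $\tilde\phi$ identifies $D(0,\rho(z))$ with an \emph{embedded} flat disk in (a completion of) $X^\ast$ is false in general. In the paper's own genus-one example $X=\mathbb C/(\mathbb Z + e^{i\pi/3}\mathbb Z)$, $\omega=dz$, $f=\wp$, the point $z$ at a Voronoi vertex (the centroid of a fundamental triangle) has $\rho(z)=1/\sqrt 3$, which exceeds the injectivity radius $1/2$ of the torus; so the critical disk self-overlaps under $\pi\circ\tilde\phi^{-1}$ and is only immersed, not embedded. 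Your preliminary ``discrete set of branches'' variant is also unjustified: the set $\{\tilde\phi(\tilde p)\}$ over all lifts $\tilde p$ of a fixed $p\in\mathcal{PPL}(\omega,f)$ is a coset of the period subgroup of $\omega$, which for genus $\ge 2$ is typically dense in $\mathbb C$, so discreteness of branch values cannot be invoked without a further argument about which lifts are actually reachable by paths staying inside the disk.

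What closes the gap --- and what the paper's short proof uses --- is the convergence statement in case (2) of Lemma~\ref{lem:path}: at each non-extendability point $d$ on the critical circle $C$, the germ $g_z$ extends to a \emph{convergent} Puiseux series in a full neighbourhood of $d$. Such a series defines a function holomorphic in a punctured neighbourhood of its centre, so $d$ is the only non-extendability point in a small arc of $C$ around it; the singular set is therefore discrete in $C$, hence finite by compactness. You observe that the non-extendability set is closed, but closed plus compact does not give finite; you need isolated, and that is precisely the Puiseux-convergence half of Lemma~\ref{lem:path}, which your write-up does not invoke at this step.
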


\begin{proof}
Following Corollary~\ref{cor:CriticalNontrivial}, the radius $\rho(z)$ of convergence of $g_{z}$ at $0$ is finite and $g_{z}$ fails to extend to a holomorphic function  at least at one point of the circle of radius $\rho(z)$. At each of these singular points, $g_{z}$ extends as a Puiseux series converging in some neighbourhood of such a point (see Lemma~\ref{lem:path}). Compactness argument then proves that there can be at most finitely many such points inside the considered circle.
\end{proof}

All points in a punctured neighbourhood of a point $z\in \mathcal{PPL}(\omega,f)$ will satisfy $\nu=1$, and we extend the definition by setting $\nu_z=1$. Assuming that the principal polar locus is nonempty, the values of the Voronoi index decompose the underlying surface $X$ into:
\begin{itemize}
    \item the Voronoi cells (for which $\nu =1$);
    \item the Voronoi edges (for which $\nu =2$);
    \item the Voronoi vertices (for which $\nu \geq 3$).
\end{itemize}

\begin{definition}\label{defn:Voronoi}
Consider a compact Riemann surface $X$ with a non-zero meromorphic $1$-form $\omega$ and a meromorphic function $f$ on $X$ such that $\mathcal{PPL}(\omega,f) \neq \emptyset$. The \textcolor{blue}{\textit{Voronoi diagram}}  $\mathcal{V}_{\omega,f}$ of pair $(\omega,f)$ is the union of all points $z\in X$ such that $\nu_{z} \geq 2$; i.e. the union of the Voronoi edges and the Voronoi vertices.
\end{definition}

\begin{proposition}\label{prop:Voronoi}
Consider a compact Riemann surface $X$  with a non-zero meromorphic $1$-form $\omega$ and a meromorphic function $f$ on $X$ such that $\mathcal{PPL}(\omega,f) \neq \emptyset$. Then $\mathcal{V}_{\omega,f}$ is the union of geodesic segments in the (singular) flat metric $|\omega|$.
\end{proposition}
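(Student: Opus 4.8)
The plan is to show that near any point $z_0$ of the Voronoi diagram the index-$\geq 2$ locus coincides locally with a finite union of geodesic arcs, by tracking where the "maximal embedded disk" $\phi(z)$-images hit the singular set. Fix $z_0 \in \mathcal{V}_{\omega,f}$, so $\nu_{z_0} \geq 2$, and let $\rho_0 = \rho(z_0)$. By \Cref{lem:path}, the points of $\mathcal{PPL}(\omega,f)$ that the disk of convergence of $g_{z_0}$ "sees" on its boundary circle correspond to the lifts $\tilde{\gamma}(1)$ of radial paths in the $\tilde\phi$-chart reaching $\partial B(0,\rho_0)$; pick two such distinct obstructing points $p_1, p_2$ (using $\nu_{z_0} \geq 2$), with corresponding lifts $\tilde p_1, \tilde p_2$ in $\widetilde{X^*}$ at $\tilde\phi$-distance exactly $\rho_0$ from the basepoint. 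The first step is to set up, for $z$ in a small flat neighbourhood $U$ of $z_0$, the "distance to $p_i$ through the atlas" function: concretely, using the translation chart, $z \mapsto |c_i(z)|$ where $c_i(z)$ is the value at which the locally continued Voronoi function $g_z$ first meets the branch of the principal polar locus over $p_i$. Each such function is, in the flat coordinate, the modulus of an affine function $z \mapsto z - w_i$ (translation charts make $\tilde\phi$ an isometry onto its image), hence $|\cdot|$ of a holomorphic submersion away from $w_i$; and $\rho(z) = \min_i |c_i(z)|$ is the minimum of finitely many such functions near $z_0$ by \Cref{lem:Index} and a compactness/openness argument.

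Next I would identify the index-$2$ locus in $U$ with the equality set $\{|c_1(z)| = |c_2(z)| \leq |c_j(z)|\ \forall j\}$. The key computation is that the equation $|z - w_1| = |z - w_2|$ in a flat chart defines a straight Euclidean line — the perpendicular bisector of $w_1$ and $w_2$ — and a straight line in a translation chart is precisely a geodesic of $|\omega|$ (since transition maps are translations and the charts are local isometries). So, away from zeros of $\omega$, each Voronoi edge is locally a genuine geodesic segment; the inequalities $|c_1| \leq |c_j|$ just cut this line down to a subsegment, which is still geodesic. I would then handle the two degenerate situations: (a) when $z_0$ is itself a zero of $\omega$ (a potential Voronoi vertex), where the chart $\tilde\phi$ is a branched cover and one should instead argue on the flat cone — geodesics through a cone point of angle $> 2\pi$ still make sense and the bisector condition extends continuously, so $\mathcal{V}_{\omega,f}$ near such a point is a union of geodesic segments emanating from it; (b) when one of the obstructing points $p_i$ is a zero of $\omega$ (where $g_z$ continues as a Puiseux rather than meromorphic branch), which by \Cref{lem:path}(2) still yields a well-defined "radius at which the obstruction occurs," and the modulus-of-affine-function description of $c_i$ survives because only the value $\sigma(1)$, not the local branching, enters the definition of $\rho$.

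The main obstacle I expect is not the bisector computation itself but verifying that the "obstruction point functions" $c_i(z)$ are well-defined and vary holomorphically (as translates) on a full neighbourhood of $z_0$ — i.e. that as $z$ moves, the disk of convergence continues to be obstructed by the same finite collection $\{p_1,\dots,p_r\}$ of principal-polar points, with continuously (indeed analytically) moving $\tilde\phi$-preimages, and that no new obstruction of strictly smaller radius can appear discontinuously. This is essentially a local finiteness / semicontinuity statement for the critical radius: $\rho$ is continuous (even $1$-Lipschitz in $|\omega|$, since moving $z$ by flat distance $\varepsilon$ moves the whole picture by $\varepsilon$), and combined with the Puiseux-extension conclusion of \Cref{lem:path} and compactness of the boundary circle one gets that only finitely many obstructions are active on a neighbourhood of $z_0$. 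Once that bookkeeping is in place, $\mathcal{V}_{\omega,f}$ is exhibited near every point as a finite union of perpendicular-bisector arcs in flat charts, i.e. of geodesic segments, which is the claim. Finally one notes the global statement follows by covering $\mathcal{V}_{\omega,f}$ with such neighbourhoods.
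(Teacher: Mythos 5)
Your proposal is correct and follows essentially the same route as the paper: the Voronoi diagram near $z_0$ is the $\phi$-preimage of the perpendicular bisectors (midlines) of pairs of obstruction points $\alpha_i$ on the critical circle, and straight lines in translation charts are geodesics of $|\omega|$. The ``main obstacle'' you flag is dispatched more cleanly in the paper by observing that $g_{z_0}$ extends holomorphically to a fixed cut disk $\mathcal{D}$ of radius $r+\epsilon$ (minus $\nu$ radial slits past the $\alpha_i$), so as $z$ ranges over a small enough neighbourhood the obstruction points in the $\phi$-chart are literally the same finite set $\{\alpha_1,\dots,\alpha_\nu\}$ — one only needs the disk of convergence of $g_z$ to stay inside $\mathcal{D}$ — and no separate continuity or semicontinuity argument for $\rho$ is required.
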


\begin{proof}
For any point $z_{0} \in \mathcal{V}_{\omega,f}$ of critical radius $r$ and Voronoi index $\nu$, let us  denote by $\alpha_{1},\dots,\alpha_{\nu}$ the points of $\lbrace{ t \in \mathbb{C}~\vert~|t|=r \rbrace}$ where the Voronoi function $g_{z_{0}}$ does not extend as a holomorphic function.
\par
Actually, there exists $\epsilon>0$ such that $g_{z_{0}}$ extends as a holomorphic function to a domain $\mathcal{D}$ formed by the points of the  open disk $\lbrace{ x \in \mathbb{C}~\vert~|x|<r+\epsilon \rbrace}$ which do not belong to the cuts $[1,\frac{r+\epsilon}{r}[\alpha_{i}$ for $1\leq i \leq \nu$.
\par
For any two distinct $i,j \in \lbrace{1,\dots,\nu \rbrace}$, let the line $L_{ij}$ be the midline of the segment $[\alpha_{i},\alpha_{j}]$, i.e. the line perpendicular to $[\alpha_i, \alpha_j]$ and passing through its midpoint. Denote by $\mathcal{L}$ the union of all these lines $L_{ij}$ for $1\le i<j\le \nu$.
\par
Consider a small neighbourhood $U$ of $z_{0}$ in $X$ in which $\omega$ has a well-defined primitive $\phi$ satisfying the condition $\phi(z_0)=0$. We can assume that for any $z \in U$, $|\phi(z)|<\delta$ where $\delta<\frac{\epsilon}{2}$. It follows that for any $z \in U$, the disk of convergence of $g_{z}$ in $\phi(z)$ is contained in $\mathcal{D}$, see Figure~\ref{fig:voronoi-straight}. Moreover, unless $\phi(z) \in \mathcal{L}$, we have $\nu(z)=1$.
\par
In the open set $U$, $\phi^{-1}(\mathcal{L})$ is the union of straight segments in the flat coordinate $\phi$ which finishes the proof.
\end{proof}

\begin{figure}[h]
    \centering
    \includegraphics[width = 0.9\linewidth]{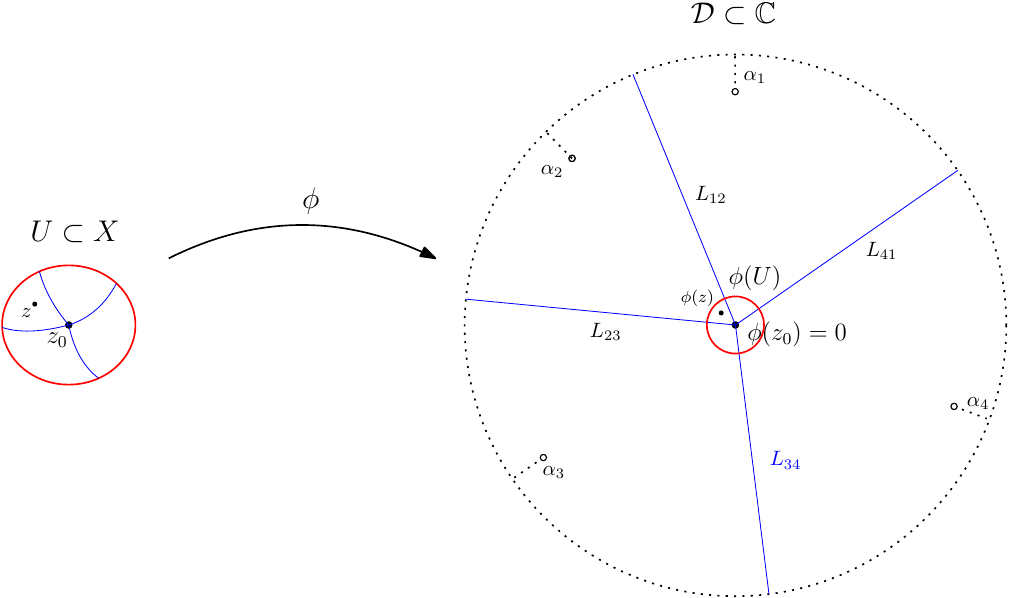}
    \caption{The Voronoi function $g_{z_0}$ extends to a holomorphic function in the domain $\mathcal{D}$. The blue lines form the union $\mathcal{L}$ of the midline rays contained in $\mathcal{D}$. The preimage $\phi^{-1}(\mathcal{L})$ is the union of the straight segments with respect to the coordinate $\phi$ in $U$.}
    \label{fig:voronoi-straight}
\end{figure}

\begin{remark} Observe that the 
Voronoi diagrams cannot be defined in  purely metric terms, as demonstrated by the following example. Consider $\omega=z dz$ and $f(z)=\frac{1}{z^{2}-1}$. Since $f$ is factorised by a primitive $\phi: z \mapsto \frac{1}{2}z^{2}$ of $\omega$, we have $\mathcal{PPL}(\omega,f) = \lbrace{ \pm 1 \rbrace}$.
\par
In the Euclidean metric, the points equidistant to $1$ and $-1$ satisfy the equation $\Re(z)=0$. However, for the analytic Voronoi diagram associated with $\mathcal{PPL}(\omega,f)$, the points $1$ and $-1$ have the same image under the primitive $\phi(z) = \frac{1}{z^2}$, and so there is no point $z$ in $\mathbb{C} \setminus \lbrace{ \pm 1 \rbrace}$ such that the corresponding Voronoi function $g_{z}$ has more than one singular point on its critical circle. Therefore the Voronoi diagram is empty and, in particular, it does not coincide with the imaginary axis as a purely metric definition might suggest.
\end{remark}

\subsection{Cauchy measure of a Voronoi diagram}\label{sub:Cauchy}

 We shall define the \textcolor{blue} {\textit{Cauchy measure}} $\mu_{\omega,f}$ of the Voronoi diagram $\mathcal{V}_{\omega,f}$ by using central angles at points of $\mathcal{PPL}(\omega,f)$,. It is obtained as the pullback of a certain differential form by a local primitive of $\omega$.

\begin{definition}\label{defn:angular}
Let $A,B$ be two distinct points in the complex plane. Denote by $L_{AB}$ their \textcolor{blue}{\textit {midline}}, i.e. the line perpendicular to the segment $[A,B]$ and passing through its middle point. We define the \textcolor{blue}{\textit{unnormalised Voronoi angular measure}} $\theta$ on  $L_{AB}$ as follows. 
For any segment $[M,N]$ contained in $L_{AB}$, let $\theta([M,N])$ be the angle at the vertex $A$ of the triangle $AMN$ (see Figure~\ref{fig:angular-measure}). Obviously,  $\theta(L_{AB})=\pi$.
\par
We define the \textcolor{blue}{\textit{normalised Voronoi measure}}  as $\Theta=\frac{1}{\pi}\theta$. It is  a probability measure on $L_{AB}$.  We note for the later use that the density of $\Theta$ on $L_{AB}$ is given by 
$$d\Theta_{AB}(P):=\frac{\vert A-B\vert }{2\pi\vert P-A\vert\vert P-B\vert}d\lambda(P),
$$  where $d\lambda$ is the Euclidean length  on $L_{AB}$.
\end{definition}

\begin{figure}[h]
    \centering
    \includegraphics[scale=0.9]{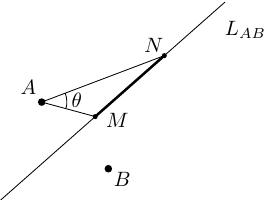}
    \caption{Illustration of the angular measure $\theta$ on the midline $L_{AB}$ of two points $A$ and $B$ in the plane. 
    }
    \label{fig:angular-measure}
\end{figure}

\begin{definition}\label{defn:Cauchy}
Consider a compact Riemann surface $X$ together  with a non-zero meromorphic $1$-form $\omega$ and a meromorphic function $f$ such that $\mathcal{PPL}(\omega,f) \neq \emptyset$.
\par
Then the \textcolor{blue}{\textit{Cauchy measure}} $\mu_{\omega,f}$ of the Voronoi diagram $\mathcal{V}_{\omega,f}$ is obtained by the integration of the differential form  obtained as  the pullback of the differential form $d\Theta_{AB}$  at each point $z$ such that  $\nu_{z}=2$,   by a local primitive $\phi$ of $\omega$ satisfying $\phi(z)=0$, see Definition~\ref{defn:angular}.
\end{definition}

\begin{remark}
Since  the principal polar locus contains  finitely many points  with a finite conical angle at each of them, the Cauchy measure $\mu_{\omega,f}$ of the whole Voronoi diagram is finite. Besides, it is invariant under the  rescaling $\omega \mapsto \lambda \omega$ where $\lambda \in \mathbb{C}^{\ast}$. 
\end{remark}


\section{Zero-free regions}\label{sec:ZeroFree}

In this section, we prove that zeros of iterates of the operator $T_{\omega}$ applied to a meromorphic function $f$ cannot accumulate inside Voronoi cells, i.e. in the complement  to  the Voronoi diagram $\mathcal{V}_{\omega,f}$. 

\begin{proposition}\label{prop:ZeroFree}
Let $X$ be a compact connected Riemann surface $X$  with a non-zero meromorphic $1$-form $\omega$ and a meromorphic function $f$ such that $\mathcal{PPL}(\omega,f) \neq \emptyset$.
\par
Then
$$\lim_{n\to+\infty}\dfrac{1}{n}\log \left| \frac{T_{\omega}^{n}f(z)}{n!}\right| = \log\frac{1}{\rho(z)}$$ 
on every compact subset of the union of the Voronoi cells, i.e., on the open subset $U \subset  X^{\ast\ast} 
$ formed by all points $z$ satisfying $\nu_{z}=1$, (see Definition~\ref{defn:VoronoiIndex}). Here $\rho(z)$ is the critical radius of the Voronoi function $g_{z}$, see Definition~\ref{defn:VorFunc}.
\par
In particular, for any compact subset $K\subset U$, there is a positive integer  $N$ such that for any $n>N$, $T_{\omega}^{n}(f)$ has no zeros in $K$.
\end{proposition}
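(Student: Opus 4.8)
The plan is to transfer the statement to the Taylor coefficients of the Voronoi functions, and then apply the classical asymptotics (Darboux's method) for the coefficients of a power series having a single algebraic singularity on its circle of convergence. Fix $z\in U$ and a local primitive $\phi$ of $\omega$ with $\phi(z)=0$. Since $g_{z}^{(k)}\circ\phi=T_{\omega}^{k}(f)$ (Definition~\ref{defn:VorFunc}), evaluation at $z$ gives $T_{\omega}^{n}f(z)/n!=a_{n}(z)$, where $a_{n}(z):=g_{z}^{(n)}(0)/n!$ is the $n$-th Taylor coefficient of $g_{z}$ at $0$. By Corollary~\ref{cor:CriticalNontrivial} the critical radius $\rho(z)$ is finite, and the Cauchy--Hadamard formula already gives $\limsup_{n}\tfrac1n\log|a_{n}(z)|=\log\tfrac1{\rho(z)}$ for each $z\in U$; so it remains only to show that this $\limsup$ is a genuine limit and that the convergence $\tfrac1n\log|a_{n}(z)|\to\log\tfrac1{\rho(z)}$ is uniform on every compact $K\subset U$. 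The stated zero-freeness is then immediate: $\rho$ is bounded on $K$, so $\log\tfrac1{\rho}\ge c$ there for some $c\in\mathbb{R}$, whence $\tfrac1n\log|T_{\omega}^{n}f(z)/n!|\ge c-1$ uniformly on $K$ for $n$ large, forcing $T_{\omega}^{n}f$ to be nonvanishing on $K$. I will also use that $\rho$ is continuous on $X^{\ast}\setminus\mathcal{PPL}(\omega,f)$, which follows from Lemma~\ref{lem:path}: lifting paths to the universal cover, $\rho(z)$ is a distance, in the complete flat metric of Lemma~\ref{lem:complete}, from a lift of $z$ to the lifted principal polar locus, and distance to a fixed closed set is $1$-Lipschitz.

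For the uniform upper bound I would use Cauchy's estimate $|a_{n}(z)|\le r^{-n}\max_{|t|=r}|g_{z}(t)|$ valid for $r<\rho(z)$. The Voronoi functions glue, by analytic continuation, into a single function $(z,t)\mapsto g_{z}(t)$ holomorphic on the open set $\{(z,t):z\in U,\ |t|<\rho(z)\}$, so for each fixed $\varepsilon>0$ the number $C_{\varepsilon}:=\sup\{\,|g_{z}(t)|:z\in K,\ |t|\le\rho(z)e^{-\varepsilon}\,\}$ is finite. Taking $r=\rho(z)e^{-\varepsilon}$ gives $\tfrac1n\log|a_{n}(z)|\le\log\tfrac1{\rho(z)}+\varepsilon+\tfrac1n\log C_{\varepsilon}$ for all $z\in K$, hence $\limsup_{n}\sup_{z\in K}\!\big(\tfrac1n\log|a_{n}(z)|-\log\tfrac1{\rho(z)}\big)\le\varepsilon$; letting $\varepsilon\to0$ completes this half.

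The matching lower bound is where the hypothesis $\nu_{z}=1$ enters. By definition of the Voronoi index $g_{z}$ has exactly one singular point $t_{0}(z)$ on $|t|=\rho(z)$, and by Lemma~\ref{lem:path} the germ of $g_{z}$ at $t_{0}(z)$ is a meromorphic function of $(t-t_{0}(z))^{1/(k+1)}$ --- where $k\ge0$ is the order at the corresponding point $p(z)\in\mathcal{PPL}(\omega,f)$ as a zero of $\omega$, the case $k=0$ being an ordinary pole of $g_{z}$ at a pole of $f$ --- while $g_{z}$ extends holomorphically past the circle $|t|=\rho(z)$ away from $t_{0}(z)$. Subtracting sufficiently many terms of this local expansion from $g_{z}$ leaves a remainder holomorphic past $|t|=\rho(z)$ except at $t_{0}(z)$ and smooth enough there to have Taylor coefficients $o\!\big(n^{\gamma(z)}\rho(z)^{-n}\big)$, where $\gamma(z)\in\mathbb{R}$ is read off from the leading exponent; the subtracted terms themselves contribute $\sim C(z)\,n^{\gamma(z)}\,t_{0}(z)^{-n}$ with $C(z)\neq0$ (the singularity being genuine since $\nu_{z}=1$). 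As $|t_{0}(z)|=\rho(z)$, this gives $\tfrac1n\log|a_{n}(z)|\to\log\tfrac1{\rho(z)}$ pointwise. To make it uniform on $K$, observe that on the open locus $U=\{\nu=1\}$ the obstruction $p(z)$ realising the critical radius is locally unique, hence locally the same point of $\mathcal{PPL}(\omega,f)$; so $k$, the exponent $\gamma$, the enlarged radius of holomorphy of the remainder and the relevant smoothness bounds can all be chosen locally uniform, while $t_{0}(z)$ and $C(z)$ depend continuously on $z$. A finite subcover of $K$ then upgrades the pointwise estimate to a uniform one.

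The main obstacle is precisely this last point: running Darboux's method with the base point $z$ as a parameter and controlling the error term uniformly for $z\in K$. It rests on two geometric facts --- the local constancy of the nearest obstruction on $\{\nu=1\}$, and the continuous dependence of the Voronoi functions and of their singular local expansions on $z$ --- both of which come from the completeness of $(X^{\ast},|\omega|)$ (Lemma~\ref{lem:complete}) and the path-lifting picture of Lemma~\ref{lem:path}. Everything else --- the reduction to Taylor coefficients, the Cauchy-estimate upper bound, and the deduction of zero-freeness --- is routine.
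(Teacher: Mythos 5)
Your overall strategy is the paper's: reduce $T_{\omega}^{n}f(z)/n!$ to a Taylor/derivative coefficient of a Voronoi function and apply Darboux-type singularity analysis for a power series with a single Puiseux-type singularity on its circle of convergence. Once you make the recentering $g_{z}(t)=g_{z_{0}}(t+\phi(z))$ explicit (your ``gluing''), your $a_{n}(z)=g_{z}^{(n)}(0)/n!$ becomes the paper's $g_{z_{0}}^{(n)}(x)/n!$ with $x=\phi(z)$ ranging over a small disk, and the needed uniform estimate is the paper's Lemma~\ref{lem:coeffs-conv}. Your upper bound via the Cauchy estimate and your deduction of zero-freeness from the two-sided bound are fine, as is the use of Lemma~\ref{lem:path} to identify the type of the obstruction and of Lemma~\ref{lem:complete} for continuity of $\rho$.

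The genuine gap is at the step you yourself single out as the main obstacle: the uniform-in-$z$ lower bound. You assert that local constancy of the nearest obstruction, continuous dependence of $t_{0}(z)$ and $C(z)$, and a finite subcover make the Darboux asymptotic uniform, and then call the rest routine. It is not. A pointwise asymptotic $a_{n}(z)\sim C(z)\,n^{\gamma}\,t_{0}(z)^{-n}$ for each fixed $z$ does not become uniform on a compact set just because $C(z)$ and $t_{0}(z)$ vary continuously; one must control the $o(\cdot)$ error term with rates and constants uniform in $z$, and classical Darboux gives no such control out of the box. The paper's Appendix (Theorem~\ref{thm:Orlov} and Corollary~\ref{cor:Orlov}) is devoted precisely to this: it reworks Orlov's contour-deformation argument keeping the evaluation point as a parameter, moves to a $z$-dependent keyhole contour with radius $\epsilon(z)$, separates the principal singular part, and applies Watson's lemma with explicit remainder bounds so that all resulting constants depend only on the function and on $\delta$, not on the particular $z$. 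That analysis is the real content behind Lemma~\ref{lem:coeffs-conv} and hence behind Proposition~\ref{prop:ZeroFree}; until you carry out an analogue of it, your proof has a hole exactly where you flag it.
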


Since the distance from any compact set in $U$ to a singularity, and to the boundary, is bounded below by a positive constant, it suffices to prove a local result. Proposition~\ref{prop:ZeroFree} will be deduced from a purely analytic lemma about the asymptotics of the coefficients of the Taylor series of a Puiseux series, due to Orlov in \cite{Orlov}. 
Lemma~\ref{lem:coeffs-conv} applies a uniform version of his result, see our Corollary \ref{cor:Orlov}.

\begin{lemma}\label{lem:coeffs-conv}
Let $g$ be a holomorphic function on the open centered disk $D(0,R) \subset \mathbb{C}$ of radius $R>0$. Suppose that $g$ can be extended holomorphically to $\partial D(R)$, except at only one point $d$ on the boundary circle $\partial D(R)$ around which it extends as a Puiseux series (with non-integer exponents). Then, there exists $\delta>0$ such that 
$$\frac{1}{n}\log \left|\frac{g^{(n)}(x)} {n!}\right| \to \log \frac{1}{|x -d|}$$ uniformly on the closed disk $\overline{D}(0,\delta)$ of radius $\delta$.
\end{lemma}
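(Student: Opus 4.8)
The plan is to reduce everything to the single-singularity case $g(w) = (w-d)^{\alpha} h(w)$ near $d$ with $\alpha \notin \mathbb{Z}$ and $h$ holomorphic and non-vanishing at $d$, by writing $g$ as a sum of such a "singular part'' localised at $d$ plus a function holomorphic in a disk strictly larger than $\overline{D}(0,R)$. More precisely, I would choose a small disk $\Delta$ around $d$ on which the Puiseux expansion converges, pick a branch of $(w-d)^{\alpha}$ on $\Delta$ minus the outward radial cut, and split $g = g_{\mathrm{sing}} + g_{\mathrm{hol}}$ where $g_{\mathrm{sing}}$ carries precisely the finitely many fractional-exponent terms of the Puiseux series (this is legitimate because the Puiseux series has exponents in $\frac1{k+1}\mathbb{Z}$ bounded below, so only finitely many lie below any integer, and the integer-exponent tail can be absorbed into $g_{\mathrm{hol}}$), and $g_{\mathrm{hol}}$ then extends holomorphically past $\partial D(0,R)$ everywhere. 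The contribution of $g_{\mathrm{hol}}$ is negligible: its Taylor coefficients at any $x$ in a small disk decay like $(R')^{-n}$ with $R' > R$, so $\frac1n \log|g_{\mathrm{hol}}^{(n)}(x)/n!| \to \log(1/R') < \log(1/R) \le \log(1/|x-d|)$, hence it does not affect the claimed limit once we show the $g_{\mathrm{sing}}$ term achieves $\log(1/|x-d|)$.

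For $g_{\mathrm{sing}}$ it suffices, by linearity and a further such splitting, to treat one term $(w-d)^{\alpha}$ (times a holomorphic non-vanishing factor, which I would first Taylor-expand and again peel off a negligible holomorphic remainder, leaving a constant times the pure power). The $n$-th Taylor coefficient of $(w-d)^{\alpha}$ at a point $x$ near $0$ is, up to the usual binomial constants, $\binom{\alpha}{n}(x-d)^{\alpha - n}$ (a genuine power series in $x$ centred at $0$ since $|x| < |d| = R$ after shrinking $\delta$). Then
$$\frac1n \log\left|\frac{g_{\mathrm{sing}}^{(n)}(x)}{n!}\right| = \frac1n \log\left|\binom{\alpha}{n}\right| + \frac{n-\alpha}{n}\log\frac{1}{|x-d|} + \frac1n\log|x-d|^{-\alpha} + o(1).$$
Here I use $|\binom{\alpha}{n}| = |\Gamma(n-\alpha)| / (|\Gamma(-\alpha)|\, n!) \sim c\, n^{-\alpha-1}$ by Stirling (valid precisely because $\alpha \notin \mathbb{Z}$, so $\Gamma(-\alpha)$ is finite and nonzero and there is no cancellation), whence $\frac1n \log|\binom{\alpha}{n}| \to 0$ and $\frac1n \log|x-d|^{-\alpha} \to 0$ uniformly for $x$ in a compact subset of $D(0,R)$ bounded away from $d$. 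This gives the pointwise limit $\log(1/|x-d|)$; uniformity on $\overline{D}(0,\delta)$ follows once $\delta$ is chosen small enough that $\overline{D}(0,\delta)$ is disjoint from $d$ and contained in $\Delta$, because all the error estimates above are uniform in $x$ over such a set (Stirling asymptotics are uniform; the power-series remainders from peeling off $g_{\mathrm{hol}}$ and the holomorphic prefactor are uniform on $\overline{D}(0,\delta)$ since their radii of convergence exceed $\delta$).

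The main obstacle is the bookkeeping of the splitting $g = g_{\mathrm{sing}} + g_{\mathrm{hol}}$: one must make sure that after removing the finitely many fractional-power terms (and, inside each, the holomorphic non-vanishing prefactor's Taylor remainder), what is left really is holomorphic on a disk of radius $R' > R$ and not merely on $D(0,R) \cup \Delta$. This is where the hypothesis that $d$ is the \emph{only} boundary singularity is essential — it guarantees $g_{\mathrm{hol}}$ has no other obstruction on $\partial D(0,R)$ — and where a compactness argument on the circle $\partial D(0,R)$ is needed to produce the uniform radius $R'$. I would also need the elementary but slightly delicate point that adding a term whose exponential growth rate is \emph{strictly smaller} does not change the $\limsup$/$\liminf$ of $\frac1n\log|\cdot|$ of the sum, and that this "domination'' persists uniformly on $\overline{D}(0,\delta)$; this is where the strict inequality $\log(1/R') < \log(1/|x-d|)$ for $x \in \overline{D}(0,\delta)$ (which holds since $|x-d|$ stays $\le R + \delta < R'$) gets used. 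Everything else is Stirling and routine estimates.
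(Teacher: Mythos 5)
Your decomposition $g = g_{\mathrm{sing}} + g_{\mathrm{hol}}$, with $g_{\mathrm{sing}}$ a finite sum of fractional powers and $g_{\mathrm{hol}}$ holomorphic on a disk of radius $R' > R$, cannot be carried out in general, and this is where the argument breaks. A Puiseux series with a non-integer exponent in $\frac{1}{M}\mathbb{Z}$ typically has \emph{infinitely} many non-integer-exponent terms (e.g.\ $g(w) = (w-d)^{1/2}h(w)$ with $h$ holomorphic and $h(d)\neq 0$ produces exponents $1/2, 3/2, 5/2, \dots$); the tail is not integer-exponent, so removing any finite number of fractional terms — or, in your secondary splitting, peeling off a Taylor remainder from the holomorphic prefactor $h$ — leaves a remainder that still has a Puiseux-type singularity at $d$, only vanishing to higher fractional order there. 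Such a remainder is \emph{not} holomorphic past $\partial D(0,R)$, and its Taylor coefficients centred at $x$ still grow like $|x-d|^{-n}$: they are smaller than the leading term only by a polynomial factor $n^{-m'}$ with $m'>0$, not by the exponential factor $(|x-d|/R')^{n}$ your argument requires. Your appeal to the uniqueness of $d$ as a boundary singularity to secure $R'>R$ addresses the wrong obstruction; the issue is not other singularities on the circle, but that the subtraction does not remove the singularity at $d$ itself. Without a genuine exponential gap you cannot simply add $\frac{1}{n}\log$-limits — cancellations in $g_{\mathrm{sing}}^{(n)}+g_{\mathrm{hol}}^{(n)}$ are not ruled out — so the polynomial subdominance must actually be proved.

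That polynomial comparison is precisely what the paper gets from the contour-deformation and Watson's-lemma analysis in Orlov's theorem (Corollary~\ref{cor:Orlov}), which produces the refined asymptotic $\frac{g^{(n)}(x)}{n!} = b(n,x)(d-x)^{-n} + O\bigl((|d-x|(1+\epsilon))^{-n}\bigr)$ with $b(n,x)\asymp n^{-(m_0+1)}$ uniformly in $x$, and which cannot be bypassed by peeling off finitely many terms. Your model computation for a single pure power $(w-d)^{\alpha}$, using $\left|\binom{\alpha}{n}\right|\sim c\,n^{-\alpha-1}$ from Stirling, is correct and does capture the leading behaviour that the paper isolates; what is unjustified is the reduction to this model.
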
\label{lem:disk}

\begin{proof}
Suppose that the Puiseux series at $d$ has a leading exponent $m_0$, meaning that $B_{m_0}(d-x)^{m_0}$ is the term with the smallest non-integer or negative exponent.

By Corollary~\ref{cor:Orlov}, there exist $\epsilon, \delta > 0$ such that on $\overline{D}(0,\delta)$ one has the  asymptotic behavior  
$$\frac{g^{(n)}(x)}{n!} = b(n,x)(d-x)^{-n} + O\left((\vert d-x\vert (1+\epsilon))^{-n}\right),$$
 where $$
b(n,x)=K (x-d)^{m_0}n^{-(m_0+1)}
(M+
O(n^{-\frac{1}{M}}\vert x-d\vert^{\frac{1}{M}})),
$$
with all constants depending only on $g$ and $R$.
Then,
\begin{equation}\label{eq: before-log}
\left\vert\frac{g^{(n)}(x)}{n!}\right\vert = |d-x|^{-n} \left\vert b(n,x) + O((1+\epsilon)^{-n})\right\vert.
\end{equation}
We claim that the expression $\frac{1}{n}\log|b(n,x)+O((1+\epsilon)^{-n})|$ converges  as $n \to +\infty$ to 0 uniformly on $\overline{D}(0,\delta)$. It suffices to check that $\log|b(n,x)|$ grows at a slower rate than a power of $n$ in $\overline{D}(0,\delta)$. This holds because there exist constants $C_1, C_2$ for which the inequality 
$$C_1n^{-(m_0+1)} \leq |b(n,x)| \leq C_2n^{-(m_0+1)}$$
holds for all $x$ and sufficiently large $n$. 
Therefore, taking log of \eqref{eq: before-log}, dividing by $n$, and taking limits results in
$$\lim\limits_{n \to +\infty}\frac{1}{n}
\log\left|\frac{g^{(n)}(x)}{n!}\right| = -\log|x-d| $$
where the convergence is uniform in $\overline{D}(\delta)$ as claimed.
\end{proof}

We now prove Proposition~\ref{prop:ZeroFree} by applying Lemma~\ref{lem:coeffs-conv} to the Voronoi function of a point contained in a Voronoi cell.

\begin{proof}[Proof of Proposition~\ref{prop:ZeroFree}]
For any $z_{0} \in U$, the Voronoi function $g_{z_{0}}$ converges in the open disk of radius $\rho(z_{0})$ centered at $0$ and extends as a holomorphic function to every point of its boundary circle, except at one point $d$, to a neighbourhood of which $g_{z_0}$ extends as a Puiseux series (see Lemma~\ref{lem:path} and Definition~\ref{defn:VoronoiIndex}). On any connected compact subset $K\subset U$ the distance to the singular point $d$ is bounded below by a non-zero constant. Using Lemma~\ref{lem:coeffs-conv} we then deduce  the existence of a neighbourhood $W$ of $0$ in which  the sequence of functions ${\frac{1}{n}}\log\left\vert\frac{g^{(n)}_{z_{0}}(x)}{n!}\right\vert$ converges to $-\log |x -d|$ uniformly. Therefore, there is a neighbourhood of $z_{0}$ in $X$ in which  $${\frac{1}{n}}\log\left\vert\frac{T_{\omega}^{n}(f)(z)}{n!}\right\vert = {\frac{1}{n}}\log\left\vert \frac{g^{(n)}_{z_{0}}(\phi(z))}{n!}\right\vert $$  converges uniformly to $- \log {\rho(z)}$ where $\rho(z)$ is the radius of convergence of $g_{z}$. The last claim follows immediately.
\end{proof}

\section{Edges in the limit set}\label{sec:Edges}

The remaining part of the proof of Theorem~\ref{thm:MAIN} is an application of certain results of the logarithmic potential theory. We do this in three steps. The first will show that  
$$\lim_{n\to +\infty} \dfrac{1}{n}\log \left| \frac{T_{\omega}^{n}f(z)}{n!}\right| = \log\frac{1}{\rho(z)}$$ in the space $L^{1}_{loc} (X^{\ast})$ of locally integrable functions on $X^{\ast}$ (with respect to the measure $\omega \wedge \overline{\omega}$). In addition to the uniform convergence version of Orlov's theorem, we need to analyse what happens on the Voronoi diagram, using the minimal modulus theorem. The next step is to note that $L^{1}_{loc} (X^{\ast})$-convergence automatically implies convergence of Laplacians, for the Laplace operator $\Delta_{\omega}$ induced by the flat metric $|\omega|$ on $X^{\ast}$. The third step is identifying these Laplacians. We first note that, up to normalisation, the Laplacian of $\dfrac{1}{n}\log \left| \frac{T_{\omega}^{n}f(z)}{n!}\right|$ considered as a distribution coincides with the root-counting measure of $T_{\omega}^{n}f(z)$. As a distribution the sequence of these Laplacians converges to the Laplacian of $\log\frac{1}{\rho(z)}$, which we then calculate to be $\frac{\mu_{\omega,f}}{A}+\frac{1}{A} \sum\limits_{p \in \mathcal{P}} (d_{p}-1)\delta_{p}$. This claim  is exactly the second part of Theorem~\ref{thm:MAIN}. After that we will deduce the first part of Theorem~\ref{thm:MAIN} from this convergence. 

\subsection{Convergence in $L^{1}_{loc}(X^{\ast})$}\label{sub:L1LocConvergence}

Proposition~\ref{prop:ZeroFree} shows that $\frac{1}{n}\log \left| \frac{T_{\omega}^{n}f(z)}{n!}\right|$ converges uniformly to $\log\frac{1}{\rho(z)}$ on every compact subset inside the union of the Voronoi cells of $X$, not containing the singularities. Oscillations of $T_{\omega}^{n}f(z)$ near the edges of the Voronoi diagram force  us to work with the weaker type of convergence, namely, with the 
$L^{1}_{loc}$-convergence.

\begin{proposition}\label{prop:L1LocConvergence}Let $\omega$ be a meromorphic 1-form and $f$ a meromorphic function on a compact Riemann surface $X$. Assume that $\mathcal{PPL}(\omega,f)\neq \emptyset$. For any $z \in X^{\ast}$, there exists a neighbourhood $U$ of $z$ such that on any compact subset $K \subset U$, one has
$$
\lim_{n\to +\infty}\int\limits_{K} \bigg\vert \frac{1}{n}\log \big\vert  \frac{T_{\omega}^{n}f(z)}{n!}\big\vert - \log\frac{1}{\rho(z)} \bigg\vert \ \omega \wedge \overline{\omega} = 0,
$$
where $\rho(z)$ is the critical radius of the Voronoi function $g_{z}$.
\end{proposition}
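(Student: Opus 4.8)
The plan is to deduce the $L^{1}_{loc}$-statement from the uniform convergence on Voronoi cells established in Proposition~\ref{prop:ZeroFree}, using the classical potential-theoretic fact that a sequence of subharmonic functions on a planar domain which is locally uniformly bounded from above and converges Lebesgue-almost everywhere to an almost-everywhere finite function must converge to it in $L^{1}_{loc}$ (this combines the compactness of such families with a routine subsequence argument). First, fix $z\in X^{\ast}$. Since $\mathcal{PPL}(\omega,f)$, the zeros of $\omega$ and the poles of $\omega$ are all finite, I choose a small coordinate disk $U\ni z$, with coordinate $x$ vanishing at $z$ in which $\omega=x^{k}\,dx$ (so $k=d_{z}\ge 0$ is the order of $\omega$ at $z$), such that $\overline{U}$ is compact in $X^{\ast}$ and meets $\mathcal{PPL}(\omega,f)$ together with the zeros of $\omega$ in at most the point $z$; in this chart $\omega\wedge\overline{\omega}=|x|^{2k}\,|dx\wedge d\overline{x}|$, which is comparable to Lebesgue measure on $\overline{U}$ and for which $\log\tfrac{1}{|x|}$ is integrable. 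The key preliminary remark is that $\mathcal{V}_{\omega,f}$ is a union of geodesic segments (Proposition~\ref{prop:Voronoi}), hence a null set; adding the finitely many zeros of $\omega$ keeps it null, and on the complement of this null set inside $U\setminus\{z\}$ every point lies in a Voronoi cell, so Proposition~\ref{prop:ZeroFree} gives that $u_{n}:=\tfrac{1}{n}\log\bigl|\tfrac{T_{\omega}^{n}f}{n!}\bigr|$ converges to $\log\tfrac{1}{\rho}$ pointwise there, hence $\omega\wedge\overline{\omega}$-almost everywhere on $U$. It remains to upgrade this almost-everywhere convergence to convergence in $L^{1}_{loc}(U,\omega\wedge\overline{\omega})$, which I do in two cases.

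If $z\notin\mathcal{PPL}(\omega,f)$, then $\overline{U}$ is disjoint from $\mathcal{PPL}(\omega,f)$, so by Proposition~\ref{prop:growth} the function $T_{\omega}^{n}f$ is holomorphic on $U$ for all large $n$, whence $u_{n}$ is subharmonic on $U$. Cauchy estimates applied to the Voronoi function $g_{z_{0}}$ of a point $z_{0}\in U$ give $\bigl|\tfrac{T_{\omega}^{n}f(w)}{n!}\bigr|=\bigl|\tfrac{g_{z_{0}}^{(n)}(\phi(w))}{n!}\bigr|\le C_{z_{0}}\,(\rho(z_{0})/2)^{-n}$ for $w$ near $z_{0}$, so $(u_{n})$ is locally uniformly bounded above on $U$. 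Since $0<\rho<\infty$ on $U$ (Corollary~\ref{cor:CriticalNontrivial}), the limit $\log\tfrac{1}{\rho}$ is everywhere finite, and the classical fact above yields $u_{n}\to\log\tfrac{1}{\rho}$ in $L^{1}_{loc}(U)$, equivalently in $L^{1}_{loc}(U,\omega\wedge\overline{\omega})$.

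If $z\in\mathcal{PPL}(\omega,f)$, then by Proposition~\ref{prop:growth} for all large $n$ the only pole of $T_{\omega}^{n}f$ on $U$ is $z$, of order $N_{n}=\alpha_{z}+n(k+1)$. I set $w_{n}:=x^{N_{n}}\,\tfrac{T_{\omega}^{n}f}{n!}$, which for large $n$ is holomorphic on $U$ and non-vanishing at $z$, so $v_{n}:=\tfrac{1}{n}\log|w_{n}|$ is subharmonic on $U$ and satisfies $v_{n}=u_{n}+\tfrac{N_{n}}{n}\log|x|$ on $U\setminus\{z\}$. The sequence $(v_{n})$ is locally uniformly bounded above: shrinking $U$ so that some circle $\{|x|=R'\}\subset U$ lies in the Voronoi cell of $z$, the maximum principle together with the uniform convergence of Proposition~\ref{prop:ZeroFree} on $\{|x|=R'\}$ bounds $v_{n}(x)\le\tfrac{N_{n}}{n}\log R'+\max_{|y|=R'}\log\tfrac{1}{\rho(y)}+o(1)$ on $\{|x|\le R'\}$. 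Moreover, for $x$ close enough to $z$ the analytic continuation of $g_{x}$ is singular only over $\mathcal{PPL}(\omega,f)$, so its nearest singularity is at $z$ and the critical radius equals the $|\omega|$-distance from $x$ to $z$, namely $\rho(x)=\tfrac{|x|^{k+1}}{k+1}$; combining with the almost-everywhere convergence above, $v_{n}=u_{n}+\tfrac{N_{n}}{n}\log|x|\to\log\tfrac{1}{\rho}+(k+1)\log|x|=\log(k+1)$ almost everywhere on $U$. Hence the classical fact gives $v_{n}\to\log(k+1)$ in $L^{1}_{loc}(U)$, and also in $L^{1}_{loc}(U,\omega\wedge\overline{\omega})$ since $|x|^{2k}$ is bounded; and $\tfrac{N_{n}}{n}\log|x|\to(k+1)\log|x|$ in $L^{1}(U,\omega\wedge\overline{\omega})$ because $|x|^{2k}\,|\log|x||$ is integrable and $\tfrac{N_{n}}{n}\to k+1$. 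Subtracting, $u_{n}=v_{n}-\tfrac{N_{n}}{n}\log|x|\to\log(k+1)-(k+1)\log|x|=\log\tfrac{1}{\rho}$ in $L^{1}_{loc}(U,\omega\wedge\overline{\omega})$, which is the assertion.

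The main obstacle is the behaviour near $\mathcal{PPL}(\omega,f)$: there $u_{n}$ is not subharmonic, because $T_{\omega}^{n}f$ acquires a pole of order $\sim(k+1)n$ at $z$, and both $u_{n}$ and its limit diverge logarithmically, so Proposition~\ref{prop:ZeroFree} says nothing near $z$. The decisive device is that the precise pole order furnished by Proposition~\ref{prop:growth} allows one to factor out exactly the singular part and write $u_{n}=v_{n}-(N_{n}/n)\log|x|$ with $v_{n}$ subharmonic, the first summand converging to a constant and the second being an explicit $L^{1}$ term; once this is set up, the degeneration of $\omega\wedge\overline{\omega}$ at a zero of $\omega$ is a harmless nuisance since $|x|^{2k}$ is bounded and $\log\tfrac{1}{|x|}$ remains $\omega\wedge\overline{\omega}$-integrable.
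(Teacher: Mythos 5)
Your proof is correct, but it takes a genuinely different and substantially softer route than the paper's. The paper establishes the $L^{1}_{loc}$ convergence by hand, via Lemma~\ref{lem:LocConvergencePuiseux}, whose proof is a quantitative integral estimate: it decomposes the disk into a piece away from the ``cut locus'' where Lemma~\ref{lem:coeffs-conv} (Orlov's asymptotic) gives uniform convergence, and a tubular $\epsilon$-neighbourhood of the cut locus where the Cartan--Boutroux minimum modulus principle (Lemma~\ref{lem:MinimumPrinciple}) plus a disk covering (Lemma~\ref{lem:VoronoiDiskCovers}) controls the contribution; points of $\mathcal{PPL}$ are handled separately using the explicit asymptotic of Corollary~\ref{cor:Orlov}. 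You instead invoke the compactness theorem for subharmonic functions (local uniform upper bound plus a.e.\ convergence to a subharmonic limit forces $L^{1}_{loc}$ convergence, by a routine subsequence argument), after reducing to the subharmonic case by subtracting the exact polar part at a point of $\mathcal{PPL}$. This is shorter and avoids the Cartan--Boutroux machinery entirely; the paper's more explicit argument, on the other hand, gives quantitative control (e.g., the bound $C\epsilon$ in Lemma~\ref{lem:MainEstimate}) that a purely soft compactness argument does not supply.

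A few points to make sure are nailed down. First, the subharmonic limit must be identified with $\log\tfrac{1}{\rho}$: this requires knowing that $\log\tfrac{1}{\rho}$ is itself subharmonic on $X^{\ast}\setminus\mathcal{PPL}(\omega,f)$, which it is, being locally the maximum of the harmonic functions $-\log|\phi(\cdot)-d_i|$ over the singular points $d_i$ on the critical circle (this also shows it is continuous and everywhere finite there, which you need). Second, when $z\in\mathcal{PPL}(\omega,f)$, your identity $\rho(x)=|x|^{k+1}/(k+1)$ near $z$ is exactly the $|\omega|$-distance from $x$ to the cone point $z$ and is valid once $x$ is closer to $z$ than to any other point of $\mathcal{PPL}(\omega,f)$; this should be flagged as a consequence of Lemma~\ref{lem:path} rather than stated as obvious, since the developing map wraps $(k+1)$ times around a zero of order $k$. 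Third, subharmonicity of $u_n$ (respectively $v_n$) requires holomorphicity of $T_\omega^n f$ (respectively $w_n$) on $U$ for large $n$, which you correctly obtain from Proposition~\ref{prop:growth}; it also requires $T_\omega^n f\not\equiv 0$, which holds since $\mathcal{PPL}(\omega,f)\neq\emptyset$ forces $T_\omega^n f$ to have poles for large $n$. Finally, the passage from $L^1_{loc}$ with respect to Lebesgue measure in a chart to $L^1_{loc}$ with respect to $\omega\wedge\overline{\omega}$ is indeed harmless because $|x|^{2k}$ is bounded and $|x|^{2k}\log\tfrac{1}{|x|}$ is integrable for $k\ge 0$; note that $k\ge 0$ is guaranteed precisely because you work on $X^{\ast}$, away from the poles of $\omega$.
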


To do this, we first need to check that $\log\frac{1}{\rho(z)}$ is a locally integrable function.

\begin{lemma}\label{lem:RadiusLocallyIntegrable}
In the notation as Proposition \ref{prop:L1LocConvergence}, assuming that $\mathcal{PPL}(\omega,f)\neq \emptyset$, the function $z \mapsto \log\frac{1}{\rho(z)}$ belongs to $L^{1}_{loc} (X^{\ast})$.
\end{lemma}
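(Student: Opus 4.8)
The plan is to reduce the assertion to a local integrability estimate near each point of $X^*$, and near a point of $\mathcal{PPL}(\omega,f)$ to compute the critical radius $\rho$ essentially explicitly in an adapted coordinate. First I would record that, by the Cauchy--Hadamard formula applied to the Taylor series $g_z=\sum_{n\ge 0}\tfrac{T_\omega^n f(z)}{n!}\,\zeta^n$ (legitimate since $g_z^{(n)}\circ\phi=T_\omega^n(f)$, see Definition~\ref{defn:VorFunc}), one has $\log\tfrac1{\rho(z)}=\limsup_{n}\tfrac1n\log\bigl|\tfrac{T_\omega^n f(z)}{n!}\bigr|$; hence $z\mapsto\log\tfrac1{\rho(z)}$ is a measurable function on $X^*$ with values in $[-\infty,+\infty]$, finite on $X^*\setminus\mathcal{PPL}(\omega,f)$ by Corollary~\ref{cor:CriticalNontrivial} (using that $X$ is connected). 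Since $L^1_{loc}(X^*)$ concerns integrability over compact subsets of $X^*$, which are uniformly bounded away from the poles of $\omega$, on any such subset $\omega\wedge\overline{\omega}$ is dominated by a constant multiple of Lebesgue measure in a coordinate chart; it therefore suffices to prove that $\log\tfrac1\rho$ is Lebesgue-integrable on a small coordinate disk $U$ centred at an arbitrary point $z_0\in X^*$.

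If $z_0\notin\mathcal{PPL}(\omega,f)$, I would recentre the Voronoi function: if $\phi$ is the primitive with $\phi(z)=0$, then $g_{z'}=g_z(\,\cdot\,+\phi(z'))$ near $0$ (with the evident modification via the branched primitive when $z_0$ is a zero of $\omega$), so that $|\rho(z)-\rho(z')|\le|\phi(z')|\to 0$ as $z'\to z$. Thus $\rho$ is continuous and strictly positive near $z_0$, and $\log\tfrac1\rho$ is bounded on $U$.

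If $z_0=p\in\mathcal{PPL}(\omega,f)$ is not a zero of $\omega$ — hence a pole of $f$ — I would take a coordinate $w$ with $w(p)=0$ and $\omega=dw$, shrinking $U$ so that $f\circ w^{-1}$ is meromorphic on $w(U)$ with its only singularity the pole at $0$; then $g_z$ is the Taylor expansion of $f\circ w^{-1}$ at $w(z)$, so for $z$ close to $p$ its radius of convergence equals the distance to that pole, giving $\rho(z)=|w(z)|$ and $\log\tfrac1\rho=-\log|w|\in L^1(U)$ because $\int_{D(0,r)}\bigl|\log|w|\bigr|\,dA<\infty$. If $z_0=p$ is a zero of $\omega$ of order $k\ge 1$, I would use the normal form $\omega=w^k\,dw$, $w(p)=0$, with $U$ chosen so that $f$ has no pole on $\overline{U}\setminus\{p\}$; for $w_0:=w(z)\ne 0$ the primitive of $\omega$ vanishing at $z$ is $\phi(w)=\tfrac{w^{k+1}-w_0^{k+1}}{k+1}$, with local inverse $\psi(\zeta)=(w_0^{k+1}+(k+1)\zeta)^{1/(k+1)}$ (the branch sending $0$ to $w_0$), and $g_z=f\circ\psi$. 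On $D\bigl(0,\tfrac{|w_0|^{k+1}}{k+1}\bigr)$ we have $w_0^{k+1}+(k+1)\zeta\ne 0$ and $|\psi(\zeta)|<2^{1/(k+1)}|w_0|$, so $\psi$ maps this disk into $w(U)\setminus\{0\}$, where $f$ is holomorphic; hence $\rho(z)\ge\tfrac{|w_0|^{k+1}}{k+1}$, which already yields $\log\tfrac1\rho\le\log(k+1)+(k+1)\log\tfrac1{|w|}\in L^1(U)$. (Equality actually holds: with $s=w_0^{k+1}+(k+1)\zeta$ and $f=\sum_j a_jw^j$ near $p$ one gets $g_z=\sum_j a_j s^{j/(k+1)}$, a genuine Puiseux series at $\zeta_\ast:=-\tfrac{w_0^{k+1}}{k+1}$ — this is where $p\in\mathcal{PPL}(\omega,f)$, i.e.\ the non-factorisation of $f$, forces some $a_j$ with $(k+1)\nmid j$ to be nonzero; compare Lemma~\ref{lem:path}.) Covering a given compact subset of $X^*$ by finitely many such disks and summing the estimates then finishes the proof.

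The delicate step I anticipate is the last case, the zero of $\omega$ lying in $\mathcal{PPL}(\omega,f)$: one must handle the branched primitive and its multivalued inverse, verify that $g_z$ acquires no singularity strictly inside the circle $|\zeta|=\tfrac{|w_0|^{k+1}}{k+1}$, and confirm that it genuinely becomes singular on that circle — which is precisely where the non-factorisation hypothesis enters and must be invoked carefully. Everything else reduces to the elementary local integrability of $\log\tfrac1{|w|}$ in the plane.
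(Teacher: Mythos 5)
Your proof is correct and follows essentially the same route as the paper's: reduce to the elementary local integrability of $\log\frac{1}{|w-b|}$ after computing or bounding the critical radius $\rho$ in a suitable chart. The paper's proof works only from points $z_{0}\in X^{\ast}\setminus\mathcal{PPL}(\omega,f)$, establishing $\rho(z)=\min_{i}|\phi(z)-d_{i}|$ on a neighbourhood and concluding with a terse ``the claim follows,'' which leaves implicit the behaviour right at a point $p\in\mathcal{PPL}(\omega,f)$ that is a zero of $\omega$; your case-by-case analysis, in particular the explicit lower bound $\rho(z)\geq\frac{|w_{0}|^{k+1}}{k+1}$ obtained by controlling the branched primitive and its local inverse $\psi$, fills in precisely that gap and is the most delicate step — the rest of your argument matches the paper's. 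Your observation that $L^{1}_{\mathrm{loc}}$ with respect to $\omega\wedge\overline{\omega}$ reduces to ordinary Lebesgue integrability on coordinate charts (since $\omega\wedge\overline{\omega}$ is dominated by Lebesgue measure on compacts avoiding poles of $\omega$) is also a clean way to finish.
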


\begin{proof}
For any $z_{0} \in X^{\ast} \setminus \mathcal{PPL}(\omega,f)$,  consider its Voronoi function $g_{z_{0}}$ and a local primitive $\phi$ of $\omega$ such that $\phi(z_{0})=0$. Let $d_{1},\dots,d_{k}$ be the points of the critical circle where $g_{z_{0}}$ does not extend holomorphically, with $k$ being the Voronoi index of $z_{0}$, see Lemma~\ref{lem:Index}. Then, there is a neighbourhood $U$ of $z_{0}$ such that for any $z \in U$, $\rho(z)= \min\limits_{1 \leq i \leq k} |\phi(z)-d_{i}|$.
\par
Calculating a simple integral we can  show that the function $w \mapsto \log\vert w-b\vert$ for some constant $b \in \mathbb{C}$ is locally integrable, in any compact set, possibly containing $b$.
It follows that $x \mapsto - \min\limits_{1 \leq i \leq k} \log|x-d_{i}|$ is locally integrable in $\mathbb{C}$.
We deduce that $z \mapsto \log\frac{1}{\rho(z)}$ is integrable in a small enough neighbourhood of $z_{0}$ in $ X^{\ast}$. The claim follows.
\end{proof}

Proposition~\ref{prop:L1LocConvergence} follows from Lemma \ref{lem:LocConvergencePuiseux} below whose proof is postponed to Section~\ref{sub:minmodulus}.

\begin{lemma}\label{lem:LocConvergencePuiseux}
Let $g$ be a holomorphic function on the open centered disk (at 0) $D(\rho) \subset \mathbb{C}$ of radius $\rho>0$. Suppose that $g$ can be extended holomorphically to $\partial D(\rho)$, except for finitely many points $d_{1},\dots,d_{k}$ on the boundary circle $\partial D(\rho)$ where it extends as a Puiseux series. Then, setting $$M(x)= - \min\limits_{1 \leq i \leq k} \log|x-d_{i}|,$$ there exists $\delta>0$ such that 
$$
\int\limits_{\overline{D}(\delta)} \left\vert \frac{1}{n}\log \big|\frac{g^{(n)}(x)} {n!}\big| - M(x) \right\vert d\lambda \longrightarrow 0
$$
where $\overline{D}(\delta)$ is the closed centered disk of radius $\delta$ and $d\lambda$ is the standard Lebesgue measure.
\end{lemma}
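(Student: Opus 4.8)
\textbf{Proof plan for Lemma~\ref{lem:LocConvergencePuiseux}.}

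The plan is to reduce the $L^1$ statement to a pointwise statement plus a uniform integrable domination, and to obtain both ingredients from a uniform version of Orlov's asymptotics applied near each boundary singularity $d_i$. First I would cover the small disk $\overline{D}(\delta)$ by finitely many pieces: for each $i$, a sector-like region $S_i$ consisting of points $x$ with $|x - d_i| = \min_j |x - d_j|$ (so that the dominant singularity of $g^{(n)}$ is $d_i$), together with possibly a central region where several $|x-d_j|$ are comparable. On $S_i$, Corollary~\ref{cor:Orlov} gives, for $x$ in a small disk around $0$, an expansion of the shape $\frac{g^{(n)}(x)}{n!} = b_i(n,x)(d_i - x)^{-n} + O\big((|d_i-x|(1+\epsilon))^{-n}\big)$ with $b_i(n,x) = K (x-d_i)^{m_{0,i}} n^{-(m_{0,i}+1)}(M + o(1))$, exactly as in the proof of Lemma~\ref{lem:coeffs-conv}; the only new point is that one must track this over a full neighbourhood of $0$ rather than a single $x$, which Corollary~\ref{cor:Orlov} (being a uniform statement) is designed to provide.

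Next I would establish the pointwise limit $\frac{1}{n}\log\big|\frac{g^{(n)}(x)}{n!}\big| \to M(x)$ for a.e.\ $x \in \overline{D}(\delta)$. On the interior of each $S_i$, away from the (measure-zero) set where two distances are equal, one has $|d_i - x| < |d_j - x|$ for $j \neq i$, so the term $b_i(n,x)(d_i-x)^{-n}$ strictly dominates the error $O((|d_i-x|(1+\epsilon))^{-n})$ once $1+\epsilon$ exceeds the ratio $|d_i-x|/|d_j-x|$ is not even needed — the error there is already $(|d_i - x|(1+\epsilon))^{-n}$, smaller than $|d_i-x|^{-n}$ up to the subexponential factor $b_i$. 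Taking logarithms, dividing by $n$, and using that $\log|b_i(n,x)|$ is $O(\log n)$ and bounded below by $-C - (m_{0,i}+1)\log n$ (valid for $x$ in a disk of radius $\delta$ small enough that $b_i$ does not vanish, here using the explicit form of $b_i$ and shrinking $\delta$ so the $o(1)$ correction stays bounded away from $-M$), we get $\frac{1}{n}\log\big|\frac{g^{(n)}(x)}{n!}\big| \to -\log|d_i - x| = M(x)$ pointwise on $S_i^\circ$.

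For the $L^1$ convergence itself I would invoke dominated convergence. The limit $M(x) = -\min_i \log|x-d_i|$ is integrable on $\overline{D}(\delta)$ (a finite sum of translates of the locally integrable function $\log|x-b|$, as already noted in the proof of Lemma~\ref{lem:RadiusLocallyIntegrable}). It remains to dominate $\big|\frac{1}{n}\log|\frac{g^{(n)}(x)}{n!}| - M(x)\big|$ uniformly in $n$ by a fixed $L^1$ function. The upper bound is easy: Cauchy estimates on circles of radius slightly below $\rho$ (or directly from the expansion) give $\frac{1}{n}\log|\frac{g^{(n)}(x)}{n!}| \le -\log(\rho - \delta) + o(1) \le C$, a constant. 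The lower bound — i.e.\ controlling how small $|g^{(n)}(x)|$ can get, which is where zeros of $T_\omega^n f$ live and is the reason $L^1$ rather than uniform convergence is forced — is the main obstacle. Here I would use the expansion: near $S_i^\circ$ the dominant term has modulus comparable to $n^{-(m_{0,i}+1)}|d_i-x|^{-n}$, and cancellation with the error term can only occur in a thin annular neighbourhood of the locus $|d_i - x| = |d_j - x|$; on that bad set one bounds $\big|\frac{1}{n}\log|\cdots| - M(x)\big|$ crudely by something like $C + \frac{1}{n}\big|\log|d_i-x| - \log|d_j - x|\big|$ plus the contribution of $\log|b_i|$, and checks that the resulting function is integrable with a bound independent of $n$ — essentially because $\log$ of a distance to a line is integrable and the $\frac1n$ factor kills the $\log n$ terms. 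Assembling the pieces over the finitely many sectors $S_i$ and the central region (handled the same way, comparing the two or more nearly-equal distances) yields the claim, and hence, via the substitution $x = \phi(z)$ as in the deduction of Proposition~\ref{prop:L1LocConvergence}, the stated $L^1_{loc}$ convergence.
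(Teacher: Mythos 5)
Your plan correctly identifies the one-sided pieces of the argument — Corollary~\ref{cor:Orlov} handles the region where one singularity strictly dominates, the upper bound is a Cauchy estimate, and the dangerous region is a thin neighbourhood of the midlines where $|x-d_i|=|x-d_j|$. But the crucial step, producing an $n$-independent $L^1$ dominating function so that dominated convergence can be invoked, is exactly where the argument breaks down, and it is the point where the paper's proof does something genuinely different.

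The obstruction is that $\frac{1}{n}\log\big|\frac{g^{(n)}(x)}{n!}\big|$ is not bounded below near the midlines uniformly in $n$: at each zero of $g^{(n)}$ the integrand $\big|\frac{1}{n}\log|\frac{g^{(n)}(x)}{n!}| - M(x)\big|$ is $+\infty$, and these zeros are precisely the objects whose asymptotics the whole paper is about — they accumulate densely along the midlines as $n \to \infty$. Your proposed dominating function $C + \frac{1}{n}\big|\log|d_i-x| - \log|d_j-x|\big|$ (plus the $\log|b_i|$ correction) is finite away from $\{d_1,\dots,d_k\}$, so it cannot dominate an integrand that blows up at moving points; and there is no fixed $L^1$ majorant, because the set of singularities swept out by the zeros of all the $g^{(n)}$ is dense in the midline set. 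Dominated convergence therefore does not apply, and the heart of the lemma remains unproved. What the paper uses instead is the Cartan–Boutroux minimum modulus principle (Lemma~\ref{lem:MinimumPrinciple}): for each small $\eta$, there is an exceptional union of discs of total area $O(\eta^2)$ outside of which $\log|g^{(n)}|$ is bounded below by $-H(\eta)\log M_n$. Summing this quantitative lower bound over a decreasing sequence $\eta_j = 1/j$, combined with a covering of the thin neighbourhood $A_\epsilon$ of the midline skeleton by $O(1/\epsilon)$ small discs and the Cauchy upper bound $M_n \le h^{-n}B$, gives the key estimate (Lemma~\ref{lem:MainEstimate}) that the $L^1$ contribution from $A_\epsilon$ is $O(\epsilon)$ uniformly in $n$. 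This is an averaging / exceptional-set argument, not a pointwise domination; it is the ingredient you would need to add to turn your sketch into a correct proof.
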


\begin{proof}[Proof of Proposition~\ref{prop:L1LocConvergence}]
For any $z_{0} \in X^{\ast} \setminus \mathcal{PPL}(\omega,f)$, we can apply Lemma~\ref{lem:LocConvergencePuiseux} to the Voronoi function $g_{z_{0}}$. Notice that $\omega \wedge \overline{\omega} = \phi^{\ast} d\lambda$ and $T_{\omega}^{k} (f) = g^{(k)} \circ \phi$. This shows the required  convergence to 0 in a compact set not containing $\mathcal{PPL}(\omega,f)$, and it remains to prove that in a small enough neighbourhood of a point $a$ in $\mathcal{PPL}(\omega,f)$, the integral can be made arbitrarily small independently of $n$. We argue as follows. Note that if the statement is true for the derivative $g^{(k)}$ then it is also true for $g$, since 
$\frac{1}{n} \log\big( \frac{(k+n)!}{n!} \big)\to 0$ as $n\to +\infty$. Hence we take a derivative of our function of sufficiently large order and, in particular, assume that $m_0+1\leq - \frac{1}{M}$ in a neighbourhood given by $\vert z-a\vert <1$. This allows us to rewrite the asymptotic estimate of Corollary~\ref{cor:Orlov} as
\begin{equation}\label{eq: last-log}
\left\vert\frac{g^{(n)}(x)}{n!}\right\vert = |a-x|^{-n+m_0}n^{-m_0+1}(C + B(z,k)),
\end{equation}
where $C\neq 0$ and $B(z,k)$ goes uniformly in $z$ to 0 as $k\to +\infty$ (see Remark~\ref{rmk:orlov}).
Taking the logarithm of the absolute value, dividing by $n$, and applying the triangle inequality, we see that the integral over a small neighbourhood of $a$ behaves like a multiple of the integral of $\log\vert x-a\vert$, i.e. it can be made arbitrarily small by choosing a sufficiently small neighbourhood. This proves the proposition.
\end{proof}

\subsection{Minimum modulus principle} 
\label{sub:minmodulus}

This subsection is devoted to the proof of  Lemma~\ref{lem:LocConvergencePuiseux}. We recall that $d_{1},\dots,d_{k}$ are the points on the critical circle to which $g$ does not extend holomorphically. Let us  assume that $k \geq 2$ since  if $k=1$, Lemma~\ref{lem:LocConvergencePuiseux} immediately follows from Lemma~\ref{lem:coeffs-conv}.
\par
Assuming that $d_{1},\dots,d_{k}$ are cyclically ordered by their arguments, we introduce $k$ points $c_{1},\dots,c_{k}$ on the critical circle of radius $\rho$ in such a way that $c_{i}$ is the midpoint of the circle arc $(d_{i}, d_{i+1})$. We define the subset $A \subset \mathbb{C}$ formed by the  $k$ segments $[0,c_{1}],\dots,[0,c_{k}]$. We denote by $A_{\epsilon}$ the open tubular $\epsilon$-neighbourhood of $A$ in the centered disk $D(\rho)$ (centered at the origin), see Figure~\ref{fig:tubular Voronoi}.
\par
\begin{figure}[h]
    \centering
\includegraphics[width=0.35\linewidth]{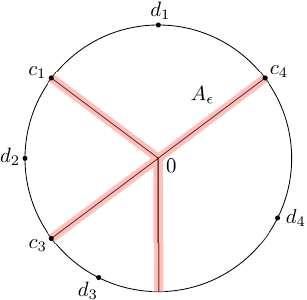}
    \caption{The open tubular neighbourhood $A_{\epsilon}$ of $A$.} 
    \label{fig:tubular Voronoi}
\end{figure}

In order to prove Lemma~\ref{lem:LocConvergencePuiseux}, we need, in particular, to show that the contribution of an $\epsilon$-neighbourhood of $A$ to the value of the integral of the function  $\left\vert \frac{1}{n}\log \big|\frac{g^{(n)}(x)} {n!}\big\vert \right\vert$ can be made arbitrarily small by the choice of $\epsilon$. The rest of the integral will be computed/estimated  using Lemma~\ref{lem:coeffs-conv}.

\begin{lemma}\label{lem:MainEstimate}
Under the assumptions of Lemma~\ref{lem:LocConvergencePuiseux}, there exists a constant $C$ depending only on $g$ such that for any small enough $\epsilon>0$ and $\delta>0$, and all large enough $n$, one has the inequality
$$
\frac{1}{n}\int\limits_{\overline{A_{\epsilon}\cap D(\delta)}} \left\vert \log \big|\frac{g^{(n)}(x)} {n!}\big\vert \right\vert d\lambda
< C \epsilon, 
$$
where $d\lambda$ is the standard Lebesgue measure on $\mathbb{C}$ and $\overline{A_{\epsilon}\cap D(\delta)}$ is the closure of the intersection of $A_{\epsilon}$ with the disk of radius $\delta$ centered at the origin
\end{lemma}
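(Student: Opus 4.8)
The plan is to reduce the estimate to a bound on the integral of $\log|x-d_i|$ near the singular points, together with control of $g^{(n)}$ away from them, using the uniform Orlov asymptotics (Corollary~\ref{cor:Orlov}).

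First I would split $A_\epsilon \cap D(\delta)$ into two parts according to distance to the critical circle: an inner part $B_1$ where $|x|$ is bounded away from $\rho$ (say $|x| \leq \rho - \eta$ for a small fixed $\eta$), and an outer collar $B_2$ where $\rho - \eta < |x| < \rho$. On $B_1$, since $x$ stays a definite distance from all the $d_i$, Cauchy estimates on the disk of convergence give $\left|\frac{g^{(n)}(x)}{n!}\right| \leq \frac{C}{(\rho-\eta)^n}$ and, using that $g$ is not identically zero together with the lower bound coming from the dominant term in Orlov's expansion, a matching lower bound of the form $c\, n^{-N}(\rho')^{-n}$ for a slightly larger radius $\rho' < \rho$. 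Hence on $B_1$ the integrand $\frac{1}{n}\left|\log\left|\frac{g^{(n)}(x)}{n!}\right|\right|$ is bounded by a constant independent of $n$, and since the Lebesgue measure of $B_1 \subset A_\epsilon$ is $O(\epsilon)$ (it is contained in a tubular $\epsilon$-neighbourhood of finitely many segments, intersected with a fixed disk), its contribution is $O(\epsilon)$.

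The outer collar $B_2$ is where the work lies. Here I would invoke Corollary~\ref{cor:Orlov} (the uniform version of Orlov's asymptotics) on a neighbourhood of $0$, which gives, after passing to a sufficiently high derivative as in the proof of Proposition~\ref{prop:L1LocConvergence}, an estimate of the shape
$$
\left|\frac{g^{(n)}(x)}{n!}\right| = \Big|\sum_{i=1}^{k} b_i(n,x)(d_i-x)^{-n}\Big| + O\big((\rho(1+\epsilon'))^{-n}\big)
$$
with $b_i(n,x)$ of polynomial order in $n$ and bounded above and below by powers of $n$ times a power of $|x-d_i|$. Taking logarithms and dividing by $n$, the dominant behaviour on $B_2$ is $-\log \min_i |x-d_i| = M(x)$ up to terms that are $o(1)$ uniformly, EXCEPT on neighbourhoods of the $c_j$ and the $d_i$ where two terms may be comparable and near the $d_i$ where $\log|x-d_i| \to -\infty$. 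The key point is that $B_2$, being a thin collar of width $\eta$ around arcs of the circle near the $c_j$ and containing small disks around the $d_i$, has Lebesgue measure $O(\epsilon)$, and on it $M(x)$ and $\frac{1}{n}\log\left|\frac{g^{(n)}(x)}{n!}\right|$ are both dominated in $L^1$ by (a constant multiple of) $\sum_i |\log|x-d_i||$, which is locally integrable; hence its contribution is again $O(\epsilon)$ once $\epsilon$, and correspondingly the width of the collar, is taken small.

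I expect the main obstacle to be the interface near the midpoints $c_j$: there two exponential terms $(d_j - x)^{-n}$ and $(d_{j+1}-x)^{-n}$ have equal modulus, so their sum can have cancellation and the logarithm can dip far below $n M(x)$, or oscillate. The remedy is that this near-cancellation is confined to an $\epsilon$-tube around the segments $[0,c_j]$, which is precisely $A_\epsilon$; so rather than estimating $\frac{1}{n}\log\left|\frac{g^{(n)}(x)}{n!}\right|$ pointwise on $A_\epsilon$ (where it may be very negative), I would use the trivial upper bound $\frac{1}{n}\log\left|\frac{g^{(n)}(x)}{n!}\right| \leq \frac{1}{n}\log\big(\text{Cauchy bound}\big) \leq C$ for the positive part, and for the negative part use that $\log^-$ of a holomorphic function is subharmonic-controlled: the integral of $\left|\log\left|\frac{g^{(n)}(x)}{n!}\right|\right|$ over a small disk is controlled by its average on a slightly larger circle (Jensen/sub-mean-value inequality) where we do have the uniform asymptotics, giving a bound $O(\epsilon \cdot \sup_{|x|=\rho-\eta,\ \text{near } c_j} |\log| \cdots||) = O(\epsilon)$ after integrating in the radial direction. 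Carrying this Jensen-type argument through, with the constant $C$ extracted as depending only on $g$ and $\rho$, completes the proof of Lemma~\ref{lem:MainEstimate}.
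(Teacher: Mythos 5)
Your proposal does not go through as written; there are two genuine problems.

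\textbf{Geometric confusion.} The set over which one must estimate is $\overline{A_\epsilon\cap D(\delta)}$ with $\delta$ \emph{small}: $D(\delta)$ is a small disk around the origin (this is the same $\delta$ as in Lemma~\ref{lem:coeffs-conv} and Lemma~\ref{lem:LocConvergencePuiseux}, and it is taken small enough that the Orlov asymptotics apply on $\overline{D}(\delta)\setminus A_\epsilon$). Your split into an inner region $B_1=\{|x|\le\rho-\eta\}$ and an outer collar $B_2=\{\rho-\eta<|x|<\rho\}$ therefore degenerates: for $\delta<\rho-\eta$ one has $A_\epsilon\cap D(\delta)\subset B_1$ and $B_2$ is empty. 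The points $d_i$ (on $|x|=\rho$) and the midpoints $c_j$ are nowhere near the domain of integration, so ``neighbourhoods of the $c_j$ and the $d_i$'' and a ``sup on $|x|=\rho-\eta$ near $c_j$'' are not part of the picture. The genuine difficulty is not near the circle but near the rays $[0,c_j]\subset A$ passing through the origin, where the two nearest exponentials $(d_j-x)^{-n}$ and $(d_{j+1}-x)^{-n}$ have equal modulus, zeros of $g^{(n)}$ accumulate, and $\log\lvert g^{(n)}/n!\rvert$ can be arbitrarily negative.

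\textbf{The lower bound is the real issue, and Jensen does not supply it.} On what you call $B_1$ you assert a ``matching lower bound $c\,n^{-N}(\rho')^{-n}$'' from the dominant Orlov term; this is exactly what fails on $A$, since $g^{(n)}$ vanishes arbitrarily close to $A$, so no such pointwise lower bound holds uniformly in $x$. You do identify the cancellation near $A$, but the proposed remedy (sub-mean-value/Jensen: ``the integral of $\lvert\log\lvert g^{(n)}/n!\rvert\rvert$ over a small disk is controlled by its average on a slightly larger circle'') is not a valid step. Subharmonicity of $\log\lvert g^{(n)}\rvert$ bounds a \emph{point value} from above by a circle average; it does not bound the $L^1$-norm of $\lvert\log\lvert g^{(n)}\rvert\rvert$ over a disk, and the circle you would average over still meets $A$, so the average is not a priori controlled — the argument as stated is circular. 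What is needed is a quantitative minimum-modulus estimate: a lower bound for $\log\lvert g^{(n)}/n!\rvert$ holding \emph{outside} an exceptional set of small measure, with the deficit in terms of the maximum modulus. That is exactly the Cartan--Boutroux lemma (Lemma~\ref{lem:MinimumPrinciple}), and it is the crux of the paper's proof: cover $\overline{A_\epsilon\cap D(\delta)}$ by $O(\rho/\epsilon)$ disks of total area $O(\epsilon)$ (Lemma~\ref{lem:VoronoiDiskCovers}), get an upper bound $h^{-n}B$ by Cauchy on enlarged disks, a reference value at each center from Lemma~\ref{lem:coeffs-conv}, then apply Cartan--Boutroux in each small disk to bound $\log\lvert g^{(n)}/n!\rvert$ from below off an exceptional set of area $O(\eta^2\epsilon)$, and finally integrate by an exhaustion in $\eta$ using the logarithmic blow-up $H(\eta)\sim-\log\eta$. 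Without this (or an equivalent quantitative small-exceptional-set estimate), the bound $C\epsilon$, uniform in $n$, cannot be reached.
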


The key ingredient of the proof is the classical Cartan-Boutroux lemma (i.e.,  the Minimum Modulus Principle) which provides an estimate of the growth of $\log \vert g(x)\vert$ near its singularities in a form  suitable for integration. For the sake of completeness, let us  quote the following result  from \cite{L} (see also \cite{AZ}). (In Lemma~\ref{lem:MinimumPrinciple}, $e$ stands for the standard Euler  number, i.e. the base of the natural logarithm.)

\begin{lemma}(\cite[Lecture 11, Theorem 4]{L})\label{lem:MinimumPrinciple}
Consider a holomorphic function $g$ on the closed disk $\overline{D(0,2er)}$ with $\vert g(0)\vert=1$ and $r>0$. Let $\eta$ be an arbitrary number satisfying $0 < \eta < 3e/2$ and  denote by $M$ the maximal value of $\big \vert g(x)\big \vert$ on the circle $\vert x\vert=2er$.
\par
Then there exists a family of (excluded) 
disks in $\overline{D(0,2er)}$ the sum of radii of which does not exceed $4\eta r$ and such that in the complement to the union of these disks one has the inequality
 \begin{equation}
 \label{eq:CB}
 \log\vert g(x)\vert >-H(\eta)\log M,
 \end{equation} 
 where $H(\eta)=2+\log\frac{3e}{2\eta}$.
 \end{lemma}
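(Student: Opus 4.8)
The plan is to reproduce the classical proof of the Cartan--Boutroux minimum modulus theorem; since the statement is quoted verbatim, the honest route is to cite \cite[Lecture 11]{L} (see also \cite{AZ}), and below I only indicate the standard argument and explain how the constants $4\eta r$ and $H(\eta)=2+\log\frac{3e}{2\eta}$ arise. First I would dispose of the trivial case $\log M\le 0$: then $\vert g\vert\le 1=\vert g(0)\vert$ on $\vert x\vert=2er$, so $g$ is a unimodular constant and the inequality holds with no excluded disks. Assume henceforth $\log M>0$.

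The first genuine step is Jensen's formula for $g$ on the circle $\vert x\vert=2er$, which, using $\vert g(0)\vert=1$, gives
\[
\sum_{g(a)=0,\ \vert a\vert<2er}\log\frac{2er}{\vert a\vert}\ \le\ \frac{1}{2\pi}\int_0^{2\pi}\log\bigl\vert g(2ere^{i\theta})\bigr\vert\,d\theta\ \le\ \log M ;
\]
since each zero $a$ with $\vert a\vert\le 2r$ contributes a summand $\ge\log e=1$, the number $p$ of zeros of $g$ (with multiplicity) in $\overline{D(0,2r)}$ satisfies $p\le\log M$. I then factor $g=P\cdot g_1$, where $P(x)=\prod_{j=1}^p(x-a_j)$ is the monic polynomial whose roots are exactly these zeros; thus $g_1=g/P$ is holomorphic on $\overline{D(0,2er)}$, all its remaining zeros have modulus $>2r$, and in particular $g_1$ is zero-free on $\overline{D(0,2r)}$.

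The quotient $g_1$ is controlled by a Harnack (equivalently Borel--Carath\'eodory) estimate. Using $\vert P\vert\ge (2r(e-1))^p$ on $\vert x\vert=2er$ together with the subharmonic maximum principle, one gets $\log\vert g_1\vert\le \log M-p\log(2r(e-1))$ on $\overline{D(0,2er)}$; since $\log\vert g_1\vert$ is harmonic on $\overline{D(0,2r)}$, applying Harnack to the nonnegative harmonic function $\log M-p\log(2r(e-1))-\log\vert g_1\vert$ on the concentric disk of radius $r$ (Harnack ratio $(2r+r)/(2r-r)=3$) yields $\log\vert g_1(x)\vert\ge -2\log M-p\log(2r(e-1))$ for $\vert x\vert\le r$; this is where the additive ``$2$'' in $H(\eta)$ is born. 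For $P$ I invoke Cartan's covering lemma applied to $a_1,\dots,a_p$: after fixing its parameter so that the excluded disks have total radius $\le 4\eta r$, one has $\prod_{j=1}^p\vert x-a_j\vert\ge (c\eta r)^p$ off these disks, with $c$ absolute. Adding the two bounds, for $\vert x\vert\le r$ outside the excluded disks one obtains $\log\vert g(x)\vert=\log\vert g_1(x)\vert+\log\vert P(x)\vert\ge -2\log M+p\log\frac{c\eta}{2(e-1)}\ge -(2+\log\tfrac1\eta+c')\log M$ by $p\le\log M$, with $c'$ absolute; the remaining work is purely arithmetic, since with the sharp form of Cartan's lemma and the optimal comparison radii $2+\log\frac1\eta+c'$ collapses to exactly $2+\log\frac{3e}{2\eta}$.

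I expect the genuine difficulty to be twofold, which is why this carries a name. First, the Cartan--Boutroux covering lemma itself, proved by an inductive covering argument over nested disks, which is precisely what produces the excluded disks with the prescribed total radius; I would either cite it or reproduce its short proof. Second, the bookkeeping of all the absolute constants so as to land on the precise value $H(\eta)=2+\log\frac{3e}{2\eta}$ — this is sensitive to which concentric circles enter Jensen's formula, the Harnack step, and Cartan's lemma, and it is the place where a careless argument gives only a slightly larger constant. Since both ingredients are entirely classical, in the final write-up I would simply refer to \cite{L}.
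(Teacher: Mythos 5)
The paper itself offers no proof of this lemma: it is quoted verbatim from \cite[Lecture 11, Theorem 4]{L} (with \cite{AZ} as a secondary reference), which is precisely what you propose to do in the final write-up, so your approach is essentially the same as the paper's. Your accompanying sketch of the classical Cartan--Boutroux argument (Jensen's formula to bound the number of zeros in the smaller disk, factorisation into a Cartan-controlled monic polynomial times a zero-free factor estimated via Harnack/Borel--Carath\'eodory) is the standard route and is sound, although, as you yourself concede, with the particular comparison radii you chose it yields a slightly larger constant than $H(\eta)=2+\log\frac{3e}{2\eta}$, the sharp value being supplied only by the cited reference.
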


\begin{remark}\label{rem:H}
We want to apply this result for small $\eta$. Note that our  condition on $\eta$ implies that:
\begin{enumerate}
    \item $H(\eta)\geq 2>0$;
    \item $H(\eta)\sim - \log \eta$ as $\eta\to 0$; 
    \item the total area of the excluded disks in  Lemma~\ref{lem:MinimumPrinciple} is less than $\pi(4\eta R)^2$ (since the maximal area occurs in the case of one excluded disk).
\end{enumerate}
\end{remark}

\begin{remark}\label{rem:normalisation}
If $g(0) \neq 0$, we apply  Lemma~\ref{lem:MinimumPrinciple} to $g(z)/g(0)$ and modify \eqref{eq:CB} obtaining
 \begin{align} 
 \log\vert g(x)\vert-\log\vert g(0)\vert &> -H(\eta)(\log M-\log\vert g(0)\vert) \notag   \\
 \iff \log\vert g(x)\vert &>-H(\eta)\log  M+(1+H(\eta))\log\vert g(0)\vert. \label{eq:CB2}
 \end{align} 
\end{remark}

We will also make use of a specific covering of $A_{\epsilon}$ by a family of disks. 

\begin{lemma}\label{lem:VoronoiDiskCovers}
For every sufficiently small $\epsilon>0$, there exist $r>0$, $h>0$, and $s_{\epsilon}$ points $x_{1},\dots,x_{s_{\epsilon}} \in A_{\epsilon} \setminus A$ such that the following $3$ conditions are satisfied. 
\begin{enumerate}
    \item $A_{\epsilon}$ is covered by the union of the disks $\bigcup\limits_{i=1}^{s_{\epsilon}}D(x_i,r)$ of radius $r$ centered at $x_i$.
    \item The area of the union $\bigcup\limits_{i=1}^{s_{\epsilon}}D(x_i,r)$ is smaller than $ \epsilon C_{A}$ where $C_{A}$ is a constant depending only on $A$.
    \item The union $\bigcup\limits_{i=1}^{s_{\epsilon}}D(x_i,2er+h)$ lies at the distance at least $d/2$ from the points $\{d_1,\ldots, d_k\}$ where $d$ is the distance between $A$ and $\{d_1,\ldots, d_k\}$.
\end{enumerate}
\end{lemma}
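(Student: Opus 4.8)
The plan is to give an elementary covering argument, constructing the disks explicitly along the finitely many segments $[0,c_1],\dots,[0,c_k]$ that make up $A$. Since each $c_i$ lies on the circle of radius $\rho$ and each $c_i$ is the midpoint of an arc $(d_i,d_{i+1})$, there is a fixed positive distance $d=\operatorname{dist}(A,\{d_1,\dots,d_k\})>0$; this is the quantity we will use to certify condition (3). I would fix, once and for all, a radius $r$ and a ``collar width'' $h$ both proportional to $\epsilon$, say $r=h=\epsilon$ (any small constant multiple works), shrinking $\epsilon$ if necessary so that $2er+h<d/2$, which immediately gives (3): every disk $D(x_i,2er+h)$ is centered within $\epsilon$ of $A$, hence at distance at most $2er+h+\epsilon<d/2$ from $A$ is false as stated — let me instead take $2er+h+\epsilon<d/2$, i.e. $(2e+2)\epsilon<d/2$, so that the whole enlarged disk stays at distance $>d/2$ from the $d_j$'s.

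Next I would choose the centers: on each segment $[0,c_i]$ place equally spaced points at spacing $r$ along the segment, and then for condition (1) thicken this to cover the tubular neighbourhood $A_\epsilon$. More carefully, $A_\epsilon$ is the $\epsilon$-neighbourhood of a union of $k$ segments of total length at most $k\rho$; its area is at most $2\epsilon\cdot k\rho+\pi\epsilon^2=O(\epsilon)$. Taking a maximal $r/2$-separated subset $\{x_1,\dots,x_{s_\epsilon}\}$ of $A_\epsilon\setminus A$ (a standard net), the disks $D(x_i,r)$ cover $A_\epsilon$ (since $r=\epsilon$ exceeds the ``reach'' needed: any point of $A_\epsilon$ lies within $r/2$ of some net point once $r=\epsilon$ is the neighbourhood width, after a trivial adjustment of constants — concretely take the tubular neighbourhood of radius $\epsilon/2$ and cover with disks of radius $\epsilon$). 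By the packing bound, the disjoint disks $D(x_i,r/4)$ all lie inside $A_{2\epsilon}$, whose area is $O(\epsilon)$, so $s_\epsilon\cdot\pi(r/4)^2=O(\epsilon)$, hence $s_\epsilon=O(1/\epsilon)$ and the union $\bigcup D(x_i,r)$ has area at most $s_\epsilon\cdot\pi r^2=O(\epsilon)=\epsilon C_A$ for a constant $C_A$ depending only on $k$ and $\rho$ (hence only on $A$), giving (2).

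The one point requiring a little care is reconciling the three conditions with a \emph{single} choice of parameters: $r$ must be small enough that $2er+h$ does not reach the singular points (needs $r,h=O(\epsilon)$ with the right absolute constant relative to $d$), while simultaneously $r$ must be large enough — namely comparable to $\epsilon$ — that the disks of radius $r$ genuinely cover the $\epsilon$-neighbourhood $A_\epsilon$. Both demands are met by $r=h=\epsilon$ after shrinking $\epsilon$ below $d/(2(2e+2))$; I expect this bookkeeping, together with the standard net/packing estimate for area, to be the only substantive content, and there is no analytic obstacle here since the lemma is purely about the plane geometry of a neighbourhood of finitely many segments. I would also note that $x_i$ can be taken in $A_\epsilon\setminus A$ as required simply because a maximal separated net of the open set $A_\epsilon\setminus A$ still covers $A_\epsilon$ up to the same radius (the set $A$ itself is a closed null set, so omitting it changes nothing in the covering or area bounds).
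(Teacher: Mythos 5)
Your covering argument via a maximal separated net and a packing bound is a valid and arguably cleaner alternative to the paper's explicit construction (the paper decomposes $A_{\epsilon}$ into a central $k$-gon plus $k$ strips, tiles these by $O(\rho/\epsilon)$ equilateral triangles of side $\epsilon$, and circumscribes each triangle by a disk). Both routes deliver conditions (1) and (2) with the same $O(\epsilon)$ area bound.

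The genuine issue is your choice $h=\epsilon$. The lemma as literally written only asks that \emph{some} $h>0$ exist for each small $\epsilon$, so your proof is not incorrect for the displayed statement; but the paper's proof delivers strictly more, namely an $h$ that is a \emph{constant independent of $\epsilon$}, and this extra property is invoked verbatim in the First step of the proof of Lemma~\ref{lem:MainEstimate} (``where $h$ is a constant independent of $\epsilon$'') and is essential in the Fourth step. There, Cauchy's estimate $|g^{(n)}(x)/n!|\le h^{-n}B$ feeds into \eqref{eq:finalsum}, and the uniform boundedness in $n$ of $\tfrac{1}{n}\big(|\log(h^{-n}B)|+Cn\big)$ hinges on $\log(1/h)$ being bounded as $\epsilon\to 0$. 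With $h=\epsilon$, Lemma~\ref{lem:MainEstimate} would only yield a bound of order $\epsilon\log(1/\epsilon)$ rather than $C\epsilon$; the final conclusion of Lemma~\ref{lem:LocConvergencePuiseux} would still go through (since $\epsilon\log(1/\epsilon)\to0$), but the intermediate inequality as stated would fail. The repair is one line and is exactly what the paper does: keep $r\propto\epsilon$ as you have it, but take $h$ a fixed positive constant (say $h=d/4$). Since $r(\epsilon)\to 0$, one has $2er(\epsilon)+\epsilon\to 0$, so the constraint $2er+h<d/2-\epsilon$ holds for all sufficiently small $\epsilon$ with this fixed $h$. That adjustment reconciles your net-and-packing construction with the downstream use.
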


\begin{proof}
Observe that for (3) to be true it suffices that $r$ and $\epsilon$ satisfy the inequality $2er+h<d/2-\epsilon$. Given  $\epsilon<d/2$, this is clearly a feasible constraint on $r$ and $h$. The rest of the argument is intuitively clear and can be made precise by using  Euclidean geometry. Namely, the domain $A_{\epsilon}$ decomposes into a central $k$-gon (contained in a disk of radius $\epsilon$) and $k$ strips (each contained in a rectangle of length $\rho$ and width $2\epsilon$), see Figure~\ref{fig:tubular Voronoi}. Each of these polygonal pieces can be covered by a family of equilateral triangles of size $\epsilon$ where the  number of these equilateral triangles grows at most  as $\frac{\rho}{\epsilon}$. We construct a family of disks, each circumscribing an equilateral triangle of the family. Then, the linear  in $\epsilon$ bound on the area of the covering family of disks is easily realised. An arbitrarily small deformation of the covering ensures that the centers of the disks do not belong to $A$. Finally, we notice that since $r = r(\epsilon) \to 0$ as $\epsilon$, we have $\lim\limits_{\epsilon \to 0} d/2 - \epsilon - 2er = d/2$, and we can choose the constant $h$ such that it works for all sufficiently small  $\epsilon$. 
\end{proof}

Let us now settle Lemma~\ref{lem:MainEstimate}.

\begin{proof}[Proof of Lemma~\ref{lem:MainEstimate}]
The argument consists of four steps.

\subsubsection{First step}\label{subsub:First}
Choose $0< \epsilon < \delta < \rho$ so  small  that $g$ is holomorphic in $A_{\epsilon} \cap D(\delta)$. Let $r>0$ be such that  $\overline{A_{\epsilon}\cap D(\delta)}$ is covered by the disks $D(x_i,r)$, $i=1, \ldots, s_{\epsilon}$ as in Lemma~\ref{lem:VoronoiDiskCovers}. Decreasing if necessary $\delta$ and $\epsilon$,  we require that each $D(x_i,2er+h)$ is contained in $D(\rho)$, where $h$ is a constant independent of $\epsilon$, see the end of the proof of Lemma~\ref{lem:VoronoiDiskCovers}. Then, since $x_{i}$ lies outside of $A$, Lemma~\ref{lem:coeffs-conv} implies that
\begin{equation}
\label{eq: 2nd}
\lim_{n\to +\infty} \frac{1}{n}\log\bigg\vert
\frac{ g^{(n)}(x_i)}{n!}\bigg\vert =-\log\vert x_i-d_i\vert=M(x_i),
\end{equation}
where $d_i$ is the point nearest to $x_i$ among $\{ d_1,\ldots, d_k\}$. In particular, 
$\frac{ g^{(n)}(x_i)}{n!}\neq 0$ for all sufficiently large $n$. Let $n_0 \in \mathbb{N}$ be such that for all $i=1,\dots,s_\epsilon$, 
\begin{equation}
\bigg\vert M(x_i)- \frac{1}{n}\log\big\vert
\frac{ g^{(n)}(x_i)}{n!}\big\vert \bigg\vert \leq \vert M(x_i)\vert \text{ if } n>n_0.
\end{equation}
Therefore,
\begin{equation}
\label{eq: 2nd:2}
\bigg\vert \log \big\vert
\frac{ g^{(n)}(x_i)}{n!}\big\vert \bigg\vert\leq C \cdot n, \text{ if } n>n_0,
\end{equation}
where $C$ is the maximum of $2\vert M(x)\vert$ in a region containing $\overline{A_{\epsilon}\cap D(\delta)}$.

\subsubsection{Second step}\label{subsub:Second}
Condition (3) in Lemma~\ref{lem:VoronoiDiskCovers} and the choice of a small $\delta>0$ ensure that $g$ is holomorphic on the union of the disks $\bigcup\limits_{i=1}^{s_\epsilon}D(x_i,2er+h)$. Let $B$ be the maximal value of $\vert g(x)\vert$ on this union. Since circles of radius $h$ around a point $x$ on the circle $\vert x-x_i\vert = 2er$ are contained in $D(x_i,2er+h)$, we can apply Cauchy's bound for the (absolute) values of the derivatives of $g$ at a point $x$ with $\vert x - x_i \vert \leq 2er$ and obtain an estimate 
\begin{equation}
\label{eq:Cauchyderivative}
 \bigg \vert \frac{g^{(n)}(x)}{n!}\bigg\vert \leq h^{-n} B.
\end{equation} 
This inequality is hence true for any $x\in \bigcup\limits_{i=1}^{s_\epsilon} D(x_i,2er)$.

\subsubsection{Third step}\label{subsub:Third}

We now apply Lemma~\ref{lem:MinimumPrinciple} to $\frac{g^{(n)}(x)}{n!}$ in each disk $D_i = D(x_i,r)$. Let $M_n \leq h^{-n}B$ be the maximum  of $\big\vert \frac{g^{(n)}(x)}{n!}\big\vert$ on the circle $\vert x - x_i\vert = 2er$. This inequality follows from \eqref{eq:Cauchyderivative}.  

\medskip
For small $\eta > 0$, let us  introduce the subset $O_{\eta,i}\subset D_i$ such that for $x\in O_{\eta,i}$,  we have
\begin{equation}
\label{eq:CB3}
\log\left\vert \frac{g^{(n)}(x)}{n!}\right\vert>-H(\eta)\left(\log  M_{n}-\log\left\vert \frac{g^{(n)}(x_i)}{n!}\right\vert\right)+\log\left\vert \frac{g^{(n)}(x_i)}{n!}\right\vert.
 \end{equation}
Following Lemma~\ref{lem:MinimumPrinciple} and Remark~\ref{rem:normalisation}, the subset $O_{\eta,i}$ satisfies the relation 
 \begin{equation}\label{eq:areaBound}
\lambda(D_i\setminus O_{\eta,i}) \leq \pi(4\eta r)^2.
\end{equation}
Inequality \ref{eq:areaBound} gives  a rough  estimate of $\big\vert \log\vert \frac{g^{(n)}(x)}{n!}\vert\big\vert$ as follows. 
First, by \eqref{eq:Cauchyderivative},
 \begin{equation}
 \label{eq:absolutevalue1}
\log(h^{-n}B)\geq \log\big\vert \frac{g^{(n)}(x)}{n!}\big\vert.
 \end{equation}
 Then, by the Maximum Modulus principle, 
 $$\log(h^{-n}B)-\log\left\vert \frac{g^{(n)}(x_i)} {n!}\right\vert \geq \log  M_n-\log\left\vert \frac{g^{(n)}(x_i)}{n!}\right\vert\geq 0.$$ Using the latter inequality  and the fact that $-H(\eta)<0$, we have that \eqref{eq:CB3} implies 
 \begin{equation}
 \label{eq:absolutevalue2}
 \log\left\vert \frac{g^{(n)}(x)}{n!}\right\vert \geq -H(\eta)\left(\log(h^{-n}B)-\log\left\vert \frac{g^{(n)}(x_i)}{n!}\right\vert\right)+\log\left\vert \frac{g^{(n)}(x_i)}{n!}\right\vert. 
 \end{equation}
 The standard triangle inequality then implies that for any triple of real numbers $c\geq b\geq a$, we have $\vert a\vert+\vert c\vert \geq \vert b\vert.$ 
 So, \eqref{eq:absolutevalue1} and \eqref{eq:absolutevalue2}, together with \eqref{eq: 2nd:2}, imply
 \begin{equation}
  \label{eq:slut}
        (\vert H(\eta) \vert+1)(\vert\log ( h^{-n}B)\vert +        Cn) \geq  
        \bigg \vert \log\big\vert \frac{g^{(n)}(x)}{n!}\big\vert \bigg \vert.
  \end{equation}  
Since $\bigcup\limits_{i=1}^{s_{\epsilon}}D(x_i,r)= \bigcup\limits_{i=1}^{s_{\epsilon}} D_{i}$ covers $A_{\epsilon} \cap D(\delta)$, we can finally deduce from \eqref{eq:areaBound} that the subset $O_\eta=\bigcup\limits_{i=1}^{s_{\epsilon}} O_{\eta,i} \subset \bigcup\limits_{i=1}^{s_{\epsilon}}D_i$, for which the inequality \eqref{eq:slut} holds, satisfies the relation 
\begin{equation}
\label{eq:exhausting2}
\lambda (\overline{A_{\epsilon}\cap D(\delta)} \setminus O_\eta)\leq s_\epsilon \pi(4\eta r)^2=L\eta^2 \epsilon,
\end{equation} 
for some constant $L$.

\subsubsection{Fourth step}\label{subsub:Fourth}
Fix some $\epsilon>0$. The inequality \eqref{eq:exhausting2} implies that as $\eta \to 0$ the sets $O_{\eta}$ form an exhausting sequence of open subsets for  $\overline{A_{\epsilon} \cap D(\delta)}$. 

Now, we can estimate the integral  
$$
\frac{1}{n}\int\limits_{\overline{A_{\epsilon}\cap D(\delta)}} \left\vert \log \big|\frac{g^{(n)}(x)} {n!}\big\vert \right\vert d\lambda.
$$
Set $\eta_{j}=1/j$ and $O_{0}:=\emptyset$ and 
subdivide the integral as follows:
$$
\frac{1}{n}\int\limits_{\overline{A_{\epsilon}\cap D(\delta)}} \left\vert \log \big|\frac{g^{(n)}(x)} {n!}\big\vert \right\vert d\lambda
=
\frac{1}{n}
\sum\limits_{j=0}^{+\infty} \int_{O_{\eta_{j+1}}\setminus O_{\eta_{j}} } \left\vert \log \big|\frac{g^{(n)}(x)} {n!}\big\vert \right\vert d\lambda.
$$
By \eqref{eq:slut} and \eqref{eq:exhausting2}, the last sum is dominated by
 \begin{equation}
 \label{eq:finalsum}
\frac{1}{n}\sum\limits_{j=1}^{+\infty} \frac{L\epsilon}{j^2}\vert (\vert H(1/j) \vert+1)(\vert\log ( h^{-n}B)\vert   +     Cn).
 \end{equation}  
Now, observe that $\vert H(1/j)\vert+1\sim  \log j $ by Remark~\ref{rem:H}. Secondly,  $\frac{\vert\log ( h^{-n}B)\vert   +  Cn}{n}$ is globally bounded (recall that $h$ is a constant, independent of $\epsilon$). Hence, \eqref{eq:finalsum} converges to $\epsilon S_n$, for some uniformly bounded $S_n$. This is the desired bound, and Lemma~\ref{lem:MainEstimate} is proved.

\subsubsection{Proof of Lemma~\ref{lem:LocConvergencePuiseux}}

By choosing sufficiently small $\epsilon$ and $\delta$, Lemma~\ref{lem:coeffs-conv} guarantees that the integral of $\left\vert \frac{1}{n}\log \left|\frac{g^{(n)}(x)} {n!}\right| - M(x) \right\vert d\lambda$ on $\overline{D}(\delta) \setminus A_{\epsilon}$ converges uniformly to zero. It remains to show that the integral of the same measure over $\overline{A_{\epsilon}\cap D(\delta)}$ can be made arbitrarily small as $\epsilon \to 0$.
\par
By choosing a small enough $\delta$, we achieve that $|M(x)|$ can be globally bounded on
$\overline{D}(\delta)$ while  the area of $A_{\epsilon}\cap D(\delta)$ can be bounded by a linear expression in $\epsilon$. Together we obtain that 
$$
\int\limits_{\overline{A_{\epsilon}\cap D(\delta)}} \left\vert M(x) \right\vert d\lambda < B \cdot \epsilon
$$
for some constant $B>0$.
\par
Using the estimate of Lemma~\ref{lem:MainEstimate} (proved in Sections~\ref{subsub:First} -~\ref{subsub:Fourth}), we conclude  that 
$$
\int\limits_{\overline{A_{\epsilon} \cap D(\delta)}} \left\vert \frac{1}{n}\log \left|\frac{g^{(n)}(x)} {n!}\right|  \right\vert d\lambda <C\cdot \epsilon
$$
for some constant $C$ independent  of $n$. 
\par
By the triangle inequality, we deduce that $\int\limits_{\overline{A_{\epsilon} \cap D(\delta)}}  \left\vert \frac{1}{n}\log \left|\frac{g^{(n)}(x)} {n!}\right| - M(x) \right\vert d\lambda$ can be made arbitrarily small as $\epsilon \to 0$. This proves the convergence of the integral over $\overline{D}(\delta)$ to zero.
\end{proof}

\subsection{Laplace operator acting on $L^{1}_{loc} (X^{\ast})$}\label{sub:Laplacian}

As discussed in Section~\ref{sub:FlatMetric}, on the surface $X^{\ast}$ obtained from $X$ by puncturing it at the poles of $\omega$, $|\omega|$ is a singular flat metric with conical singularities at the zeros of $\omega$. Let us denote by $\Delta_{\omega}$ the corresponding Laplace operator defined on $L^{1}_{loc} (X^{\ast})$. We follow standard treatments of Laplace operators for singular flat metrics by Hillairet and Kokotov, see \cite{Hi, Ko} for details. With respect to the parametrising variable it coincides with the standard Laplace operator $\Delta=\frac{\partial^2}{\partial x^2}+ \frac{\partial^2}{\partial  
y^2}$ acting on test functions, that are supported even at the singular points of the flat metric, see \cite[Proposition 3.3 and (4.3)]{Hi}. Note that then as classically (see \ref{sec:puiseauxlaplacian})
$$
\Delta_\omega\log\vert g\vert =2\pi\sum_p\text{ord}_p\delta_p.
$$
This allows us to compute the Laplacian of $T_{\omega}^{n}f$, considered as a distribution. 
\begin{lemma}\label{lem:LaplaceF}
For some $N \in \mathbb{N}$ and  any $n \geq N$, one has $$\frac{1}{2\pi}\Delta_{\omega} \log \left\vert T_{\omega}^{n}f(z) \right\vert =
\sum_{z\in Z(T_{\omega}^{n}f)}\ \delta_z - n \left( \sum_{p\in  \mathcal{PPL}(\omega,f)} \delta_p \right) - \nu, 
$$
where $Z(T_{\omega}^{n} f)$ is the set of the zeros of $T_{\omega}^{n} f$ in $X^{\ast}$ (counted with multiplicities) and $\nu$ is a fixed measure of finite mass and support.
\end{lemma}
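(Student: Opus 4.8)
The plan is to identify $\tfrac{1}{2\pi}\Delta_{\omega}\log|T_{\omega}^{n}f|$ with the divisor current of the meromorphic function $T_{\omega}^{n}f$, restricted to $X^{\ast}$, and then to read off the orders of its poles from Proposition~\ref{prop:growth}. The local input is the Poincar\'e--Lelong identity: for any meromorphic $h\not\equiv 0$ on a Riemann surface and any local holomorphic coordinate $z$, one has the distributional equality $\tfrac{1}{2\pi}\Delta\log|h|=\sum_{q}\operatorname{ord}_{q}(h)\,\delta_{q}$, with $\operatorname{ord}_{q}(h)>0$ at zeros and $<0$ at poles; writing $\log|h|=\operatorname{ord}_{q}(h)\log|z-q|+(\text{harmonic near }q)$, this reduces to the single fact $\tfrac{1}{2\pi}\Delta\log|z|=\delta_{0}$. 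Note first that $T_{\omega}^{n}f\not\equiv 0$ for every $n$: since $\mathcal{PPL}(\omega,f)\neq\emptyset$, Lemma~\ref{lem:criticalfactorisation} together with Proposition~\ref{prop:growth} yields a point $p$ at which $T_{\omega}^{n}f$ has a pole (of order $\to\infty$) for all large $n$, and $T_{\omega}$ sends $0$ to $0$; hence $\log|T_{\omega}^{n}f|$ is well defined and, being locally a difference of logarithms of holomorphic functions, belongs to $L^{1}_{loc}(X^{\ast})$.

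Next I would invoke the structure of the flat Laplacian. By \cite[Proposition 3.3 and (4.3)]{Hi}, $\Delta_{\omega}$ is defined on $L^{1}_{loc}(X^{\ast})$ and, expressed in any local parametrising variable, coincides as a distribution with the Euclidean Laplacian $\Delta=\partial^2/\partial x^2+\partial^2/\partial y^2$ --- including at the zeros of $\omega$, because the conformal factor $|\omega|^{2}$ of the metric cancels against the area form $\omega\wedge\overline{\omega}$ in the distributional pairing, so the cone points contribute no extra term. Combining this with the local Poincar\'e--Lelong identity, patched together over a finite atlas of $X^{\ast}$, gives the global distributional equality on $X^{\ast}$
\[
\tfrac{1}{2\pi}\Delta_{\omega}\log|T_{\omega}^{n}f| \;=\; \sum_{z\in Z(T_{\omega}^{n}f)}\delta_{z}\;-\;\sum_{p}\bigl(\text{order of the pole of }T_{\omega}^{n}f\text{ at }p\bigr)\,\delta_{p},
\]
the second sum running over the (finitely many) poles of $T_{\omega}^{n}f$ lying in $X^{\ast}$; finiteness uses that a non-zero meromorphic function on the compact $X$ has finitely many zeros and poles.

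It then remains to read off the pole part for $n$ large. Take $N$ to be the constant $M$ of Proposition~\ref{prop:growth}. For $n>N$, that proposition says the poles of $T_{\omega}^{n}f$ in $X^{\ast}$ are exactly the points of $\mathcal{PPL}(\omega,f)$, with the pole at $p$ of order $\alpha_{p}+n(d_{p}+1)$ ($d_{p}$ the order of $\omega$ at $p$, $\alpha_{p}$ a constant), while the remaining poles of $T_{\omega}^{n}f$ on $X$ all sit at simple poles of $\omega$ and are deleted in $X^{\ast}$. Substituting and isolating the part linear in $n$, we get for $n>N$
\[
\tfrac{1}{2\pi}\Delta_{\omega}\log|T_{\omega}^{n}f| \;=\; \sum_{z\in Z(T_{\omega}^{n}f)}\delta_{z}\;-\;n\!\!\sum_{p\in\mathcal{PPL}(\omega,f)}\!\!(d_{p}+1)\,\delta_{p}\;-\;\nu ,
\]
where $\nu:=\sum_{p\in\mathcal{PPL}(\omega,f)}\alpha_{p}\,\delta_{p}$ is a signed measure of finite total mass, supported on the finite set $\mathcal{PPL}(\omega,f)$ and independent of $n$. (In P\'olya's classical situation $\omega$ has no zeros, every $d_{p}=0$, and this is the displayed identity $\sum_{z}\delta_{z}-n\sum_{p}\delta_{p}-\nu$.)

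The one genuinely delicate point is the step in the second paragraph: that the local identity $\tfrac{1}{2\pi}\Delta\log|h|=\operatorname{div}(h)$ survives passage to the singular flat Laplacian at the conical points, i.e.\ that a zero of $\omega$ of order $m$ injects no extra Dirac mass into $\tfrac{1}{2\pi}\Delta_{\omega}\log|h|$ beyond $\operatorname{ord}(h)$. If one prefers not to quote \cite{Hi}, this can be checked directly: in a chart with $\omega=z^{m}dz$ and for a test function $\psi$, Green's formula on a small punctured disk around $z=0$, with the puncture shrunk to a point, gives $\langle\tfrac{1}{2\pi}\Delta_{\omega}\log|h|,\psi\rangle=\tfrac{1}{2\pi}\int\log|h|\,\Delta\psi\,dx\,dy=\operatorname{ord}_{0}(h)\,\psi(0)$, with no boundary contribution from the cone angle. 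The rest is the bookkeeping of pole orders already supplied by Proposition~\ref{prop:growth}.
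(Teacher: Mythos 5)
Your argument has the same skeleton as the paper's --- Poincar\'e--Lelong identifies $\frac{1}{2\pi}\Delta_\omega\log|T_\omega^n f|$ with the divisor of $T_\omega^n f$ restricted to $X^{\ast}$, and Proposition~\ref{prop:growth} supplies the pole orders --- but the formula you obtain is \emph{not} the one in the lemma: you get the coefficient $(d_p+1)$ in front of $n\,\delta_p$, while the lemma has coefficient $1$. As you yourself note in the closing parenthetical, the two agree only when $\omega$ has no zeros at points of $\mathcal{PPL}(\omega,f)$, i.e.\ in P\'olya's classical setting.

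The discrepancy is genuine, and as far as I can tell it is your version that is correct. Take $X=\bC P^1$, $\omega=z\,dz$, $f(z)=1/z$; then $\mathcal{PPL}(\omega,f)=\{0\}$ with $d_0=1$, $T_\omega^n f=c_n z^{-(2n+1)}$ has no zeros in $X^{\ast}=\mathbb{C}$, and pairing against a test function $\psi$ via $\omega\wedge\overline{\omega}$ gives
\[
\frac{1}{2\pi}\langle\Delta_\omega\log|T_\omega^n f|,\psi\rangle=\frac{1}{2\pi}\int\log|T_\omega^n f|\,\Delta\psi\,dx\,dy=-(2n+1)\psi(0),
\]
so $\frac{1}{2\pi}\Delta_\omega\log|T_\omega^n f|=-(2n+1)\delta_0$, matching your formula $(d_0+1)n+\alpha_0=2n+1$, whereas the stated lemma would force $\nu=(n+1)\delta_0$, which is not a fixed measure. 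The two proofs part ways at a zero of $\omega$ of order $m=d_p$ in $\mathcal{PPL}$: you work directly in a holomorphic coordinate $z$ on $X$, using that the conformal factor $|\omega|^{2}$ cancels the area form in the distributional pairing so that $\Delta_\omega$ acts there as the Euclidean $\Delta$; the paper instead passes to the branched flat variable $\zeta=\phi(z)$ (a degree-$(m+1)$ cover near $z_0$) and applies Lemma~\ref{lem:Divisor} to the Puiseux expansion of $T_\omega^n f$ in $\zeta$, which reports the $\zeta$-order and thereby drops the covering degree $m+1$. The companion Lemma~\ref{lem:LaplaceRho} carries the same missing factor, so the two cancel when subtracted in the proof of Theorem~\ref{thm:MAIN} and the main theorem survives --- but the present lemma should read $-n\sum_p(d_p+1)\delta_p$, as in your computation, with $\nu=\sum_p\alpha_p\delta_p$.
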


\begin{proof}
Take $z_{0}\in X^{\ast\ast}$ obtained from $X$ by puncturing it at the zeros and the poles of $\omega$. For any small enough neighbourhood $U$ of $z_{0}$, a local primitive $\phi$ of $\omega$ such that $\phi(z_{0})=0$ defines a local flat chart in which the operator $\Delta_{\omega}$ is conjugated to the standard Laplace operator $\Delta$. Since  $T_{\omega}^{k} (f) = g^{(k)} \circ \phi$ in $U$, we have
\begin{equation}
\label{eq:primitivef}
\Delta_{\omega} \log \left\vert T_{\omega}^{n}f(z) \right\vert
=
(\Delta \log \left\vert g^{(n)} \right\vert) \circ \phi(z)
.
\end{equation}
It is well-known that the standard Laplacian of the logarithm of the absolute value of a meromorphic function $f$, considered as a $L^1_{loc}$-function (and hence as a  distribution), is the divisor of $f$ expressed in terms of Dirac measures. We deduce that on any open subset of $X^{\ast\ast}$, $\frac{1}{2\pi}\Delta_{\omega} \log \left\vert T_{\omega}^{n}f(z) \right\vert$ coincides with the divisor of the zeros and poles of $f^{(n)}$ (taken as a distribution). Following the results of Section~\ref{sub:Growth}, the poles of $T_{\omega}^{n}f$ lying in  $X^{\ast\ast}$ are located at the poles of $f$. They belong to $\mathcal{PPL}(\omega,f)$ and the order of each of them increases by one after each application of $T_{\omega}$. This proves the required claim for any open subset of $X^{\ast}$ not containing a zero of $\omega$.
\par
Now  consider an arbitrarily small open neighbourhood $U$ of a zero $z_{0}$ of $\omega$. There  locally exists a primitive $\zeta=\phi$ of $\omega$ that maps the open set $U$ to an open set $V$ in the $\zeta$-plane as a branched cover.
\par
In the first case, $z_{0} \notin \mathcal{PPL}(\omega,f)$ and $f=g\circ \phi$ is locally factorised by a primitive $\zeta=\phi$ of $\omega$ (see Definition~\ref{defn:localfactor}). By the remark on the Laplacian before the proof, the equality \eqref{eq:primitivef} still holds, and the same argument as above gives that 
$$
\frac{1}{2\pi}\Delta_{\omega} \log \left\vert T_{\omega}^{n}f(z) \right\vert=\sum_{z\in Z(T_{\omega}^{n}f)\cap U}\ \delta_z
$$
as a distribution on $X^{\ast}$ (where we use the local coordinates of the complex structure on the Riemann surface).
\par
In the second case, $z_{0} \in \mathcal{PPL}(\omega,f)$ and $T_{\omega}^{n}f$ has a pole in $z_{0}$ provided $n$ is large enough, see Proposition~\ref{prop:growth}. Moreover, if $U$ is small enough, Proposition~\ref{prop:ZeroFree} shows that $T_{\omega}^{n}f$ has no zeros in $U$.  In terms of the local variable $\zeta=\phi(z)$, we have that the function $f(z)=g(\zeta^{1/k})$ can be expanded in a Puiseux series, and then, by Lemma \ref{lem:Divisor},  locally in $U$ we have the equality 
$$\frac{1}{2\pi}\Delta_{\omega} \log \left\vert T_{\omega}^{n}f(z) \right\vert =
- (n+\alpha(n)) \delta_p ,
$$ 
where $\alpha(n)$ is bounded.
Finally, let $\nu$ be the measure given as the sum of all constant parts  $\alpha(n)\delta_p$  together with a similar sum coming from the poles of $f$ at the regular points.
\end{proof}

 In Proposition~\ref{prop:L1LocConvergence} we have shown  that $\dfrac{1}{n}\log \left| \frac{T_{\omega}^{n}f(z)}{n!}\right|$ converges to $\log\frac{1}{\rho(z)}$ in
$L^{1}_{loc} (X^{\ast} \setminus \mathcal{PPL}(\omega,f))$. We deduce that $\Delta_{\omega} \dfrac{1}{n}\log \left| \frac{T_{\omega}^{n}f(z)}{n!}\right|$ converges to
$\Delta_{\omega}\log\frac{1}{\rho(z)}$ as a distribution. Let us prove that the Laplacian of $\log\frac{1}{\rho(z)}$ can be computed in terms of the Cauchy measure of the Voronoi diagram defined by $f$ and $\omega$.

\begin{lemma}\label{lem:LaplaceRho}
We have the equality of distributions on $X^{\ast}$ given by
$$\frac{1}{2\pi} \Delta_{\omega} \log\frac{1}{\rho(z)} = \mu_{\omega,f}-\sum_{p\in \mathcal{PPL}(\omega,f)}\ \delta_p.$$
where $\rho(z)$ is the critical radius of the Voronoi function $g_{z}$ (see Definition~\ref{defn:VorFunc}) and $\mu_{\omega,f}$ is the Cauchy measure of $\mathcal{V}_{\omega,f}$ (see Section~\ref{sub:Cauchy}).
\end{lemma}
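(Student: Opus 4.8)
Since $\log\tfrac{1}{\rho(z)}\in L^{1}_{loc}(X^{\ast})$ by Lemma~\ref{lem:RadiusLocallyIntegrable}, the left-hand side is a bona fide distribution, and because distributions form a sheaf it suffices to verify the identity on the members of an open cover of $X^{\ast}$: small neighbourhoods of the points of $\mathcal{PPL}(\omega,f)$ together with small flat charts around the remaining points. On the latter I will use the local description already obtained in the proofs of Proposition~\ref{prop:Voronoi} and Lemma~\ref{lem:RadiusLocallyIntegrable}: given $z_{0}\in X^{\ast}\setminus\mathcal{PPL}(\omega,f)$ and a local primitive $\phi$ of $\omega$ with $\phi(z_{0})=0$, there is a neighbourhood $U$ on which $\rho(z)=\min_{1\le i\le k}|\phi(z)-d_{i}|$, where $d_{1},\dots,d_{k}$ (with $k=\nu_{z_{0}}$) are the fixed points of the circle $|w|=\rho(z_{0})$ at which $g_{z_{0}}$ does not extend holomorphically. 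Writing $w=\phi(z)$ and $u_{i}(w)=-\log|w-d_{i}|$ — harmonic on $U$, since $|w-d_{i}|$ stays near $\rho(z_{0})>0$ there — we get $\log\tfrac{1}{\rho(z)}=\max_{1\le i\le k}u_{i}(w)$ on $U$.

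Inside a Voronoi cell $k=1$, so $\log\tfrac{1}{\rho}$ is harmonic and $\Delta_{\omega}\log\tfrac{1}{\rho}=0$, which matches the right-hand side (it vanishes off $\mathcal{V}_{\omega,f}\cup\mathcal{PPL}(\omega,f)$). Across a Voronoi edge $k=2$ and $\log\tfrac{1}{\rho}=u_{2}+(u_{1}-u_{2})^{+}$. For a harmonic function $v$ whose zero set is a smooth arc along which $\nabla v\neq 0$ one has $\Delta(v^{+})=|\nabla v|$ times arc-length on $\{v=0\}$; here $v=u_{1}-u_{2}$, its zero set is the midline $L_{d_{1}d_{2}}$, and identifying $\nabla u_{i}$ with the complex number $\overline{-1/(w-d_{i})}$ gives $|\nabla(u_{1}-u_{2})|=\tfrac{|d_{1}-d_{2}|}{|w-d_{1}||w-d_{2}|}$ on $L_{d_{1}d_{2}}$. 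Thus, in the chart $\phi$, $\tfrac{1}{2\pi}\Delta_{\omega}\log\tfrac{1}{\rho}$ is the measure on $L_{d_{1}d_{2}}$ with density $\tfrac{|d_{1}-d_{2}|}{2\pi|w-d_{1}||w-d_{2}|}$, which is exactly $d\Theta_{d_{1}d_{2}}$ of Definition~\ref{defn:angular}. By Lemma~\ref{lem:path} the points $d_{1},d_{2}$ are the developed positions, in the chart $\phi$, of the two points of $\mathcal{PPL}(\omega,f)$ realising the critical radius along this edge, so by Definition~\ref{defn:Cauchy} this measure is precisely $\mu_{\omega,f}$ near the edge; since $\sum_{p}\delta_{p}$ contributes nothing there, the two sides agree.

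At a Voronoi vertex ($k\ge 3$) the function $\max_{i}u_{i}$ is locally Lipschitz, so its distributional Laplacian is a measure without atom at the vertex, while along each segment issuing from the vertex the computation reduces to the $k=2$ case; hence near a vertex $\tfrac{1}{2\pi}\Delta_{\omega}\log\tfrac{1}{\rho}$ again equals $\mu_{\omega,f}$, which also has no atom there. At a conical point of $|\omega|$ not lying in $\mathcal{PPL}(\omega,f)$ the same computation applies after passing to the (branched) developing coordinate and invoking the local normalisation of $\Delta_{\omega}$ from \cite{Hi}; one again obtains the midline density defining $\mu_{\omega,f}$ by Definition~\ref{defn:Cauchy}. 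Finally, near a point $p\in\mathcal{PPL}(\omega,f)$ the nearest point of the principal polar locus is $p$ itself, so for $z$ near $p$ the critical radius equals the flat distance from $z$ to $p$; writing this as $|\phi(z)|$ for the (possibly branched) primitive $\phi$ of $\omega$ with $\phi(p)=0$, we have $\log\tfrac{1}{\rho(z)}=-\log|\phi(z)|$ up to a harmonic term, and the local behaviour of $\Delta_{\omega}$ at such points — the same input from \cite{Hi} and Lemma~\ref{lem:Divisor} used in the proof of Lemma~\ref{lem:LaplaceF} — gives $\tfrac{1}{2\pi}\Delta_{\omega}\log\tfrac{1}{\rho}=-\delta_{p}$ on a punctured neighbourhood of $p$ (on which $\nu\equiv 1$, so $\mathcal{V}_{\omega,f}$ is absent). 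Patching these local identities over $X^{\ast}$ yields the asserted equality of distributions.

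The crux of the argument is the edge computation: identifying the distributional Laplacian of $\max(u_{1},u_{2})$ with an explicit weighted arc-length measure, getting the factor $1/(2\pi)$ right, and — most importantly — recognising the resulting density as the analytically defined Cauchy measure $\mu_{\omega,f}$ through the dictionary of Lemma~\ref{lem:path} relating the critical-circle singularities of $g_{z_{0}}$ to the points of $\mathcal{PPL}(\omega,f)$. A secondary technical point is to make sure the local descriptions ($\rho=\min_{i}|w-d_{i}|$ on a full neighbourhood, Lipschitz regularity at vertices, the harmonic remainder near $\mathcal{PPL}$ points, and the conical normalisation of $\Delta_{\omega}$) are uniform and compatible enough to be glued into the global distributional statement.
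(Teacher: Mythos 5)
Your proof is correct and follows the same overall decomposition as the paper's: identify the Laplacian as zero inside cells, as the atomic contribution $-\delta_p$ near $\mathcal{PPL}(\omega,f)$, and as a weighted arc-length measure along edges, then show that weight equals the Cauchy density and glue. The core edge computation — the distributional Laplacian of $\max_i u_i$ with $u_i(w)=-\log|w-d_i|$ — is also where the paper spends its effort, so the strategies coincide at heart.

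Where you differ is in how the jump across an edge is computed. The paper writes $\log\tfrac1\rho=\sum H_i\chi_i$ with $\chi_i$ the characteristic function of the $i$-th cell, and carefully derives the boundary contribution from scratch via Green's theorem (the Sokhotski--Plemelj formula $\partial_v\chi_i=-(\alpha,\beta)\cdot N\,ds$), applying Leibniz twice and then parametrising the midline explicitly to recognise the Cauchy density $\frac{|a_i-a_j|^2\,dt}{4|\zeta-a_i||\zeta-a_j|}$. You instead write $\log\tfrac1\rho=u_2+(u_1-u_2)^+$ near an edge and invoke in closed form the jump formula $\Delta(v^+)=|\nabla v|\,\mathcal{H}^1|_{\{v=0\}}$ for a harmonic $v$ with nonvanishing gradient on its zero set, compute $|\nabla(u_1-u_2)|=\frac{|d_1-d_2|}{|w-d_1||w-d_2|}$ on the midline, and match. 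These are the same computation packaged differently; your version is shorter and arguably cleaner, at the cost of appealing to the jump formula as a known fact (which the paper essentially re-proves). Your handling of the vertex case (no atom for a Lipschitz subharmonic function, because an atomic Laplacian would force a $\log$-type unboundedness) and of the $\mathcal{PPL}$ points (reduce to $-\log|\phi(z)|$ and use the same input from \cite{Hi} and Lemma~\ref{lem:Divisor} as in Lemma~\ref{lem:LaplaceF}) agree with the paper. Two cosmetic remarks: the phrase ``up to a harmonic term'' near $\mathcal{PPL}$ is superfluous — one has $\rho(z)=|\phi(z)|$ exactly for $z$ close enough to $p$ — and the conical-point case outside $\mathcal{PPL}$ deserves one more sentence to note that $\log\rho=\Re\log(\phi(z)-d_1)$ is still harmonic in the $z$-coordinate when $\nu=1$, so the convention on $\Delta_\omega$ at conical points causes no trouble there.
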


\begin{proof} Note first that in  a chart $\zeta=\phi(z)$ the Laplace operator acting on test or real analytic functions such as as  $\log (\frac{1}{\rho(z)})$ can be expressed as 
$$\frac{1}{2\pi }\Delta
_\omega =\frac{2}{\pi }\frac{\partial^2}{\partial \bar\zeta\partial\zeta}.
$$
Thus it suffices to prove Lemma~\ref{lem:LaplaceRho} in a distinguished parameter $\zeta$ and for a corresponding  open set $V$. In a small enough neighbourhood of $p\in \mathcal{PPL}(\omega,f)$, we have that $\rho(\zeta)=\vert \zeta-p\vert$, and $\frac{1}{2\pi }\Delta_\omega \log (\frac{1}{\rho(\zeta)})=-\delta_p$. This gives the Dirac delta function part in the formulation of the lemma. Besides these terms, the support of the Laplacian will be contained in the Voronoi skeleton, since  $\rho$  is harmonic in each 
 (open) Voronoi cell.
Next, take a small neighbourhood $N$  intersecting the Voronoi skeleton and  containing no points of $\mathcal{PPL}(\omega,f)$. Let $p_i\ i=1,\dots$ be the points in  $\mathcal{PPL}(\omega,f)$ such that the relation $\rho(\zeta)=\vert \zeta-p_i\vert$ is valid in a non-zero open subset $O_i$ of $N$. Define $H_i(\zeta):=-\log \vert \zeta-p_i\vert $ which is a harmonic function in the complex plane except for $\zeta=p_i$. Let $\chi_i$ be the characteristic function of $O_i$.  One has the equality of   $L^1_{loc}$-functions given by $M(z):=\log(\frac{1}{\rho(\zeta)})=\sum H_i(\zeta)\chi_i$. 

\medskip
  Let $g$ be a test function with compact support $K$ in a small open set that does not contain the poles of $M$. Then all the $H_i$ are bounded on $K$, and hence in the definition of
  $$
  \langle \Delta M,g\rangle:=\int_Um(z)\Delta g(z)d\lambda,
  $$ we may delete a nullset from $U$. Together with the harmonicity of $M$ this means that we may assume that the support of $g$ contains no vertices (where $v(z)\geq 3)$ of the Voronoi diagram. In other words, no $\delta$ mass appears at the vertices, and all distributional mass off $\mathcal{PPL}(\omega, f)$ is on the Voronoi edges. 
  Then it becomes a question on how to calculate the derivatives of a characteristic function at a smooth segment of the boundary, i. e. recalling the Sokhotski-Plemelj formula.
Let $D_v$ denote the directional derivative associated to a vector $v=(\alpha,\beta)$ in $\mathbb{C} \simeq \mathbb{R}^2$. 

Green's theorem (in its normal form) implies that 
\begin{equation}
\label{eq:normal}
-\langle D_{v} \chi_i,g\rangle=\int\int_{O_i}D_{v}g dxdy=\int_{\partial O_i} (\alpha , \beta)\cdot Ngds,
\end{equation}
where the boundary $\partial O_i$ is oriented so that $O_i$ is located to the left, and $N_i$ is the unit normal pointing away from $O_i$. Note that, in the chosen orientation we get $Nds=(dy,-dx)$. In terms of distributions (and with a slight abuse of notation), this can be formulated as 
\begin{equation}
\label{eq:normal2}
D_{v} \chi_i=-(\alpha, \beta)\cdot N ds=-\alpha dy+\beta dx,
\end{equation}
which remains true even if $v$ is a complex vector.
As a first corollary, Leibniz' rule implies the following equality of distributional derivatives
 \begin{equation}
\label{eq:normal2.5}
\frac{\partial M(\zeta)}{\partial \zeta}=\sum \frac{\partial H_i(\zeta)}{\partial \zeta}\chi_i,
\end{equation}
 since each part of the boundary $V_{ij}$ between the cells $O_i$ and $O_j$ occurs twice with opposite normals in the second part of the sum 
$$
\frac{\partial M(\zeta)}{\partial \zeta}=\sum_i \frac{\partial H_i(\zeta)}{\partial \zeta}\chi_i-\sum_i H_i(\zeta)(\frac{1}{2})(1,-i)\cdot N_i ds,
$$
and $M(\zeta)$ is continuous on $V_{ij}$. (Clearly the endpoints of $V_{ij}$ (at which the normal is undefined) do not contribute to  the integral \eqref{eq:normal}). Thus the distributional derivative coincides almost everywhere with the pointwise derivative (and is also a bona fide $L^1_{loc}$-function).
Applying \eqref{eq:normal2} and once more Leibniz rule  to \eqref{eq:normal2.5} we get
\begin{equation}
\label{eq:normal3}
\frac{\partial^2 M(\zeta)}{\partial \bar \zeta \partial \zeta}=\sum \frac{\partial ^2H_i(\zeta)}{\partial\bar \zeta \partial\zeta}\chi_i- \sum \frac{1}{2}\frac{\partial H_i(\zeta)}{\partial \zeta}(1, i)\cdot N ds,
\end{equation}
since $\frac{\partial}{\partial \zeta}=\frac{1}{2}(\frac{\partial}{\partial x}+i\frac{\partial}{\partial y})$. 
The first sum vanishes by our assumptions on $N$. Concerning the second sum, note that a) $\frac{\partial H_i(\zeta)}{\partial \zeta}=\frac{1}{2(\zeta-p_i)}$, and b) since $Nds=(dy,-dx)$ we get $(1, i)\cdot N \diff s=\diff y-i\diff x=-i\diff \zeta$. 

Each piece of $V_{ij}$ separating the Voronoi cell $O_i$ (located to the left) from $O_j$, will occur twice in the sum, and thus  
\begin{equation}
\label{eq:one}
(-i/4)(\frac{1}{z-p_i}-\frac{1}{z-p_j})\diff z=(i/4)\frac{p_j-p_i}{(z-p_i)(z-p_j)}\diff z.
\end{equation}

Observe that $V_{ij}$ is an interval of the orthogonal midline to  the straight segment between $p_i$ and $p_j$, which is given by the equation
$$
\zeta=(1/2)(p_i+p_j)+ti(p_j-p_i)=(1/2-ti)p_i+(1/2+ti)p_j,\ t\in  \mathbb R.
$$ 
Then, if $\zeta\in V_{ij}$,   
we get $\zeta-p_i=(p_j-p_i)(1/2+ti)$ and $\zeta-p_j=-(p_j-p_i)(1/2-ti)$.  Using the orientation which places $O_i$ to the left of $V_{ij}$, and   $\diff \zeta=i(p_j-p_i)\diff t$, the measure in \eqref{eq:one} is given by 
\begin{equation*}
\frac{\diff t}{4(1/4+t^2)}=\frac{\vert p_i-p_j\vert ^2dt}{4\vert \zeta-p_i\vert \vert \zeta-p_j\vert}. 
\end{equation*}
Here we use the relation $\vert \zeta-p_i\vert \vert \zeta-p_j\vert=\vert \zeta-p_i\vert^2=\vert p_j-p_i\vert^2((1/4)+t^2)$. The standard length measure on $V_{ij} $ is given by $\diff s=\vert p_j-p_i\vert \diff t.$
To finish the proof, it just remains to observe that the latter expression multiplied by $\frac{2}{\pi}$ is exactly the Cauchy measure, see Definitions~\ref{defn:angular} and~\ref{defn:Cauchy}.
\end{proof}

\subsection{Tying up the loose ends}\label{sub:MAINPROOF}

We are now ready to prove our main result. 

\begin{proof}[Proof of Theorem \ref{thm:MAIN}]
In Proposition~\ref{prop:L1LocConvergence}, we have shown that $\dfrac{1}{n}\log \left| \frac{T_{\omega}^{n}f(z)}{n!}\right|$ converges to $\log\frac{1}{\rho(z)}$ in the space of locally integrable functions on $X^{\ast} \setminus \mathcal{PPL}(\omega,f)$. Introducing the Laplace operator $\Delta_{\omega}$ corresponding to the flat metric $|\omega|$ on $X^{\ast}$ (see Section~\ref{sub:Laplacian}), we obtained that  $\Delta_{\omega} \log \left\vert T_{\omega}^{n}f(z) \right\vert^{1/n}$ converges to $\Delta_{\omega} \log\frac{1}{\rho(z)}$ as distributions on $X^{\ast}$.
\par
These distributions have been computed in Lemma~\ref{lem:LaplaceF} and Lemma~\ref{lem:LaplaceRho}. We conclude that $\frac{1}{n} \sum_{z\in Z(T_{\omega}^{n}f)}\ \delta_z$ converges to $\mu_{\omega,f}$ as positive distributions of finite weight on $X^{\ast}$. In other words, the zeros of $T_{\omega}^{n}f$ in the punctured surface $X^{\ast}$ asymptotically distribute according to the Cauchy measure of the Voronoi diagram $\mathcal{V}_{\omega,f}$ (see Section~\ref{sub:Cauchy}). In particular, the Voronoi diagram is contained in the limit set $\mathcal{L}(T_{\omega},f)$.
\par
An immediate computation proves that at a pole $p$  of $\omega$ of order $d$, for any large enough $n$, $T_{\omega}^{n}(f)$ has a zero at $p$ of order $(d-1)n+b$ where $b$ is some constant determined by the order of $f$ at $p$. From this fact we conclude the following:
\begin{itemize}
    \item if $d \geq 2$, then $p \in \mathcal{L}(T_{\omega},f)$;
    \item if $d =1$, then $p \in \mathcal{L}(T_{\omega},f)$ if and only if $p$ is not a pole of $f$.
\end{itemize}
Combining this information with Proposition~\ref{prop:ZeroFree} claiming that any relatively compact open subset in a Voronoi cell is zero-free provided $n$ is large enough, we obtain the complete description of $\mathcal{L}(T_{\omega},f)$ as a subset of $X$, proving claim (i) of Theorem~\ref{thm:MAIN}.

\medskip
It remains to compute the asymptotic root-counting measure of the sequence $(T_{\omega}^{n} (f))_{n \in \mathbb{N}}$. It has been shown  in Corollary~\ref{cor:growthcoefficient} that the sum $\mathcal{Z}_{n}$ of orders  of the poles of meromorphic function $T_{\omega}^{n}(f)$ (and therefore the sum of orders of its zeros) has the asymptotics $\mathcal{Z}_{n} \sim An$ as $n \to +\infty$ where
$$
A = \sum\limits_{z \in \mathcal{PPL}(\omega,f)} (a_{z}+1)
$$
and $a_{z} \geq 0$ is the order of $\omega$ at $z$ (poles of $\omega$ do not belong to the principal polar locus).\newline
Therefore the asymptotic root-counting measure of the sequence $(T_{\omega}^{n} (f))_{n \in \mathbb{N}}$ is  the sum of $\frac{1}{A}\mu_{\omega,f}$ (corresponding to the contribution of the punctured surface $X^{\ast}$) and the term $\frac{1}{A} (d_{p}-1)\delta_{p}$ for each pole of $\omega$ of degree $d_{p} \geq 1$.
\end{proof}

\begin{remark}
The computation of the asymptotic root-counting measure proves in particular that the Cauchy measure $\mu_{\omega,f}$ of the Voronoi diagram $\mathcal{V}_{\omega,f}$ is 
a topological invariant of the pair $(\omega,f)$ in the spirit of the Gauss-Bonnet formula.
\end{remark}

\section{Application to derivatives of algebraic functions}\label{sec:algfunc}

Our approach strengthens and generalises the earlier results of Prather-Shaw in \cite{PrSh1, PrSh2} on the derivatives of (a restricted class of) algebraic functions. 

\begin{notation}
Consider a plane algebraic curve $\Gamma \subset \bC_z\times \bC_w$ with coordinates $(z,w)$ given as the zero locus of  a bivariate polynomial $\Phi(z,w)=0$. Take the projectivization $\widetilde {\Gamma} \subset \bC P^1_z\times \bC P^1_w$ of $\Gamma$ in $ \bC P^1_z\times \bC P^1_w$ with homogeneous coordinates $(Z_0:Z_1), (W_0:W_1)$. Denote the bidegree of $\widetilde {\Gamma}$ by $(k, \ell)$. To avoid trivialities we assume that  both $k$ and $\ell$ are positive. The case of rational functions considered by P\'olya corresponds to the situation $\ell=1,\; k\ge 2$.  
\end{notation}

\begin{notation}In the above notation, consider the projection $\pi_z: \widetilde \Gamma \to \bC P^1_z$. Under the assumption that $\Gamma$ is reduced the preimage $\pi_z^{-1}(\hat z)\subset \widetilde{\Gamma}$ of a generic point $\hat z\in \bC P^1$ consists of $\ell$ distinct points. By the \textcolor{blue}{\textit{branching locus}} $\B(\Gamma)\subset \bC P^1_z$ we mean the set of points $\hat z \in \bC P^1_z$ for which $\pi_z^{-1}(\hat z)\subset \bC P^1_w$ is a positive  divisor of degree $\ell$ with less than $\ell$ distinct points, i.e. some point has multiplicity exceeding $1$. Points  of multiplicity more than $1$ in the preimage $\pi_z^{-1}(\hat z)$ of a branching point $\hat z$ will be called the \textcolor{blue}{\textit{critical points}} of (the meromorphic function) $\pi_z: \widetilde \Gamma \to \bC P^1_z$.

The \textcolor{blue}{\textit{affine branching locus}} $\B_a(\Gamma)$ is not just the restriction of $\B(\Gamma)$ to $\bC_z$, but is its subset for which we additionally require that for $\hat z \in \B_a(\Gamma)$ at least one critical point of the divisor $\pi^{-1}_z(\hat z)$ is affine, i.e. lies in $\bC_w$ and not at $\infty_{w} \in \bC P^1_w$. 

We call a critical point $\hat w\in \pi_z^{-1}(\hat z)\vert_{\bC_w}$ \textcolor{blue}{\textit{essential}} if at least one of the local branches of $\Gamma$ near $\hat w$ has a critical point at $\hat w$ as well, i.e. its projection on a small neighbourhood of $\hat z$ is not a local biholomorphism. (In other words, this branch needs fractional powers  for its presentation  as a Puiseux series at $\hat z$ or, alternatively, $\hat z$ is a branch point of the lift $n\pi_z: N\widetilde \Gamma \to  \bC P^1_z$ where $N\widetilde \Gamma$ is the normalisation of $\widetilde \Gamma$.)  The  branching point $\hat z \in \B_a(\Gamma)$ obtained as projection of an essential critical point $\hat w$ to $\bC P^1_z$ is called \textcolor{blue}{\textit{essential}} as well. Let $\B_a^{ess}(\Gamma)\subset \B_a(\Gamma)$ denote the set of all essential affine branching points. 

Finally, by the {\em locus of poles} $\PP(\Gamma)\subset \bC P^1_z$ we mean the set of points $\hat z \in \bC P^1_z$ for which 
$\pi_z^{-1}(\hat z)\subset \bC P^1_w$ contains $\infty_{w} \in \bC P^1_w$ where $\bC P^1_w= \bC_w\cup \infty$. The {\em affine locus of poles} $\PP_a(\Gamma)\subset \PP(\Gamma)$ is the restriction of $ \PP(\Gamma)$ to $\bC_z$. 
\end{notation}

\begin{definition}
Given $\Gamma$ as above and a positive integer $n$, define the algebraic curve $\Gamma^{(n)}\subset \bC_z\times \bC_w$, called the \textcolor{blue}{\textit{affine $n$-th derivative curve}}, obtained by taking the $n$-th derivatives of all branches of $\Gamma$ considered as (local) algebraic function $w(z)$ with respect to the variable $z$. The \textcolor{blue}{\textit{ $n$-th derivative curve}} of $\Gamma$ is the projectivization $\widetilde{\Gamma}^{(n)} \subset \bC P^1_z \times \bC P^1_w$ of $\Gamma^{(n)}$.
\end{definition}




\begin{definition}\label{defn: zero} We define the {\em (projective) zero locus $\widetilde \ZZ^{(n)}(\Gamma)$ of the $n$-th derivative} of the algebraic function given by the algebraic curve $\Gamma$ as the intersection locus of  $\widetilde {\Gamma}^{(n)}$ with $(\bC P^1_z,0)\subset \bC P^1_z\times \bC P^1_w$. The {\em affine zero locus $\ZZ_a^{(n)}(\Gamma)$} is the restriction of $\widetilde \ZZ^{(n)}(\Gamma)$ to $\bC_z\subset \bC P^1_z$. The {\em (projective) limit set} 
$\F(\Gamma)\subset \bC P^1_z$ is the limiting set of the supports of  the sequence $\{\widetilde \ZZ^{(n)}(\Gamma)\}$ and the  {\em affine limit set} 
$\F_a(\Gamma)\subset \bC_z$ is the limit set of the supports of the sequence $\{\widetilde \ZZ^{(n)}_a(\Gamma)\}\subset \bC_z$.
\end{definition}

\begin{remark}
    In P\'olya's original definition, see \cite{Pol}, the limit set for the usual derivative of a rational function consists of all points  every neighbourhood of which contains points from infinitely many   $\widetilde{\mathcal{Z}}_a^{(n)}(\Gamma)$.
\end{remark}

\begin{problem} Given an affine (resp. projective) reduced algebraic curve $\Ga$ as above, describe its affine (resp. projective) limit set. 
\end{problem}

\begin{remark}
     Obviously,  if the bidegree of $\Ga$ is $(k,1)$ we recover the classical question of P\'olya about the accumulation of zeros of consecutive derivatives of a rational function. Moreover for  algebraic functions given by a certain class of algebraic curves, their limit sets have been described in \cite{PrSh2}. However, to the best of our knowledge, the description of the limit set for general reduced algebraic curves is new. 
\end{remark}

\medskip
To derive results about the limit sets of general algebraic curves, i.e. about the accumulation of the zeros of consecutive derivatives of algebraic functions from the material of the previous sections we proceed as follows. Our main objects are as follows. 

\begin{definition}
Given a reduced affine curve $\Ga\subset \bC_z\times \bC_w$, take the normalisation $N\widetilde \Gamma$ of the projectivization $\widetilde\Gamma\subset \bC P^1_z\times \bC P^1_w$ and define the meromorphic $1$-form $\tilde{\omega}$ on $\widetilde \Gamma$ as the pullback of the standard meromorphic $1$-form $dz$ on $\bC P^1_z$ under the projection $\pi_z: \widetilde \Gamma \to \bC P^1_z$. Then take the pullback $\omega$ of $\tilde{\omega}$ from $\tilde \Gamma$ to $N\widetilde \Gamma$. The meromorphic function $f: N\widetilde \Gamma \to \bC P^1_w$ is obtained as composition of the normalisation map $n: N \widetilde \Gamma \to \widetilde \Gamma$ with the projection $\pi_w: \widetilde \Gamma\to \bC P^1_w$.  
\end{definition}

\begin{lemma}\label{lm:imp}
In the above notation, the principal polar locus (see Definition~\ref{defn:PPL}) $\mathcal{PPL}(\omega,f)$ of the pair $(\omega,f)$ on Riemann surface $N\widetilde \Gamma$ is given by:
\begin{itemize}
    \item preimage under the normalisation map $n: N \widetilde \Gamma \to \widetilde \Gamma$ of the intersection $\widetilde \Gamma\cap (\bC P^1_z, \infty_w)$ except possibly for the point $(\infty_z,\infty_w$), see Remark~\ref{rmk:infty}; 
    \item preimage under the normalisation map $n: N \widetilde \Gamma \to \widetilde \Gamma$ of the set of all affine essential critical points of $\widetilde \Gamma$.
\end{itemize}
The principal polar locus is non-empty provided $\ell\ge 2$.
\end{lemma}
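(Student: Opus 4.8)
The plan is to unwind the definitions of $\mathcal{PPL}(\omega,f)$, $\omega$, and $f$ on $N\widetilde\Gamma$ and match each of the two bullet points with the corresponding part of the claimed description. Recall that $\omega = n^*\pi_z^*(dz)$ where $n:N\widetilde\Gamma\to\widetilde\Gamma$ is the normalisation map and $\pi_z:\widetilde\Gamma\to\bC P^1_z$ is the projection, and $f = \pi_w\circ n$. By Definition~\ref{defn:PPL}, the principal polar locus consists of (a) the poles of $f$ that are not poles of $\omega$, and (b) the zeros of $\omega$ at which $f$ is not locally factorised by a primitive of $\omega$. The whole task is to translate ``pole of $f$'' into ``point lying over $\widetilde\Gamma\cap(\bC P^1_z,\infty_w)$'', and ``zero of $\omega$'' into ``point lying over a critical point of $\pi_z$'', and then see exactly when the factorisation in Definition~\ref{defn:localfactor} can or cannot happen.

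First I would treat the poles of $f$. Since $f = \pi_w\circ n$, a point $q\in N\widetilde\Gamma$ is a pole of $f$ precisely when $n(q)$ lies on the divisor $(\bC P^1_z,\infty_w)$, i.e. the fibre $W_0=0$. Among these I must discard those that are also poles of $\omega$. The $1$-form $\omega$ downstairs is $\pi_z^*(dz)$, whose only pole on $\bC P^1_z$ is at $\infty_z$; pulling back by $\pi_z$ and then by $n$, the poles of $\omega$ on $N\widetilde\Gamma$ lie over $\infty_z\in\bC P^1_z$. Hence the only pole of $f$ that could also be a pole of $\omega$ lies over $(\infty_z,\infty_w)$ — this is exactly the exceptional point flagged in the statement via Remark~\ref{rmk:infty}, and it requires a local order-counting argument (comparing the order of the pole of $\omega$ coming from $dz$ at $\infty_z$ against the ramification of $\pi_z$ and $\pi_w$ at that point) to decide whether it belongs to $\mathcal{PPL}$. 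Away from that single point, every pole of $f$ contributes to $\mathcal{PPL}(\omega,f)$, giving the first bullet.

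Next I would treat the zeros of $\omega$. A zero of $\omega = n^*\pi_z^*(dz)$ on $N\widetilde\Gamma$ occurs exactly where the map $\pi_z\circ n : N\widetilde\Gamma\to\bC P^1_z$ is ramified (away from the poles), i.e. at the critical points of $\pi_z$ lifted to the normalisation — these are the \emph{essential} critical points in the terminology of the preceding Notation block, since normalising resolves the non-essential ones. At such a point $q$, lying over an affine branch point $\hat z$, I must determine whether $f$ is locally factorised by a primitive of $\omega$ (Definition~\ref{defn:localfactor}). Choosing a local coordinate $t$ on $N\widetilde\Gamma$ at $q$ with $z - \hat z = t^{a+1}\cdot(\text{unit})$ where $a = a_q$ is the order of the zero of $\omega$, a primitive $\phi$ of $\omega$ is, up to biholomorphism, $\phi = t^{a+1}$; factorisation $f = g\circ\phi$ would force $f$ to be a power series in $t^{a+1}$, i.e. $f$ would be a well-defined holomorphic (or meromorphic) function of $z-\hat z$ near $q$ — but this contradicts $q$ being a genuine critical point of the branch (an essential one). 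Conversely, if $q$ were not essential, the projection would be a local biholomorphism and such a factorisation trivially exists. So the zeros of $\omega$ lying in $\mathcal{PPL}(\omega,f)$ are precisely the (preimages under $n$ of) affine essential critical points of $\widetilde\Gamma$, matching the second bullet. Finally, non-emptiness when $\ell\ge 2$ follows from Corollary~\ref{cor:CriticalNontrivial} together with the observation that a degree-$\ell$ branched cover $\pi_z:\widetilde\Gamma\to\bC P^1_z$ with $\ell\ge 2$ must have either a pole fibre meeting $\infty_w$ or a genuine branch point, so at least one of the two bullets is non-vacuous; I would phrase this via Riemann–Hurwitz or simply by noting that a non-constant $f$ cannot have empty polar divisor on a compact surface.

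The main obstacle I anticipate is the careful bookkeeping at the single excluded point $(\infty_z,\infty_w)$: there one has a pole of $f$, a pole of $\omega$, and possibly ramification of $\pi_z$ all at once, and whether the point lands in $\mathcal{PPL}(\omega,f)$ genuinely depends on the comparison of these orders (and is handled case-by-case in Remark~\ref{rmk:infty}); stating the lemma ``except possibly for'' that point is precisely the clean way to sidestep it. A secondary, more routine difficulty is making the ``local factorisation $\iff$ non-essential'' equivalence fully rigorous in the presence of several local branches through a single point of $\widetilde\Gamma$ — this is why one passes to the normalisation $N\widetilde\Gamma$ first, where each point carries a single branch and the coordinate computation above is unambiguous.
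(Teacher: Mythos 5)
Your identification of the two halves of $\mathcal{PPL}(\omega,f)$ follows essentially the paper's own route: you describe the poles of $f$ as the preimages of $\widetilde\Gamma\cap(\bC P^1_z,\infty_w)$, observe that the poles of $\omega$ sit only over $\infty_z$ so that the single possibly-excluded point is over $(\infty_z,\infty_w)$, identify the zeros of $\omega$ as the ramification points of $\pi_z\circ n$ over $\bC_z$ (equivalently, preimages of the affine essential critical points), and argue that factorisation $f=g\circ\phi$ with $\phi$ a local primitive of $\omega$ would force the local branch $w(z)$ to be holomorphic in $z$, contradicting essentiality. All of that matches the paper's proof.

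The non-emptiness claim, however, has a genuine gap. You invoke Corollary~\ref{cor:CriticalNontrivial}, but that corollary has $\mathcal{PPL}(\omega,f)\neq\emptyset$ as a \emph{hypothesis}; using it to conclude non-emptiness is circular. Your fallback observation, that a non-constant $f$ on a compact surface must have poles, is also insufficient on its own: those poles might all sit over $(\infty_z,\infty_w)$, where they can coincide with poles of $\omega$ and so be excluded from $\mathcal{PPL}$. What the paper actually does (and what your parenthetical Riemann--Hurwitz remark would need to be fleshed out into) is show that $\omega$ itself has at least one zero, so that the second bullet is automatically non-vacuous; concretely, for $\ell\ge 2$ the degree-$\ell$ branched cover $\pi_z\circ n$ of $\bC P^1_z$ must ramify, and one checks via Riemann--Hurwitz that not all of its ramification can concentrate over $\infty_z$, hence there is an affine essential critical point and a zero of $\omega$ lying over it. (As you already established, any zero of $\omega$ in this situation fails the factorisation test, so it does lie in $\mathcal{PPL}$.) That quantitative step — at most $\ell-1$ units of ramification over $\infty_z$ versus a total of $2g+2\ell-2\ge 2\ell-2$ — is exactly the piece your proposal is missing, and it is where the hypothesis $\ell\ge 2$ (rather than some condition on $k$) is actually used.
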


\begin{remark}\label{rmk:infty}
There is a simple criterion when the projectivization in $\bC P^1_z\times \bC P^1_w$ of an algebraic curve $\Gamma\subset \bC_z\times \bC_w$ given by a bivariate polynomial $\Phi(z,w)=0$ passes through the point $(\infty_z,\infty_w)$. Obviously, this happens if and only if there is a branch of the algebraic function $w(z)$ which tends to $\infty_w$ when $z\to \infty_z$. The latter condition can be reformulated in terms of the Newton polygon $\mathcal N_\Phi$ of $\Phi(z,w)$. Namely, this happens if and only if $\mathcal N_\Phi$ contains an edge with a negative slope and such that $\mathcal N_\Phi$ lies below the line spanned by this edge. This statement can be easily deduced from the known results of Section 38, Th. 63–66 in \cite{Chb} and Ch. 4, Sections 3, Theorem 3.2 and Section 4, Theorem 4.1 in \cite{Wa}. For a more recent exposition, see \cite{WDS}.
\end{remark}

\begin{proof}[Proof of Lemma~\ref{lm:imp}]
Firstly we describe the poles of $f$ and the zeros and poles of $\omega$. Poles of $f$ coincide with $f^{-1}(\infty_w)$ where $\bC P^1_w=\bC_w\cup \infty_w$. In other words, it is the set of points on $N\widetilde \Gamma$ where $\widetilde \Gamma$ intersects the projective line $\bC P^1_z\times \infty_w \subset \bC P^1_z\times\bC P^1_w$.

Next, the singular points on $\widetilde \Gamma$ which project to the affine part $\bC_z\subset \bC P^1_z$ give rise to zeros of $\omega$ on $N\widetilde \Gamma$. We will show a bit later that the singularities of
$\widetilde \Gamma$  projecting to $\infty_z\in \bC P^1_z$ result in poles of $\omega$. 

Indeed each local singular branch of $\widetilde \Gamma$ near a point $z_0\in\bC_z$ can be represented by a Puiseux series 
$$w(z)=\sum_{j \geq \ell} a_j(z-z_0)^{j/k}$$ 
where $k \geq 2$. This branch can be parameterised by $z(t)=z_0+t^{k}$ and $w(t)=\sum_{j=\ell}^\infty a_jt^j$. Here $t$ can be considered as a local chart on $N\widetilde \Gamma$ centered at the point mapped to the singularity of the singular branch of $\widetilde \Gamma$ under consideration. Since $z=z_0+t^k$ one has that $dz=t^{k-1}dt$. Therefore $\omega$ being the pullback of $dz$ acquires a zero of order $k-1>1$ at this point. 

Furthermore let $y=\frac{1}{z}$ be the local coordinate near $\infty_z\in\bC P^1_z$. With respect to $y$, one has $dz=-\frac{1}{y^2}dy.$ For any branch of $\widetilde \Gamma$ which is non-singular in a small neighbourhood of $\infty_z$, i.e. $y=0$ the pullback of $dz$ to this branch will acquire a pole of order $2$.

Finally, let us assume that $\widetilde \Gamma$ has a singular local branch near infinity whose Puiseux series is given by 
$$w(y)=\sum_{j \geq \ell} a_j y^{j/k}.$$
We can parametrise this branch as $y=t^k$ and $w(t)=\sum_{j \geq \ell} a_j t^j.$ The pullback of $dz=-\frac{1}{y^2}dy$ to the local coordinate $t$ on $N\widetilde \Gamma$ equals $-\frac{k}{t^{k+1}}dt,$ i.e. it has a pole of order $k+1>2$ at the respective point.

\medskip

Now by definition, $\mathcal{PPL}(\omega,f)$ consists of (a) poles of $f$ that are not poles of $\omega$ and (b) zeros of $\omega$ at which $f$ is not locally factorised by a primitive of $\omega$. Poles of $f$ which are also poles of $\omega$ are preimages of the point $(\infty_z,\infty_w)\in \bC P^1_z\times \bC P_w^1$. If $\Gamma$ has branches which tend to $\infty_w$ when $z\to \infty_z$ then $\widetilde \Gamma$ contains $(\infty_z,\infty_w)$ and such points exist. Whether $\Gamma$ has such branches is described in Remark~\ref{rmk:infty}. 

\medskip
Finally let us show that at each zero of $\omega$ the function $f$ can not be locally factorised by a primitive of $\omega$. As we have already shown zeros of $\omega$ come from the singularities of $\widetilde \Gamma$ which project to $\bC_z$. Since $\omega$ is the pullback of $dz$ and $f$ is the pullback of the coordinate $w$ the factorisability of $f$ by a primitive of $\omega$ in simple words means that for the original branch of $\Gamma$, the algebraic function $w(z)$ is (locally) holomorphic in $z$ which contradicts the assumption that we consider a singular branch. 

\medskip

In order to prove that the principal polar locus is non-empty, we just observe that for $\ell\ge 2$ the meromorphic function $f: N\widetilde \Gamma \to \bC P^1_w$ is a  ramified cover   branched over at least two points of $\bC P^1_w$. Since at least one of these two points is not $\infty_{w}$ and $\ell\ge 2$, $\omega$ has at least one zero.
\end{proof}

Applying Theorem~\ref{thm:MAIN} we obtain that the zeros of $T^n_\omega(f)$ accumulate on the Voronoi diagram of the Riemann surface $N\widetilde \Gamma$ with the measure described in Section~\ref{sub:Cauchy}. The limit set of the algebraic function given by the curve $\Gamma$ and the respective measure of this limit set on $\bC P^1_z$ are obtained as the push-forward of the respective objects from $\N\widetilde \Gamma$ to $\bC P^1_z$.  

\medskip
More explicitly, we get the following.  

\begin{corollary}\label{th:PolyaAlg} 
For any reduced algebraic curve $\Gamma\subset \bC_z\times \bC_w$, the affine limit set $\F_a(\Gamma)\subset \bC_z$ coincides with the push-forward of the Voronoi diagram of the pair $(\omega, f)$ on $N\widetilde \Gamma$ under the projection $\pi_z$ to $\bC P^1_z$. 
\end{corollary}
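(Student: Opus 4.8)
The plan is to combine Theorem~\ref{thm:MAIN} applied to the pair $(\omega,f)$ on the compact Riemann surface $N\widetilde\Gamma$ with a careful translation of the statement ``zeros of $T^n_\omega(f)$'' into ``zeros of the $n$-th derivative of the algebraic function given by $\Gamma$'', and then push everything forward along $\pi_z$. First I would observe that, by construction, $\omega$ is the pullback of $dz$ under the composition $N\widetilde\Gamma \xrightarrow{n} \widetilde\Gamma \xrightarrow{\pi_z} \bC P^1_z$, so over the affine part $\bC_z$ a local primitive of $\omega$ is literally the coordinate $z$ (pulled back). Consequently, at any point of $N\widetilde\Gamma$ lying over $\bC_z\setminus\B_a(\Gamma)$ and not over a pole, the operator $T_\omega$ acts on $f$ exactly as $\frac{d}{dz}$ acts on the corresponding branch $w(z)$: one checks by induction that $T^n_\omega(f) = w^{(n)}(z)$ in such a chart, because $T_\omega(g\circ z) = (g'\circ z)$ whenever $z$ is a local primitive of $\omega$. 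Hence the affine zero locus $\ZZ^{(n)}_a(\Gamma)$ is precisely the image under $\pi_z\circ n$ of the zero set of $T^n_\omega(f)$ restricted to the locus lying over $\bC_z$.

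Next I would invoke Theorem~\ref{thm:MAIN}(i), which is applicable since Lemma~\ref{lm:imp} guarantees $\mathcal{PPL}(\omega,f)\neq\emptyset$ (we are in the non-trivial case, so $\ell\ge 2$). It tells us that $\mathcal{L}(T_\omega,f)\subset N\widetilde\Gamma$ is the union of the Voronoi diagram $\mathcal{V}_{\omega,f}$, the poles of $\omega$ of order at least two, and the simple poles of $\omega$ that are not poles of $f$. By the computation in the proof of Lemma~\ref{lm:imp}, the poles of $\omega$ all lie over $\infty_z\in\bC P^1_z$ (they come from branches near $\infty_z$, where $dz=-y^{-2}dy$ contributes a pole of order at least $2$). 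Therefore, after intersecting with the locus over $\bC_z$ and applying $\pi_z$, these ``extra'' pieces of $\mathcal{L}(T_\omega,f)$ disappear, and the affine limit set $\F_a(\Gamma)$ is exactly $\pi_z(\mathcal{V}_{\omega,f}\cap (N\widetilde\Gamma \text{ over } \bC_z))$. One then notes that $\mathcal{V}_{\omega,f}$ is itself a compact subset of $N\widetilde\Gamma$ contained in $X^{\ast\ast}$, i.e. away from the poles of $\omega$, hence away from the fibre over $\infty_z$ minus finitely many points; more precisely its points over $\infty_z$ (if any) are a measure-zero finite set that does not affect the affine limit set. So $\F_a(\Gamma) = \pi_z(\mathcal{V}_{\omega,f})\cap\bC_z = \pi_z(\mathcal{V}_{\omega,f})$ as a subset of $\bC_z$, which is the claimed push-forward.

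Finally I would address the one genuinely delicate point: the passage between the limit set of zeros on $N\widetilde\Gamma$ and the limit set $\F_a(\Gamma)$ as defined in Definition~\ref{defn: zero} via the derivative curves $\Gamma^{(n)}$. Here one must check that taking the $n$-th derivative of \emph{all} branches of $\Gamma$ (the curve-theoretic definition) produces, fibrewise, exactly the values $\{w_i^{(n)}(z)\}$ of the derivatives of the local branches, so that a zero of $\Gamma^{(n)}$ over a point $\hat z$ is the same as a point of $N\widetilde\Gamma$ over $\hat z$ at which $T^n_\omega(f)$ vanishes --- and that this identification is compatible with taking limits of supports as $n\to\infty$. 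Since $\pi_z\circ n: N\widetilde\Gamma\to\bC P^1_z$ is a finite branched cover, the image of a convergent sequence of zeros converges to the image of the limit, and conversely every point of $\F_a(\Gamma)$ lifts (the fibre being finite) to a limit point of zeros upstairs; this is where one uses that $\pi_z$ is proper and finite-to-one. I expect this bookkeeping --- matching the algebro-geometric ``derivative curve'' with the analytic iteration $T^n_\omega$ and verifying the limit sets correspond under a finite branched cover, including the behaviour at the branch points $\B_a(\Gamma)$ where several sheets collide --- to be the main obstacle; everything else is a direct application of Theorem~\ref{thm:MAIN} and Lemma~\ref{lm:imp}.
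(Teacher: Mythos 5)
Your proof is correct and takes the same route as the paper, which in fact devotes only one short paragraph to the corollary: it states that Theorem~\ref{thm:MAIN} applies on $N\widetilde\Gamma$ and that the limit set on $\bC P^1_z$ is obtained by push-forward under $\pi_z$. Your three steps---(1) that $T_\omega$ is conjugated to $d/dz$ over $\bC_z$ so that $T_\omega^n(f)$ computes $w^{(n)}(z)$ branchwise, (2) that the non-Voronoi pieces of $\mathcal{L}(T_\omega,f)$ sit entirely over $\infty_z$ because $\omega$ is the pullback of $dz$ and has poles only there, and (3) that finiteness and properness of $\pi_z\circ n$ make limit sets upstairs and downstairs correspond---are precisely the bookkeeping the paper leaves implicit, and step (3) is the one genuinely worth spelling out.

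One small imprecision: $\mathcal{V}_{\omega,f}$ is not compact in $N\widetilde\Gamma$ (it is an unbounded subset of the open surface $X^{\ast}$, just as the classical Voronoi diagram is unbounded in $\bC$), and it has \emph{no} points over $\infty_z$ at all, since every point of the fibre over $\infty_z$ is a pole of $\omega$ and hence already removed from $X^{\ast}$. Your ``compact subset \ldots with finitely many points over $\infty_z$, if any'' should simply read: $\mathcal{V}_{\omega,f}\subset X^{\ast}$ lies entirely over $\bC_z$. The conclusion $\F_a(\Gamma)=\pi_z(\mathcal{V}_{\omega,f})$ is unaffected. You also invoke Lemma~\ref{lm:imp} to get $\mathcal{PPL}(\omega,f)\neq\emptyset$ ``since $\ell\ge2$''; for $\ell=1$ this is the classical rational-function case, where the corollary still holds (trivially when $\mathcal{PPL}$ is empty), so the restriction is harmless but worth flagging.
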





\medskip
Let us  present some explicit examples of $\Gamma=\Gamma_0$, $\Gamma_n$ for $n\ge 1$ and illustration of the zeros of consecutive derivatives of the occurring algebraic functions.   

\medskip
\noindent {\bf Example 1.} Let $\Gamma_0 := \Gamma$ be the curve defined by $w^\ell=\frac{P(z)}{Q(z)}$ for some polynomials $P(z)$ and $Q(z)$.
\begin{lemma}\label{lem:recurrence}
	The curve $\Gamma_n := \Gamma^{(n)}$ is given by the equation
	\begin{equation} \label{rational-diff-form} w^\ell = \frac{V_n^\ell(z)}{\ell^{n\ell}P(z)^{n\ell -1} Q(z)^{n\ell + 1}} 
	\end{equation}
	where $V_n(z)$ is a polynomial determined by the recurrence relation
	\begin{itemize}
		\item $ V_0(z) = 1$,
		\item $V_{n+1}(z) = \ell P(z)Q(z)V_n'(z) - ((n\ell-1)P'(z)Q(z) + (n\ell+1)P(z)Q'(z))V_n(z)$.
	\end{itemize}
	In particular, if $P$ and $Q$ has degrees $d_1$ and $d_2$, respectively, then 
	\begin{enumerate}
	\item $V_n(z)$ has degree $n(d_1+ d_2 -1)$ and 
	\item $\Gamma_n$ has bidegree $(\max\{n\ell(d_1+d_2) - n\ell, n\ell(d_1+d_2)+(d_2-d_1)\}, \ell)$.
	\end{enumerate}
\end{lemma}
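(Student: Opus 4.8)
The plan is to establish the recurrence first, then deduce the degree/bidegree claims as bookkeeping. Starting from $w^\ell = P/Q$, I would differentiate implicitly: writing $w = (P/Q)^{1/\ell}$ along any branch, a single differentiation gives $w' = \frac{1}{\ell}(P/Q)^{1/\ell - 1}(P/Q)' = \frac{1}{\ell} w^{1-\ell} \cdot \frac{P'Q - PQ'}{Q^2}$. Raising to the $\ell$-th power, $(w')^\ell = \frac{1}{\ell^\ell} w^{\ell(1-\ell)} \frac{(P'Q-PQ')^\ell}{Q^{2\ell}} = \frac{1}{\ell^\ell}\cdot \frac{Q^{\ell-1}}{P^{\ell-1}} \cdot \frac{(P'Q-PQ')^\ell}{Q^{2\ell}}$, which already has the shape of \eqref{rational-diff-form} with $n=1$ and $V_1 = P'Q - PQ'$ (matching the recurrence: $V_1 = \ell PQ \cdot 0 - ((\ell-1)P'Q \cdot 1 \cdot (\text{with }n=0) \ldots)$; one checks $V_1 = -(-P'Q + PQ') = P'Q - PQ'$ after substituting $n=0$, $V_0=1$). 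I would then run the inductive step: assuming $\Gamma_n$ has the stated form, differentiate $w = \frac{V_n(z)}{\ell^n P^{(n\ell-1)/\ell} Q^{(n\ell+1)/\ell}}$ using the logarithmic derivative, $\frac{w'}{w} = \frac{V_n'}{V_n} - \frac{n\ell-1}{\ell}\frac{P'}{P} - \frac{n\ell+1}{\ell}\frac{Q'}{Q}$, clear denominators, and verify that $w' = \frac{V_{n+1}(z)}{\ell^{n+1} P^{((n+1)\ell-1)/\ell} Q^{((n+1)\ell+1)/\ell}}$ with $V_{n+1}$ exactly as in the recurrence. Raising to the $\ell$-th power then gives the displayed form for $\Gamma^{(n+1)}$.

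For the degree of $V_n$: by induction, if $\deg V_n = n(d_1+d_2-1)$, then in the recurrence $V_{n+1} = \ell PQ V_n' - ((n\ell-1)P'Q + (n\ell+1)PQ')V_n$ the first term has degree $\le d_1 + d_2 + (n(d_1+d_2-1) - 1) = (n+1)(d_1+d_2-1)$ and the second term has degree $\le (d_1+d_2-1) + n(d_1+d_2-1) = (n+1)(d_1+d_2-1)$, so $\deg V_{n+1} \le (n+1)(d_1+d_2-1)$. The main point is that equality holds, i.e. the leading terms do not cancel: I would compare the coefficients of the top-degree monomial. Writing $P = a z^{d_1} + \cdots$, $Q = b z^{d_2} + \cdots$, $V_n = c_n z^{n(d_1+d_2-1)} + \cdots$, the leading coefficient of $\ell PQ V_n'$ is $\ell \cdot ab \cdot n(d_1+d_2-1) c_n$ and that of $((n\ell-1)P'Q + (n\ell+1)PQ')V_n$ is $((n\ell-1)d_1 + (n\ell+1)d_2)\, ab\, c_n$; hence the leading coefficient of $V_{n+1}$ is $ab\, c_n\big(\ell n(d_1+d_2-1) - (n\ell-1)d_1 - (n\ell+1)d_2\big) = ab\, c_n(d_1 - d_2 - n\ell)$. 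This is nonzero for all $n$ unless $d_1 - d_2 = n\ell$, so in the generic case the degree is exactly $n(d_1+d_2-1)$; when $d_1 - d_2 = n\ell$ for some $n$ a minor degeneration occurs and one should either note this as an exceptional case or argue that the statement is meant generically. I would flag this as a caveat (and, if needed, replace "$V_n$ has degree" by "degree at most", since only the bidegree of $\Gamma_n$ — read off from the numerator $V_n^\ell$ versus the denominator $\ell^{n\ell}P^{n\ell-1}Q^{n\ell+1}$ — matters for the applications).

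Finally, for the bidegree of $\Gamma_n$: the equation \eqref{rational-diff-form} presents $w^\ell$ as a ratio of a polynomial of degree $\ell \cdot n(d_1+d_2-1) = n\ell(d_1+d_2) - n\ell$ over a polynomial of degree $(n\ell-1)d_1 + (n\ell+1)d_2 = n\ell(d_1+d_2) + (d_2 - d_1)$. Clearing to get the defining bivariate polynomial $w^\ell \cdot(\text{denominator}) - V_n^\ell = 0$, the $z$-degree is the maximum of these two, namely $\max\{n\ell(d_1+d_2) - n\ell,\ n\ell(d_1+d_2) + (d_2 - d_1)\}$, and the $w$-degree is $\ell$ since $w$ appears only to the power $\ell$ (after checking the numerator and denominator share no common factor for generic $P,Q$, which holds since $P, Q$ are coprime and $V_n$ is generically coprime to $PQ$). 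The one genuine obstacle is the non-cancellation of leading terms in the $V_n$ recurrence; everything else is a routine induction.
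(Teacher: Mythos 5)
Your inductive approach—implicitly differentiate the relation defining $\Gamma_n$, isolate $w^{(n+1)}$, raise to the $\ell$-th power, and read off the recurrence for $V_{n+1}$—is essentially the same route as the paper's proof; your formulation via the logarithmic derivative of $w^{(n)}$ and the paper's direct differentiation of $(w^{(n)})^\ell = V_n^\ell/W_n$ are algebraically equivalent. The computation of $V_1 = P'Q - PQ'$ and the derivation of the exponents $n\ell\pm 1$ on $P,Q$ both match.

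Where you have genuinely improved on the paper is the degree claims. The paper dismisses them with the single sentence ``The claims about the degrees follow directly,'' but your leading-coefficient computation shows this is not unconditionally true: with $P = a z^{d_1}+\cdots$, $Q = b z^{d_2}+\cdots$, $V_n = c_n z^{n(d_1+d_2-1)} + \cdots$, the recurrence gives $c_{n+1} = ab\,c_n\,(d_1 - d_2 - n\ell)$, hence $c_n = (ab)^n \prod_{j=0}^{n-1}(d_1-d_2-j\ell)$, which vanishes precisely when $d_1 - d_2 \in \{0, \ell, \dots, (n-1)\ell\}$. Already $d_1 = d_2$ makes $V_1 = P'Q - PQ'$ drop in degree. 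So the assertion $\deg V_n = n(d_1+d_2-1)$ should really read ``degree at most,'' or be accompanied by the genericity hypothesis $d_1 - d_2 \notin \ell\,\mathbb{Z}_{\geq 0}$. Your second caveat—that the bidegree computation tacitly assumes $V_n^\ell$ and $P^{n\ell-1}Q^{n\ell+1}$ share no common factor—is also well-taken: one checks that $V_1 = P'Q - PQ'$ already picks up a factor of $P$ at any multiple root of $P$, so coprimality can fail. Neither issue affects the recurrence itself or the later applications in the section (where the paper works with explicit $P,Q$), but the lemma as stated is slightly overclaimed and your flags are correct.
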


\begin{proof}
	The base case $n = 0$ is clear. Assuming $\Gamma_n$ has the form in (\ref{rational-diff-form}) and writing  the denominator in (\ref{rational-diff-form}) as $W_n$, we have
	\[ 	\ell (w^{(n)})^{\ell -1} w^{(n+1)} = \frac{V_n^{\ell -1}(\ell W_n V_n' - V_n W_n')}{W_n^{2}} \]
and
\begin{equation*}
	\begin{aligned}
	(w^{(n+1)})^\ell &= \left( \frac{W_n^{(\ell-1)/\ell}}{\ell V_n^{\ell -1}} \cdot \frac{V_n^{\ell -1}(\ell W_n V_n' - V_n W_n')}{W_n^{2}} \right)^\ell \\
	&= \frac{(\ell W_n V_n' - V_n W_n')^\ell}{\ell ^\ell W_n^{\ell+1}}.
	\end{aligned}
\end{equation*}
Substituting back $W_n$ and simplifying yields
\begin{equation*}
	\begin{aligned}
	(w^{(n+1)})^\ell &= \frac{(\ell^{n\ell}P^{n\ell -2}Q^{n\ell})^{\ell}(\ell PQV_n' - V_n ((n\ell+1)PQ' +(n\ell-1)P'Q)^\ell}{\ell^{\ell} (\ell^{n\ell} P^{n\ell -1}Q^{n\ell +1})^{\ell +1}}	\\
&= \frac{(\ell PQV_n' - V_n ((n\ell+1)PQ' +(n\ell-1)P'Q)^\ell}{\ell^{\ell(n+1)} P^{\ell(n+1) -1}Q^{\ell(n+1) +1}}.
	\end{aligned}
\end{equation*}
The claims about the degrees follow directly. 
\end{proof}

\begin{figure}[!hbt]
\centering
\includegraphics[width=0.45\linewidth]{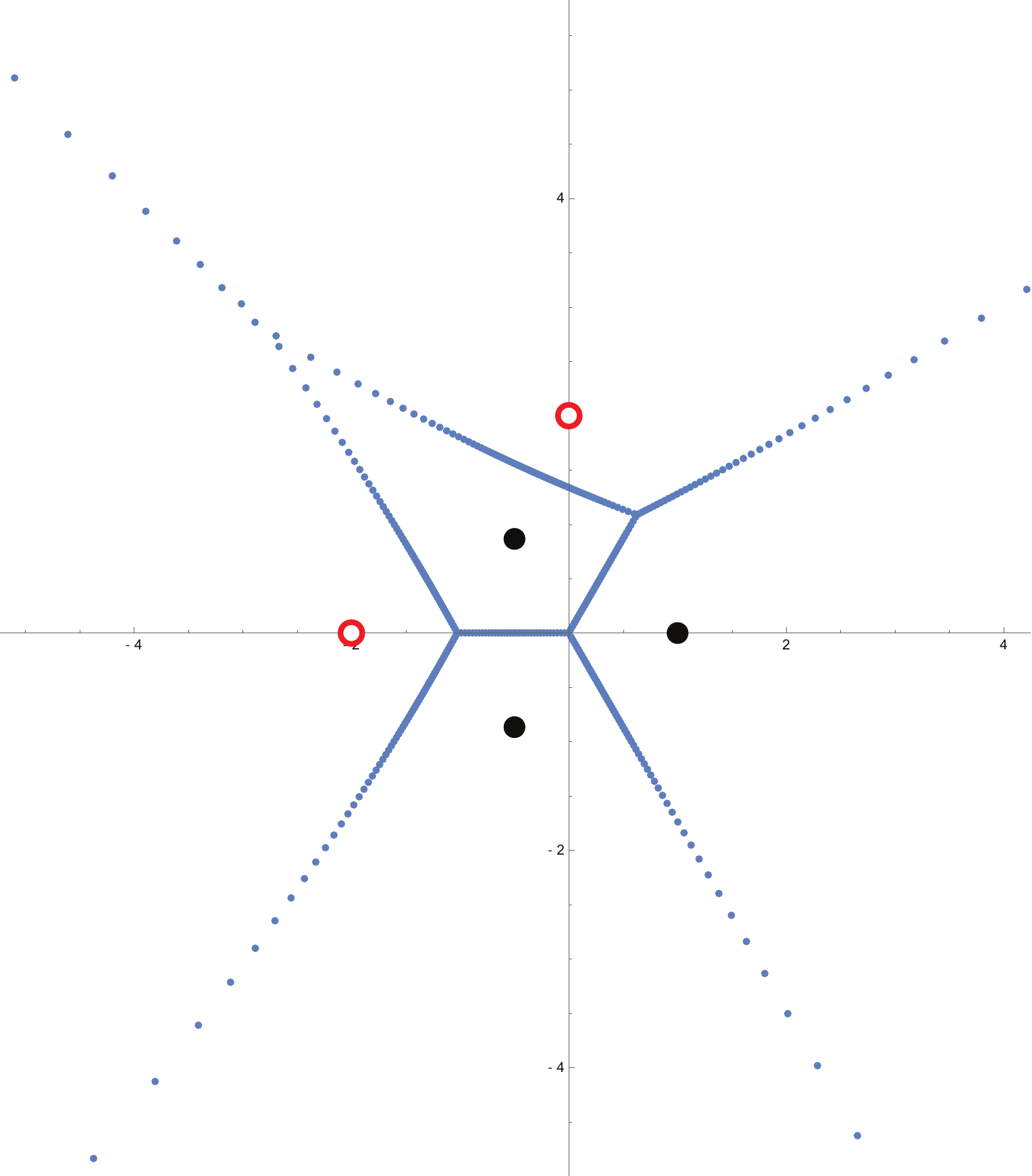}
\caption {Root distribution of $V_{100}(z)$ for $P=(z+2)(z-2i)$, $Q=z^3-1$ and $\ell=3$.}\label{Fig:Ex1}
\end{figure}

\begin{remark} Fig.~\ref{Fig:Ex1} illustrates the above Example 1. In this case the curve $\Gamma$ is given by $(z^3-1)w^3=(z+2)(z-2i)$. It is smooth and its projection onto $\bC P^1_z$ has $6$ branching points at the cubic roots of $1$ (shown by black dots in Fig.~\ref{Fig:Ex1}), $-2$ and  $2i$ (shown by red circles in Fig.~\ref{Fig:Ex1}), and $\infty$. All of them have multiplicity $2$. The cubic roots of $1$ are also poles of the algebraic function. From Riemann-Hurwitz formula follows that genus of $N\widetilde \Gamma$ is $4$. The Voronoi diagram involves only five of the branching points since $\infty$ is a pole of $\omega$.

\end{remark}

\noindent
{\bf Example 2.} As a concrete special case of the above construction take  $\Gamma:=\Gamma_0$ as the unit circle given by $z^2+w^2=1 \leftrightarrow w^2=1-z^2$. 

\medskip
Lemma~\ref{lem:recurrence} implies the following. 
\begin{corollary}
    
\label{lm:circ} The curve $\Gamma^{(n)}:=\Gamma_n$ is given by the equation 
$$w^2=-\frac{U^2_n(z)}{(z^2-1)^{2n-1}}, 
$$
where the polynomial sequence $\{U_n(z)\}$ is given by the  recurrence relation:  
$$U_1(z)=z;\; U_n(z)=(2n-3)zU_{n-1}(z)+(1-z^2)U^\prime_{n-1}(z) \;\text{for \;} n\ge 2.$$
\end{corollary}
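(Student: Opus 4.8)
The plan is to obtain the statement as the special case of Lemma~\ref{lem:recurrence} in which $\ell=2$, $P(z)=1-z^2$ and $Q(z)=1$, since the unit circle $z^2+w^2=1$ is exactly $w^\ell=P(z)/Q(z)$ for these choices. Substituting them into Lemma~\ref{lem:recurrence} (so that $P'=-2z$ and $Q'=0$), the $n$-th derivative curve $\Gamma_n$ is given by
$$w^2=\frac{V_n^2(z)}{2^{2n}(1-z^2)^{2n-1}},$$
while the recurrence for the numerator polynomial degenerates to $V_0=1$ and
$$V_{n+1}(z)=2(1-z^2)V_n'(z)+2(2n-1)zV_n(z).$$

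Next I would rewrite the denominator in the form appearing in the corollary. Because $2n-1$ is odd, one has $(1-z^2)^{2n-1}=-(z^2-1)^{2n-1}$, hence
$$w^2=-\frac{V_n^2(z)/2^{2n}}{(z^2-1)^{2n-1}}.$$
It therefore remains to put $U_n(z):=-V_n(z)/2^n$, which is a polynomial since $V_n$ is (Lemma~\ref{lem:recurrence}), and to verify that it obeys the stated initialisation and recurrence. For the base case, the $V$-recurrence at $n=0$ gives $V_1=2(1-z^2)V_0'+2(2\cdot 0-1)zV_0=-2z$, so $U_1=-V_1/2=z$. For the inductive step, dividing the identity for $V_{n+1}$ by $-2^{n+1}$ and using $V_n=-2^nU_n$, $V_n'=-2^nU_n'$ yields $U_{n+1}=(1-z^2)U_n'+(2n-1)zU_n$; replacing $n$ by $n-1$ this is precisely $U_n=(2n-3)zU_{n-1}+(1-z^2)U_{n-1}'$ for $n\ge 2$. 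Since $U_n^2=V_n^2/2^{2n}$, substituting back gives $w^2=-U_n^2(z)/(z^2-1)^{2n-1}$, as claimed.

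The argument is essentially bookkeeping, and I do not expect a genuine obstacle: the only points needing care are the odd-power sign flip $(1-z^2)^{2n-1}=-(z^2-1)^{2n-1}$ and the simultaneous rescaling $U_n=-V_n/2^n$ together with the shift of index in the recurrence. As a sanity check one may also recover the first step directly: differentiating $w^2=1-z^2$ gives $2ww'=-2z$, hence $(w')^2=z^2/(1-z^2)=-z^2/(z^2-1)$, which is $w^2=-U_1^2/(z^2-1)$ with $U_1=z$; iterating this direct differentiation yields an alternative, self-contained proof of both the curve equation and the recurrence.
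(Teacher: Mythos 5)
Your derivation is correct and matches the intended route: the paper simply states that Corollary~\ref{lm:circ} follows from Lemma~\ref{lem:recurrence}, and you carry out exactly that specialization with $\ell=2$, $P=1-z^2$, $Q=1$, absorb the factor $2^{2n}$ and the sign $(1-z^2)^{2n-1}=-(z^2-1)^{2n-1}$ into $U_n:=-V_n/2^n$, and check that the rescaling converts the $V$-recurrence into the stated one for $U_n$. One small nitpick: the claim that $U_n=-V_n/2^n$ is a polynomial \emph{because} $V_n$ is does not quite follow by itself (one must know $2^n$ divides $V_n$); but this is immediate from the $U$-recurrence you derive, which manifestly produces polynomials, so the conclusion stands.
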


\begin{remark}
In the latter case the branching points are located at $\pm 1$ and all roots of $U_n$ are purely imaginary.
\end{remark}

\section{Outlook}\label{sec:Outlook}

\noindent
{\bf 1.} {(Open Riemann surfaces)} The original shire theorem deals with $\mathbb{C}$ which is open and meromorphic functions on it. In particular, the main result of \cite{Ha} shows that for an entire function of the form $R(z)e^{U(z)}$ with polynomial $U$ a certain part of the total mass of the limiting root-counting measure will be placed at $\infty$. In the present paper we only consider compact Riemann surface $X$, but appropriate modifications of our results work in broader settings.
\par
The crucial hypothesis underlying the constructions of Section~\ref{sec:Geometry} is that our translation surfaces are metrically complete. It follows that the first part of Theorem~\ref{thm:MAIN} still holds for the class of meromorphic functions defined on the surface $X^{\ast}$ punctured at the poles of $\omega$ (and having possibly essential singularities there). The latter class includes the functions studied by Ch.~H\"agg in \cite{Ha}, that globally have a finite number of zeros and whose asymptotic zero counting measure converges.

If the second part of the theorem is reformulated in the following way, then it also will hold for the class of meromorphic functions defined on the surface $X^{\ast}$. Let $U$ be a relatively compact set in $X^\ast$, and let $\mu_n(f, U):=\frac{1}{\vert Z(n,U)\vert }\sum_{z\in Z(n,U)}\delta_z$, where $Z(n,U)$ is the set of zeros of $T_\omega^nf$ in $U$. This is a probability measure, since $Z(n,U)$ is compact and $f$ is meromorphic. The reformulation of the theorem is then that $\mu_n(f, U)$  will converge to the right hand side of 
$$\mu_{\omega,f}+\sum\limits_{p \in \mathcal{P}} (d_{p}-1)\delta_{p},$$
normalised to a probability measure with support in $U$ in the same manner. Then this reformulation covers \cite{Ha} as well and gives a more conceptual proof of similar results for meromorphic functions in the plane in \cite{Ge}.

\medskip
\noindent
{\bf 2.} {(Global geometry of translation surfaces)} An essential feature of the theory of translation surfaces is that the same objects have a complex-analytic side (a Riemann surface  with a holomorphic differential) and a geometric side (a polygon with pairs of sides identified by translations). Although these two descriptions are theoretically equivalent, going from one side to another is a delicate question in practice.
\par
In a translation surface obtained by the gluing of a family of triangles, the lengths and the slopes of the edges are respectively the module and the argument of the periods of some differential over the corresponding relative homology classes. However, starting with a complex structure (defined by a Fuchsian group for example) and an explicit holomorphic differential (in terms of modular forms), it is a difficult problem to determine which relative homology classes are represented by simple geodesic segments (in order to construct a triangulation).
\par
In the current state of the art, the standard approach to obtain a geometric presentation of a translation surface is to discretise the circle of directions and integrate the differential equation corresponding to the differential form to find saddle connections.
\par
If we can call the latter approach ``classical'', Theorem~\ref{thm:MAIN} suggests a ``quantum'' way from the complex-analytic data to the flat picture. For a given translation surface $(X,\omega)$, we consider a meromorphic function $f$ with poles located at the zeros of $\omega$. As $n \to + \infty$, zeros of $T_{\omega}^{n} (f)$ accumulate on the Voronoi diagram defined with respect to the zeros of $\omega$. After an adequate number of iterations, the relative homology classes of the edges of the Delaunay polygonation (dual to the edges of the Voronoi tessellation) are characterised with an arbitrarily low error rate. In Figure~\ref{fig:torus2}, the Voronoi diagram of a nontrivial flat metric in genus one is obtained numerically using this very method.

\medskip
\noindent
{\bf 3.} {(Fuchsian meromorphic connections)} 
Generalisation to an even broader settings can be made as follows. Let $X$ be a compact Riemann surface with a Fuchsian meromorphic connection $\nabla$ on a line bundle $\mathcal{L}$. We can investigate the limit set of a global meromorphic section of $\mathcal{L}$ under iteration of $\nabla$.
\par
Fuchsian meromorphic connections induce complex affine structures (see~\cite{NST}) providing local coordinates where $\nabla$ is conjugated with $\frac{d}{dz}$. We still have a meaningful notion of affine disk immersion so Voronoi diagrams can be defined. Besides, the definition of Cauchy measures in terms of angles is suitable for a generalisation to complex affine structures (see Section~\ref{sub:Cauchy}). Nevertheless, an important difference with the current settings is that in most cases, meromorphic connections can fail to be geodesically complete, as in the case of the Hopf torus $\mathbb{C}^{\ast}/ \langle z \mapsto 2z \rangle$.

\medskip
\noindent
{\bf 4.} {($k$-differentials)} 
A class of Fuchsian meromorphic connection can already be handled with the methods of the current paper. A $k$-differential $\omega$ is a global meromorphic section of the $k^{th}$ tensor power $K_{X}^{\otimes k}$ of the canonical bundle. In local coordinates, it is a complex analytic object of the form $h(z)dz^{k}$ where $h$ is a meromorphic function. A $k^{th}$ root $\omega^{1/k}$ of $\omega$ can be thought as a global meromorphic section of a line bundle twisted by some character $\chi$ valued in the complex multiplicative group $(\mathbb{C}^{\ast},\times)$. The operator $T:f\mapsto \frac{df}{\omega^{1/k}}$ acting on the space of global sections of a suitable line bundle coincides with a Fuchsian meromorphic connection.
\par
For a compact Riemann surface $X$ with a $k$-differential $\omega$, the canonical $k$-cover (see \cite{BCGGM} for details) is the smallest ramified cover $\pi:(\tilde{X},\tilde{\omega}) \rightarrow (X,\omega)$ such that $\tilde{\omega}=\pi^{\ast}\omega$ is the $k^{th}$ power of a globally defined meromorphic $1$-form. This way, the limit set associated with operator $T$ and some meromorphic section $f$ of a line bundle $\mathcal{L}$ is the projection of the limit set associated with an operator $T_{\tilde{\omega}^{1/k}}$ and a section of $\pi^{\ast} \mathcal{L}$ defined on a surface of higher genus $\tilde{X}$. The latter limit set is described in Theorem~\ref{thm:MAIN}.
\par
As in the case of a $1$-form, zeros of $\omega$ appear in the principal polar locus, but one must also include some poles whose order lies strictly between $0$ and $k$. These poles are conical singularities with an angle smaller than $2\pi$ in the singular flat metric and they are ramification values of the canonical $k$-cover.

\appendix

\section{}\label{sec:append}

\subsection{Extending a result of A. G. Orlov}

In \cite[Theorem 1]{Orlov}, A. G. Orlov proved pointwise asymptotics for the coefficients of a Taylor series representation of a branch of an algebraic function, which extends as Puiseux series at singularities. To make the paper self-contained, we include below his result and proof in Theorem~\ref{thm:Orlov}, noting that the result is local, since the proof works mutatis mutandis assuming just that the relevant singularities are algebraic. Then in Corollary~\ref{cor:Orlov} below, we will modify Orlov's proof to obtain uniform convergence for an arbitrary holomorphic function with one Puiseux-type singularity. This is crucial in Section \ref{sec:ZeroFree}, especially in the proof of Lemma~\ref{lem:coeffs-conv}. 

We denote by $D(x,r)$ the open disk of radius $r >0$ centered at $x \in \mathbb{C}$.  

\begin{theorem}\label{thm:Orlov}
Let $f$ be a holomorphic function on an open disk $D(0,\rho)$ with $\nu$ singularities $a_1, \ldots, a_{\nu}$ such that $|a_{s}| = \rho$ for $s = 1,\ldots, \nu$. Suppose that $f$ extends as a Puiseux series to each singular point and holomorphically to every other point of the circle of radius $\rho$. Then, 
\begin{equation}\label{eq: Orlov}
\frac{f^{(k)}(0)}{k!} \sim \sum\limits_{s=1}^{\nu} \tilde{b}_{s}(k)a_{s}^{-k} + O((\rho+\epsilon)^{-k}
\end{equation}
 where $\epsilon>0$ and, for each $s = 1,\ldots, \nu$, the term $\tilde{b}_{s}(k)$ is of the form
$$\tilde{b}_{s}(k) = \frac{B_s}{\Gamma(-m_s)}a_s^{m_s}k^{-(m_s+1)} + o(k^{-(m_s+1)}),$$
where $B_s$ and $m_s$ come from the term $B_s(a_s - z)^{m_s}$ with $m_s$ being the smallest number between the least negative exponent or the least non-integer positive exponent in the Puiseux representation of $f$ at $a_s$.
\end{theorem}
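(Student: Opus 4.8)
The plan is to follow Orlov's original argument but to keep careful track of the error terms so that they become uniform. The starting point is the Cauchy integral representation
\[
\frac{f^{(k)}(0)}{k!} = \frac{1}{2\pi i} \oint_{|z| = \rho'} \frac{f(z)}{z^{k+1}}\, dz
\]
for any $\rho' < \rho$. The idea is to deform this contour outward past the circle of radius $\rho$, which is legitimate everywhere except near the singular points $a_1,\dots,a_\nu$. Concretely, I would fix a small $\epsilon > 0$ so that $f$ continues holomorphically to the annular region $\rho - \epsilon < |z| < \rho + \epsilon$ minus small keyhole (Hankel-type) slits around each $a_s$, and replace the original circle by the circle $|z| = \rho + \epsilon$ together with $\nu$ Hankel contours $H_s$ that wrap around the slit at $a_s$. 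The contribution of the outer circle is $O((\rho+\epsilon)^{-k})$ by the trivial bound, which accounts for the error term in \eqref{eq: Orlov}; the main term is the sum of the Hankel integrals $\tilde b_s(k) := \frac{1}{2\pi i}\int_{H_s} \frac{f(z)}{z^{k+1}}\,dz$.

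The second step is to evaluate each Hankel integral asymptotically. Near $a_s$, write $f(z) = \sum_j B_{s,j}(a_s - z)^{m_{s,j}} + (\text{holomorphic part})$, where the exponents $m_{s,j}$ are either negative integers or non-integers; the holomorphic part, being analytic across the circle, contributes only to the $O((\rho+\epsilon)^{-k})$ error once we push its slit out. For a single monomial $(a_s - z)^{m}$, the Hankel integral around the slit reduces, after the substitution $z = a_s(1 - t/k)$, to a standard beta/gamma integral producing
\[
\frac{1}{2\pi i}\int_{H_s} \frac{(a_s-z)^{m}}{z^{k+1}}\,dz = \frac{1}{\Gamma(-m)}\, a_s^{m}\, a_s^{-k}\, k^{-(m+1)}\bigl(1 + o(1)\bigr),
\]
using the classical Hankel representation $\frac{1}{\Gamma(-m)} = \frac{1}{2\pi i}\int_H e^{t} t^{m}\,dt$ together with $(1 - t/k)^{-k-1} \to e^{t}$. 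Summing the monomial contributions and keeping only the dominant exponent $m_s$ (the one minimizing the real part among the negative or non-integer exponents) gives the stated form of $\tilde b_s(k)$, with the subleading monomials absorbed into the $o(k^{-(m_s+1)})$ remainder.

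The third step — and the place where care is needed for the uniform version invoked in Corollary~\ref{cor:Orlov} and Lemma~\ref{lem:coeffs-conv} — is to make all the estimates locally uniform in the data. In the present theorem the statement is pointwise for a fixed $f$, so the only real work is to show that the $o(1)$ in the Hankel evaluation and the implied constant in $O((\rho+\epsilon)^{-k})$ can be controlled by $\|f\|$ on a fixed compact piece of the annulus, by $\rho$, by $\epsilon$, and by finitely many Puiseux coefficients $B_{s,j}, m_{s,j}$; this is a matter of estimating the tails of the Puiseux expansions and the $(1-t/k)^{-k-1} - e^{t}$ error on the rescaled Hankel contour, cutting the $t$-integration at $|t| \le k^{\alpha}$ for a suitable $\alpha \in (0,1)$ so that the truncation error is super-polynomially small while $(1-t/k)^{-k-1}$ stays close to $e^t$. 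The main obstacle I anticipate is precisely this bookkeeping: controlling how the tail of the Puiseux series at $a_s$ interacts with the $z^{-k-1}$ factor near the slit, and verifying that the next-order exponent really does produce a genuinely smaller power of $k$ rather than merely a smaller constant. Once that is in place, summing over $s = 1,\dots,\nu$ yields \eqref{eq: Orlov}, and the uniform statement of Corollary~\ref{cor:Orlov} follows by tracking that every constant above depends only on $\rho$, $\epsilon$, and a bound on the relevant data.
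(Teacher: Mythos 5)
Your contour-deformation strategy is exactly that of the paper: Cauchy's formula over a small circle, deformed to the circle of radius $\rho+\epsilon$ together with keyhole contours around the singularities, with the outer circle supplying the $O((\rho+\epsilon)^{-k})$ error and the local contours supplying the main terms. Where you depart from the paper is in how the local contribution near each $a_s$ is evaluated: the paper performs the logarithmic substitution $x=\ln(1+t)$ and invokes Watson's Lemma, which delivers the full asymptotic expansion of the resulting Laplace-type integral in one application, whereas you rescale by $z=a_s(1-t/k)$ and read off the answer from the Hankel-contour representation of $1/\Gamma$ together with $(1-t/k)^{-k-1}\to e^t$. That is a legitimate and arguably more transparent route for a single monomial $(a_s-z)^m$; it is the standard singularity-analysis computation.

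However, there are two genuine gaps in the proposal. First, the decomposition you write near $a_s$ — "singular monomials $\sum_j B_{s,j}(a_s-z)^{m_{s,j}}$ plus a holomorphic part" — has \emph{both} pieces generically given by infinite series (there may be infinitely many non-integer exponents). Evaluating the Hankel integral monomial by monomial then requires a uniform tail bound to justify the interchange of $k\to\infty$ with the infinite sum over $j$; you explicitly flag this as "the main obstacle I anticipate" but leave it open. The paper sidesteps this entirely: only the finitely many \emph{negative}-exponent terms are subtracted off and treated by hand via $\Gamma$-function identities, and all remaining exponents (positive integer and non-integer alike) are kept together in $F$ and processed through Watson's Lemma, which internalises the tail estimate. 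Second, your justification that the holomorphic part "contributes only to the $O((\rho+\epsilon)^{-k})$ error once we push its slit out" does not work as stated: the integer-exponent part of the Puiseux expansion at $a_s$ is only guaranteed to converge in a small disk around $a_s$ and cannot be pushed past the critical circle. The correct (and simpler) observation, which the paper makes, is that integer-exponent monomials are single-valued around $a_s$, so the two sides of the slit cancel and their keyhole integral vanishes identically — no pushing out is needed or available. With those two points repaired — replacing the "push out" by the single-valuedness cancellation, and supplying the tail estimate for the non-integer exponents (for instance, the $k^\alpha$ truncation you suggest) — your argument would be a sound alternative to the paper's Watson's-Lemma route.
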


\begin{proof}
Using Cauchy's integral formula, we have:
\begin{equation}\label{eq:cauchy-contour}
g(k):=\frac{f^{(k)}(0)}{k!}=\frac{1}{2i\pi}\int_{C} \frac{f(u)}{u^{k+1}}du,
\end{equation}
where $C$ is the positively oriented circle of radius $0< r<\rho$ centered at $0$.
\par
We will change the contour $C$ of this integration to $\gamma$ defined as in Figure~\ref{fig:contours}. In other words, the contour $\gamma$ is the boundary $\partial D$ of the domain $D=D(0,\rho+\epsilon) \setminus \bigcup\limits_{s=1}^{\nu} D(a_{s},\epsilon)$ for a choice of $\epsilon > 0$ such that the only singularities of $f$ in $D(0,{\rho+\epsilon})$ are $a_1, \ldots, a_\nu$.
\par
\begin{figure}
    \centering
    \includegraphics[width=0.9\linewidth]{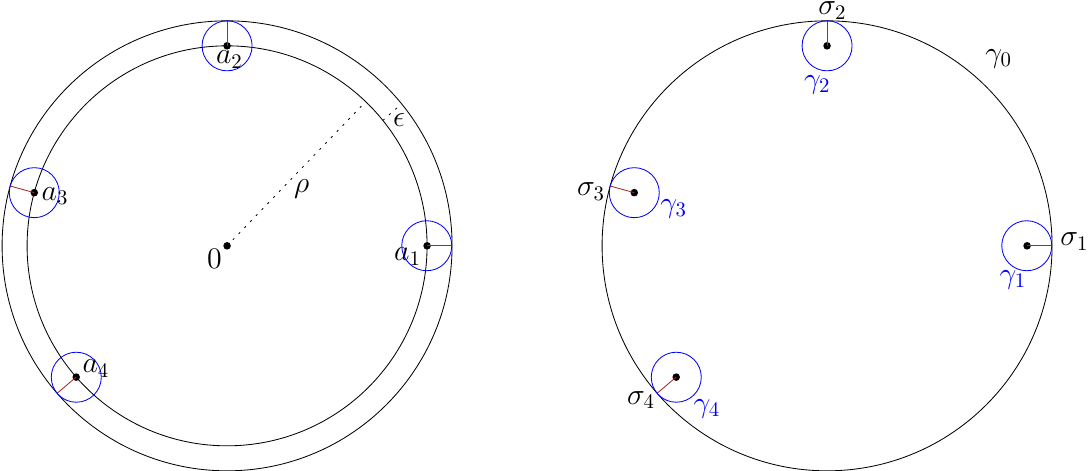}
    \caption{The contour $\alpha$ in Equation \eqref{eq:cauchy-contour} can be replaced by $\gamma_0 \cup \bigcup_{s=1}^\nu \gamma_s$. (The figure displays the case $\nu=4$). On the left, the singularities $a_1, \ldots, a_4$ lie on the circle $|u| = \rho$, and $f$ has no other singularities in $D(0,\rho+\epsilon)$. On the right, the new contour of the integration is the union $\gamma:= \gamma_0 \cup \gamma_1 \cup \cdots \cup \gamma_4$. The cuts $\sigma_s$ are the straight segments joining $a_s$ with $\frac{\rho+\epsilon}{\rho}a_s$.}
    \label{fig:contours}
\end{figure}

Without loss of generality, we assume that the $a_{s}$'s are labeled counterclockwise. Then, the new contour $\gamma$ decomposes into $\gamma_0 \cup \bigcup_{s = 1}^{\nu} \gamma_{s}$ as follows
\begin{itemize}
    \item $\gamma_{s}$ is the boundary of $D({a_{s},\epsilon})$ for $s = 1,\ldots, \nu$;
    \item $\gamma_{0}$ is the union of the $\nu$ circular arcs of radius $\rho + \epsilon$ between the points $\frac{\rho+\epsilon}{\rho}a_{s}$ and $\frac{\rho+\epsilon}{\rho}a_{s+1}$, where the subscripts are considered modulo $\nu$.
\end{itemize}
On $\gamma_{0}$, we have $\left|\frac{f(u)}{u^{k+1}}\right| \leq \frac{K}{(\rho+\epsilon)^{k+1}}$, where $K$ is some constant.
Thus, 
\begin{equation}\label{eq:app-asymp-with-bigO}
g(k)= \frac{1}{2i\pi}\sum\limits_{s=1}^{\nu} \int\limits_{\gamma_{s}} \frac{f(u)}{u^{k+1}}du + O((\rho+\epsilon)^{-k}).
\end{equation}
\par
For $1 \leq s \leq \nu$, we introduce $h_{s}(u)$ which is the Puiseux series of $f$ at $a_{s}$ restricted to the (finitely many) terms with negative exponents. Define  $h(u)=\sum\limits_{s=1}^{\nu} h_{s}(u)$ and take $F(u)=f(u)-h(u)$. 
\par
Denoting by $\sigma_{s}$ the straight segment joining $a_{s}$ to $\frac{\rho+\epsilon}{\rho}a_{s}$, we have that $h$ and $F$ are holomorphic functions on $D(0,\rho+\epsilon) \setminus \bigcup\limits_{s=1}^{\nu} \sigma_{s}$. Thus, to determine the asymptotic of $g(k)$ as in \eqref{eq:app-asymp-with-bigO}, we will study the asymptotic behaviors of the Taylor series coefficients of $h$ and $F$. 

In the rest of the proof, $m$ will denote an arbitrary exponent of one of the considered Puiseux series. In other words, $m$ will be an integer multiple of $\frac{1}{M}$, where $M$ is the least common denominator of the exponents of the Puiseux series at the points $a_{1}, \dots, a_{s}$.
\par
Writing $h_{s}(u)$ as a (finite) sum $\sum\limits_{m<0} c_{s,m}(a_{s}-u)^{m}$, with $m^-_s$ being the least exponent, we can use the $\Gamma$ function to express 
$$b_{s}(k):=\frac{h_{s}^{(k)}(0)}{k!}$$
explicitly as
\begin{align*}
  b_{s}(k) &=\sum\limits_{m<0} \frac{1}{k!} c_{s,m}  (-1)^k(m \cdot (m-1) \cdot \ldots \cdot (m-(k-1))a_s^{m-k} \\
  &= \sum\limits_{m<0} \frac{c_{s,m}}{\Gamma(k+1)} \frac{\Gamma(k-m)}{\Gamma(-m)}a_s^{m-k}.
\end{align*}
Using the property $\lim\limits_{k \to +\infty} k^{m+1}\frac{\Gamma(k-m)}{\Gamma(k+1)} = 1$ of the $\Gamma$ function, we represent $b_s(k)$ asymptotically by the following Puiseux series
\begin{equation}\label{eq: Puiseux-of-frac}
    b_s(k) \sim \sum\limits_{m < 0}   \frac{c_{s,m}}{\Gamma(-m)}k^{-(m+1)}a_s^{m-k},
\end{equation} 
whose term with the least exponent equals
$$\frac{c_{s,m^-_s}}{\Gamma(-m^-_s)}k^{-(m^-_s+1)}a_s^{m^-_s-k}.$$

Next, to compute the contour integral $G_s(k) := \frac{1}{2i\pi}\int\limits_{\gamma_{s}}\frac{F(u)}{u^k}du$, we can replace the contour $\gamma_{s}$ by $\sigma^{+}_{s} \cup \sigma^{-}_{s}$, where both $\sigma^{+}_{s}$ and $\sigma^{-}_{s}$ traverse along $\sigma_s$ but with opposite orientations. Then, in $D(a_s, \epsilon)$, writing 
\begin{equation}\label{eq: taylor-holo-F}
F(u) = \sum\limits_{s=1}^{\nu} \sum\limits_{m \geq 0} d_{s,m} (a_s-u)^{m},
\end{equation}
we have
\begin{align*}
    \frac{1}{2i\pi}\int\limits_{\gamma_{s}} \frac{F(u)}{u^{k+1}}du &= \sum_{m \geq 0} \frac{1-e^{2i\pi m}}{2i\pi}d_{s,m}  \int\limits_{\sigma^+_{s}} \frac{(a_s - u)^{m}}{u^{k+1}}du.
\end{align*} 
We note that the integral in the right-hand side is zero if $m$  is a positive integer. One can rewrite the integrand as
\begin{align*}
    \frac{(a_s - u)^{m}}{u^{k+1}} &= a_s^{m-(k+1)}\left(1- \frac{u}{a_s} \right)^m \exp\left(-(k+1)\ln\left(\frac{u}{a_s}\right) \right)
\end{align*}
and, with the change of variable $t = \frac{u}{a_s} -1$, one has
$$\frac{1}{2i\pi} \int\limits_{\gamma_{s}} \frac{(a_s - u)^{m}}{u^{k+1}}du = \frac{1-e^{2i\pi m}}{2i\pi}e^{-i\pi m} a_s^{m-k}\int\limits_{t=0}^\delta  t^{m} e^{-(k+1) \ln (1+t)}dt$$
where $\delta = \frac{\epsilon}{\rho}$. If we once again change the variable $x = \ln(1+t)$, the problem reduces to the study of the integrals of the form
$$I_{\alpha}(k) := \int_{0}^{\delta_0} x^\alpha \varphi_\alpha(x) e^{-kx}dx,$$
where $\delta_0 = \ln(1+\delta)$ and $\varphi_\alpha(x) = \left(\frac{e^x - 1}{x}\right)^\alpha$. Since $\varphi(x)$ is analytic at $x = 0$, an application of Watson's Lemma yields 
$$I_\alpha(k) \sim \sum_{n \geq 0} \frac{\varphi_{\alpha}^{(n)}(0)}{n!}\frac{\Gamma(\alpha+n+1)}{k^{\alpha+n+1}}  $$
as $k$ tends to $+\infty$ and so $\frac{1}{2i\pi} \int\limits_{\gamma_s} \frac{F(u)}{u^k}du$ is  equivalent to
\begin{align*}
 \sum\limits_{m \geq 0}
  \frac{1-e^{2i\pi m}}{2i\pi}{e^{-i\pi m}}d_{s,m} a_s^{m-k}\sum_{n \geq 0} \left(\frac{\varphi_{m}^{n}(0)}{n!}\frac{\Gamma(m+n+1)}{k^{m+n+1}} \right).
\end{align*}
From the identities $\Gamma(z)\Gamma(1-z)=\frac{\pi}{\sin(\pi z)}$ and 
\begin{align*}
\frac{1-e^{2i\pi m}}{2i\pi}&=\frac{e^{i\pi m}(e^{-i\pi m} - e^{i\pi m})}{2i \pi}=-e^{i\pi m}\frac {\sin \pi m}{\pi },
\end{align*}
we deduce that
\begin{align*}
\frac{1-e^{2i\pi m}}{2i\pi}{e^{-i\pi m}}\Gamma(m+n+1) 
&= \frac{{-}\sin(\pi m)}{\Gamma(-m-n)\sin\left[\pi(m+n+1)\right]} \\
&= \frac{-(-1)^{n+1}}{\Gamma(-m-n)}.
\end{align*}
After simplification, we obtain that $G_s(k)$ is asymptotic to
\begin{align*}
\sum\limits_{m \geq 0}  d_{s,m}  a_s^{m-k}\sum_{n \geq 0} \left(\frac{\varphi_{\alpha}^{n}(0)}{n!}\frac{(-1)^{n}}{\Gamma(-m-n)k^{m+n+1}} \right)
\end{align*}
as $k$ tends to $+\infty$.

Summing over all contours $\gamma_{1},\dots,\gamma_{\nu}$, and reorganizing the latter series, we obtain that $G(k): = G_1(k) + \cdots + G_\nu(k)$ is asymptotic to
 
\begin{equation}\label{eq: holo-asymp}
    \sum\limits_{s=1}^{\nu} 
    \left( 
    \sum\limits_{m,n \geq 0} 
    d_{s,m} 
    a_s^{m}
    \frac{\varphi_\alpha^{n}(0)(-1)^{n}}{n!\Gamma(-m-n)} k^{-(m+n+1)}
    \right) a_{s}^{-k}.  
\end{equation}

Any given value of $m+n+1$ is realised by at most finitely many tuples $(m,n)$ where $n$ is a non-negative integer while $m$ is a non-negative multiple of $\frac{1}{M}$ different from an integer. Therefore, for each $s$, the inner double series can be written as a series indexed by the increasing exponents of $a_s$ whose  term with with least exponent is
$$
\frac{B_{s}}{\Gamma(-m^{+}_s)} k^{-(m^{+}_{s}+1)} a_{s}^{m^{+}_{s}-k},
$$
where $m^{+}_s$ is the least fractional exponent of $(u-a_s)$ in $F(u)$ expressed as in Equation \eqref{eq: taylor-holo-F} and $B_s$ is the coefficient of that term.

Since $f(u)=F(u)+h(u)$, we combine the contributions from the holomorphic part and the Puiseux series part and find the following asymptotic representation of \eqref{eq:app-asymp-with-bigO} 

\begin{equation}\label{eq:g(k)Asymptotic}
g(k) \sim \sum\limits_{s=1}^{\nu} \tilde{b}_{s}(k)a_{s}^{-k} + O((\rho+\epsilon)^{-k}),
\end{equation}
where $\tilde{b}_{s}(k)$ is a Puiseux series in $k$ which is the sum of \eqref{eq: Puiseux-of-frac} and \eqref{eq: taylor-holo-F}. It follows that the asymptotic behavior of the leading term of $\tilde{b}_s(k)$ can be given in the form stated the theorem which finishes the proof.
\end{proof}

By tweaking Orlov's proof, we will prove the uniform convergence of the first terms of the above asymptotic series that is needed for our purposes. Essentially, this boils down to moving the center of the diagram in Figure \ref{fig:contours}, and hence the whole diagram, inside a small disk around $0$. Note that this changes the branch cut around the singularity that is used.

\begin{corollary}\label{cor:Orlov}
Let $f$ be a holomorphic function on an open disk $D(0,\rho)$ that extends to a Puiseux series at a unique singularity $a$ with $|a|=\rho$ in $D(a, \epsilon')$ for some $\epsilon' > 0$, and holomorphically at every other point of the boundary circle $\partial D(0,\rho)$. Then, there exist $\delta>0$ and $\epsilon>0$ such that the formula
$$
\frac{f^{(k)}(z)}{k!}=b(k,z)(a-z)^{-k}+O((\vert a-z\vert(1+\epsilon))^{-k}).
$$
holds for any $z \in \overline{D}_\delta = \overline{D(0,\delta)}$.
Here, we have
$$
b(k,z)=K (a-z)^{m_0}k^{-(m_0+1)}
(L+
O(k^{-\frac{1}{M}}\vert a-z\vert^{\frac{1}{M}}))
$$
where $K,L \in\mathbb{C}\setminus\{0\}$ and $M \in \mathbb{N}$ are  constants depending only on $f$ and $\delta$ while $m_0=k_0/M$ is the smallest number between the least negative exponent and the exponent of the first non-integer term of the Puiseux series of $f$.
\end{corollary}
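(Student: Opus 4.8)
The plan is to re-run the argument proving Theorem~\ref{thm:Orlov} with the base point $0$ replaced by a variable point $z\in\overline D(0,\delta)$, and then to check that each estimate is uniform in $z$. First I would choose a contour independent of $z$: fix $\epsilon_1>0$ so small that $a$ is the only singularity of $f$ in $D(0,\rho+\epsilon_1)$, fix $r$ with $\delta<r<\rho$, and note that Cauchy's formula gives $f^{(k)}(z)/k!=\frac1{2\pi i}\int_{C}\frac{f(u)}{(u-z)^{k+1}}\,du$ for \emph{every} $z\in\overline D(0,\delta)$, where $C=\partial D(0,r)$, since $F$ is holomorphic on $\overline D(0,r)$ and $z$ lies inside $C$. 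As in the proof of Theorem~\ref{thm:Orlov} I would split $f=F+h$, where $h(u)=\sum_{m<0}c_m(a-u)^m$ is the principal part of the Puiseux expansion of $f$ at $a$ and $F=f-h$; both are holomorphic on $D(0,\rho+\epsilon_1)\setminus\sigma$ for the straight cut $\sigma$ joining $a$ to $\frac{\rho+\epsilon_1}{\rho}a$, and $F$ is in addition bounded near $a$ (its Puiseux series there has only non-negative exponents).

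I would then treat the two parts separately. Since $h$ is a finite sum of explicit branch functions, expanding $(a-u)^m=(a-z)^m\big(1-\tfrac{u-z}{a-z}\big)^m$ at $u=z$ yields
$$
\frac{h^{(k)}(z)}{k!}=(a-z)^{-k}\sum_{m<0}\frac{c_m}{\Gamma(-m)}\,(a-z)^m\,\frac{\Gamma(k-m)}{\Gamma(k+1)},
$$
and $\frac{\Gamma(k-m)}{\Gamma(k+1)}=k^{-(m+1)}(1+O(1/k))$ with the error uniform in $z$ because only finitely many exponents $m$ occur and $z$ enters only through the explicit powers of $(a-z)$. For $F$ I would write $F^{(k)}(z)/k!=\frac1{2\pi i}\int_C\frac{F(u)}{(u-z)^{k+1}}\,du$ and deform $C$ outward, past the circle $|u|=\rho$, to $\gamma_0\cup\gamma_1$, where $\gamma_0$ is the circular arc of radius $\rho+\epsilon_1$ and $\gamma_1$ is a keyhole around $a$ hugging the cut $\sigma$; the region swept contains no singularity of $F$ (off $\sigma$) nor the pole $z$ of $(u-z)^{-k-1}$. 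On $\gamma_0$ one has $|u-z|\ge \rho+\epsilon_1-\delta$, so that contribution is $O\big((\rho+\epsilon_1-\delta)^{-k}\big)$; since $|a-z|\le\rho+\delta$ for $z\in\overline D(0,\delta)$, shrinking $\delta$ and the final $\epsilon$ makes $\rho+\epsilon_1-\delta>|a-z|(1+\epsilon)$ for all such $z$, so this term is absorbed into the claimed error $O\big((|a-z|(1+\epsilon))^{-k}\big)$.

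The substantive step is the keyhole integral. Letting the small circle shrink and using boundedness of $F$ at $a$, $\int_{\gamma_1}$ reduces to the jump of $F$ across $\sigma$, so with $F(u)=\sum_{m\ge0}d_m(a-u)^m$ near $a$ it equals $\sum_{m\ge0}\frac{1-e^{2\pi i m}}{2\pi i}d_m\int_{\sigma^+}\frac{(a-u)^m}{(u-z)^{k+1}}\,du$ (integer $m$ drop out). I would then run the change of variables of Theorem~\ref{thm:Orlov} while carrying the parameter $z$: with $u=a(1+\tau)$ and then $x=\ln\!\big(1+\tfrac{a\tau}{a-z}\big)$ the integral becomes $(-1)^m(a-z)^{m-k}\int_0^{\delta_0(z)}x^m\varphi_m(x)e^{-kx}\,dx$, where $\varphi_m(x)=\big(\tfrac{e^x-1}{x}\big)^m$ is the same fixed analytic function as before and $\delta_0(z)=\ln\!\big(1+\tfrac{a\,\epsilon_1}{\rho(a-z)}\big)$. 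As $z$ ranges over the compact disk $\overline D(0,\delta)$ the endpoint $\delta_0(z)$ stays in a fixed compact subinterval of $(0,\infty)$ and the integrand is unchanged, so a uniform version of Watson's lemma gives $\int_0^{\delta_0(z)}x^m\varphi_m(x)e^{-kx}\,dx=\frac{\Gamma(m+1)}{k^{m+1}}(1+O(1/k))$ uniformly, the cut-off at $\delta_0(z)$ contributing only an exponentially small uniform error; the phase identities already used in the proof of Theorem~\ref{thm:Orlov} turn the $m$-th leading coefficient into $\frac{d_m}{\Gamma(-m)}$.

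Finally I would assemble: combining the series from $h$ and $F$ and factoring out $(a-z)^{-k}$ and $k^{-(m_0+1)}$, with $m_0=k_0/M$ the smaller of the least negative exponent and the least non-integer non-negative exponent, produces $f^{(k)}(z)/k!=b(k,z)(a-z)^{-k}+O\big((|a-z|(1+\epsilon))^{-k}\big)$ with $b(k,z)=K(a-z)^{m_0}k^{-(m_0+1)}\big(L+O(k^{-1/M}|a-z|^{1/M})\big)$, where the correction $k^{-1/M}|a-z|^{1/M}$ comes from the next exponent $m_0+\tfrac1M$ in the combined series (supplying the factor $|a-z|^{1/M}$ through $(a-z)^{m-k}$ and $k^{-1/M}$ through $k^{-(m+1)}$) together with the subleading Watson term, all uniform over $\overline D(0,\delta)$. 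I expect the main obstacle to be exactly this final bookkeeping: making the error in Watson's lemma uniform as the endpoint $\delta_0(z)$ varies, choosing the branches of the fractional powers consistently in $z$, and extracting the precise $O(k^{-1/M}|a-z|^{1/M})$ shape rather than a cruder remainder.
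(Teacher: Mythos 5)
Your overall plan — re-run Orlov's argument with the base point $z$ moving over $\overline D(0,\delta)$ and track uniformity — is exactly what the paper does, and the decomposition $f=h+F$, the jump across the cut, the change of variables reducing to a Watson-type integral, and the bookkeeping with $m_0$ and the $\Gamma$-functions all match. The substantive difference is your choice of contour, and it creates a real gap that you seem to half-notice but then paper over.

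You keep the contour fixed: the cut $\sigma$ runs radially from $a$ to $\frac{\rho+\epsilon_1}{\rho}a$, and you parametrise $\sigma^+$ as $u=a(1+\tau)$ with $\tau\in[0,\epsilon_1/\rho]$ \emph{real}. After the substitution $x=\ln\!\bigl(1+\tfrac{a\tau}{a-z}\bigr)$, the variable $x$ is \emph{not} real for $z\neq 0$, because $\tfrac{a}{a-z}$ has nontrivial argument; the path of integration is the image of a real segment under $\ln(1+\cdot)$ composed with a rotation, which is a genuine complex curve, and the endpoint $\delta_0(z)=\ln\!\bigl(1+\tfrac{a\epsilon_1}{\rho(a-z)}\bigr)$ is a complex number. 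Your statement ``the endpoint $\delta_0(z)$ stays in a fixed compact subinterval of $(0,\infty)$'' is therefore false as written, and the invocation of Watson's lemma on $\int_0^{\delta_0(z)} x^m\varphi_m(x)e^{-kx}\,dx$ is not directly licensed: $x^m$ with non-integer $m$ must be a specified branch along a complex path, and the Laplace asymptotics would have to be established for a $z$-dependent complex contour (or preceded by a contour rotation to $[0,\Re\delta_0(z)]$ using $\Re\delta_0(z)\ge c>0$ and $\Re x>0$ along the path, plus an exponentially small arc correction and a uniform branch choice). None of this is in your proposal; it is precisely the ``main obstacle'' you flag but do not resolve.

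The paper avoids all of this by letting the contour move with $z$. It sets $\epsilon(z)=\tfrac{|z-a|}{\alpha}\epsilon'$ and parametrises the cut as $u=a+s(a-z)$ with \emph{real} $s\in[0,\eta]$, where $\eta=\epsilon(z)/|a-z|=\epsilon'/\alpha$ is a fixed constant. Then $u-z=(a-z)(1+s)$, the substitution $1+s=e^v$ keeps $v$ real, and the endpoint $\eta'=\log(1+\eta)$ is a fixed positive real number independent of $z$. The branch bookkeeping is then carried out explicitly via the auxiliary functions $\tilde g_\pm$, and the uniform error is obtained from a Lagrange remainder bound on the real interval $[0,\eta']$. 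I would recommend either adopting the paper's $z$-dependent cut (so that the Laplace integral is real with a fixed endpoint), or, if you insist on the fixed cut, explicitly carrying out the contour rotation and uniform branch argument outlined above and replacing the incorrect claim that $\delta_0(z)\in(0,\infty)$ with a bound on $\Re\delta_0(z)$ and on the sector containing the integration path.
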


\begin{proof}
We follow the notation and strategy of the proof of the previous theorem. First, suppose that $a$ is a pole of $f$. Then, as in \eqref{eq: Puiseux-of-frac}, the singular part $h(z) = \sum\limits_{m_0 \leq m <0} c_m(a-z)^m$ of $f$ has a finite number of terms and has the following asymptotic description of its derivatives
\begin{align*}
    \frac{h^{(k)}(z)}{k!} &\sim \sum\limits_{m_0 \leq m <0}  \frac{c_m}{\Gamma(-m)}k^{-(m+1)}(a-z)^{m-k} \\
    &= (a-z)^{-k}\left[\frac{c_{m_0}}{\Gamma(-m_0)}(a-z)^{m_0}k^{-(m_0+1)}\left(1 + O(k^{-\frac{1}{M}}|a-z|^{\frac{1}{M}}) \right)\right]
\end{align*}
throughout $\overline{D}_{\delta}$, for  any $\delta< \rho$. We subtract $h(z)$ from $f(z)$ 
and can now assume that $f$ satisfies the hypothesis of the corollary, but has a pole-free Puiseux series expansion at $z = a$ given by
\begin{equation}\label{eq: puiseux-at-a}
    f(z) = \sum_{k \geq k_0} c_k(a-z)^\frac{k}{M}.
\end{equation}

First we may assume by shrinking $\epsilon'$ that $f$ may be holomorphically extended to $D(0,\rho+2\epsilon')$, cut at the singularity $a$. This gives us the room to perturb the diagram in figure 11, moving the center of the circle from $0$ to $z$, and the circle $\gamma_0$ to a circle $\gamma(z):=\partial D(z,\vert z-a\vert +\epsilon(z))$. More precisely, let $\delta=\epsilon'/2$, $\alpha=2(\rho+\delta)$  and $\epsilon(z)=\frac{\vert z-a\vert}{\alpha}\epsilon'$. Then 
$$
D(z,\vert z-a\vert +\epsilon(z))\subset D(0,\rho+2\epsilon') \text{ if } z\in \overline{D}_\delta. 
$$
We may now redo the calculations of the theorem. Note that moving the center of $\overline{D}_\delta$ changes the cut $\sigma(z)$ as well as $f$ through the analytic continuation across $\sigma(z)$. In particular, Equation~\eqref{eq:app-asymp-with-bigO} writes as
\begin{equation}
\label{eq:intveryfirst}
\frac{f^{(k)}(z)}{k!}=\frac{1}{2\pi i}
\int_{\gamma_a(z)}\frac{f(u)}{(u-z)^{k+1}}du+
O ((\vert z-a\vert(1+\epsilon))^{-k})
\end{equation}
(with $\epsilon:=\epsilon'/\alpha>0)$. Equation~\ref{eq:intveryfirst} remains true for any $z\in \overline{D}_\delta$, and in fact uniformly, since the implicit constant in the principal term is bounded by the maximum value of (the continuations of) $\vert f\vert $ to any cut disk $D(z, \vert z-a\vert+\epsilon(z)), \ z \in \overline{D}_\delta$. If $0$ is close to the singularity, we make a different choice of $\epsilon(z)$. If $\rho<\epsilon'/8$ and $z\in D(0,\epsilon'/8)$ then $D(z,2\vert z-a\vert)\subset D(a,\epsilon')$ and so we may take $\epsilon(z)=\vert z-a\vert$ and $\epsilon=1$. 

Fixing $z \in \overline{D}_{\delta}$, as in the proof of the theorem, we rewrite the integral 
\begin{equation}\label{eq: decompose-cuts}
I:= \int_{\gamma_a(z)}\frac{f(u)}{(u-z)^{k+1}}du = \int_{\sigma_a^{+}}\frac{f(u)}{(u-z)^{k+1}}du + \int_{\sigma_a^-}\frac{f(u)}{(u-z)^{k+1}}du.  
\end{equation}
Parametrising $\sigma_a^+(z)$ as $u = a+s(a-z)$, $0 \leq s \leq \epsilon$, where $\epsilon = \epsilon(z)/|a-z|$, we obtain
\begin{equation}
\int_{\sigma_a^{+}(z)}\frac{f(u)}{(u-z)^{k+1}}du = \frac{1}{(a-z)^{k}}\int_0^\eta \frac{f(a+s(a-z))}{(1+s)^{k+1}}ds
\end{equation}
Changing the variable by setting $1+s = e^{v}$ results in
\begin{equation}\label{eq: change-to-v}
    \frac{1}{(a-z)^{k}}\int_0^{\eta'} f\left(a+(e^v - 1)(a-z)\right)e^{-kv} dv,
\end{equation}
where $\eta' = \log(1+\epsilon) > 0$.

The Puiseux expansion in \eqref{eq: puiseux-at-a}
implies that the function $\tilde{f}(t) := \sum\limits_{k \geq k_0} c_kt^k$ is analytic in a neighbourhood of $t=0$ and $f(z) = \tilde{f}((a-z)^{1/M})$ for a choice of $t = (a-z)^{1/M}$. The function $\tilde{f}$ is just the analytic
continuation of $f$ to the cover $t \mapsto t^{M} = a-z$. In particular, the analytic continuation of $f(z)$ = $\tilde{f}((a-z)^{1/M})$ counterclockwise along a circle with center $z = a$ is given by $\tilde{f}(e^{2\pi i/M}(a-z)^{1/M})$.

Consider $\theta(v):=  \frac{e^v -1}{v}$, which is analytic and non-zero at $v = 0$. Then $\theta^{\frac{1}{M}}(v)$ is defined for $\vert v \vert < 2\pi$ in such a way that it restricts to the $M$-th root which is real and positive for $v \geq 0$ real. Now, define
\begin{align}\label{eq: define-g}
    \tilde{g}_+(w) &:= \tilde{f}\left(w\cdot\theta^{\frac{1}{M}}(( w^M / (z-a))\right) \\
     \tilde{g}_-(w) &:= \tilde{f}\left(e^{\frac{2\pi i}{M}}w\cdot\theta^{\frac{1}{M}}(( w^M / (z-a))\right).
\end{align}
Thus, along $\sigma_a^+(z)$ we have
\begin{align}
    f(a+(e^v-1)(a-z)) &= \tilde{f}(((v(z-a))^{\frac{1}{M}}\cdot\theta^{\frac{1}{M}}(v)) \notag \\
    &= \tilde{g}_+(((z-a)v)^{\frac{1}{M}}), \label{eq: f-to-g+} 
\end{align}
and, similarly, the analytic continuation of $f$ along $\sigma_a^-(z)$ is 
\begin{equation}\label{eq: f-to-g-} 
f(a+e^{\frac{2\pi  i}{M}}(e^v-1)(a-z)) = \tilde{g}_-(((z-a)v)^{\frac{1}{M}}). 
\end{equation}
By setting $$g(v) = \tilde{g}_+(((z-a)v)^{\frac{1}{M}}) -\tilde{g}_-(((z-a)v)^{\frac{1}{M}}),$$  we can return to the integral $I$ in \eqref{eq: decompose-cuts}. By \eqref{eq: change-to-v}, \eqref{eq: f-to-g+}, and \eqref{eq: f-to-g-}, it equals
\begin{equation}\label{eq: f-to-g}
    I = \frac{1}{(a-z)^k}\int_0^{\eta'} g(v)e^{-kv}dv.
\end{equation}

Application of Lagrange's error term to the first-order Maclaurin polynomial of $g(v)$ proves that the inequality
\begin{equation}\label{eq: lagrange}
   |g(v) - (1-e^{2k_0\pi i/M}) c_{k_0}(a-z)^{m_0}v^{m_0}| \leq D |(a-z)v|^{\frac{k_0+1}{M}} 
\end{equation}
holds for all $0 \leq v \leq \eta'$ and $z \in \overline{D}_{\delta}$, where $c_{k_0}(a-z)^{m_0}, m_0 = \frac{k_0}{M}$, is the first non-integer term of the Puiseux series of $f$, and $D$ is an upper bound for $g^{(k_0+1)}$ on the segment between $0$ and $\eta'$.

By \eqref{eq: f-to-g} and  \eqref{eq: lagrange}, 
\begin{equation}
    |I - (a-z)^{m_0 - k}\int_0^{\eta'}c_{k_0}v^{m_0}e^{-kv} dv| \leq |a-z|^{m_0+ \frac{1}{M} -k} \int_0^{\eta'} D v^{m_0 +\frac{1}{M}}e^{-kv} dv.
\end{equation}
Changing the variable to $u = kv$, we obtain
\begin{align} \label{eq: u-to-kv}
    |I -k^{-(m_0+1)}(a-z)^{m_0-k} \int_0^{k\eta'}c_{k_0}u^{m_0}e^{-u}du| \nonumber \\ 
    \leq k^{-(m_0+\frac{1}{M}+1)}|z-a|^{m_0+\frac{1}{M}-k} \int_0^{k\eta'}  Du^{m_0+\frac{1}{M}}e^{-u}du.
\end{align}
To obtain the desired asymptotic, we apply a standard proof of Watson's Lemma. Extending the interval of integration of the latter integral to the whole positive half-axis, we have a bound
$$ \int_0^{k\eta'} Du^{m_0+\frac{1}{M}}e^{-u}du \leq \int_0^{+\infty} Du^{m_0+\frac{1}{M}}e^{-u}du = D \Gamma(m_0+1/M+1).$$
On the other hand, we have
\begin{align*}
\int_{0}^{k\eta'} c_{k_0}u^{m_0}e^{-u}du &= c_{k_0} \left( \int_0^{+\infty}  u^{m_0}e^{-u}du - \int_{k\eta'}^{+\infty}u^{m_0}e^{-u}du \right) \\
&= c_{k_0}(\Gamma(m_0+1)+O(e^{-k\eta'/2}))
\end{align*}
Here, the above asymptotic term arises as follows. Consider the integral $\int_{k\eta'}^{+\infty}u^{m_0}e^{-u}du$. There exists $r = r(m_0)$ such that $u^{m_0}e^{-u/2} \leq 1$ for all $u \geq r_0$. Hence, if $k\eta' \geq r_0$, i.e. if $k>>0$, 
$$\int_{k\eta'}^{+\infty} u^{m_0}e^{-u} du \leq \int_{k\eta'}^{+\infty} e^{-u/2}du =2e^{-k\eta'/2}.$$
 Corollary~\ref{cor:Orlov} then follows from \eqref{eq: u-to-kv}, noting that the term $O(e^{-k\eta'/2})$ is negligible compared to the other asymptotic terms.
\end{proof}
\begin{remark}
\label{rmk:orlov}
    The constants in the asymptotic estimate in the preceding lemma depend solely on the values of the function and its analytic continuation, and the disk we consider. Note two things in the proof that will be important in the proof of Proposition~\ref{prop:L1LocConvergence}]: first that the result is valid in a punctured disk around the singularity. This (together with a compactness argument) means that the result holds (with a suitable $\epsilon$) in a compact subset (punctured at the singularity) of an open set in which all points satisfy the condition of the corollary, in particular for a compact subset of an open Voronoi cell. Secondly, note that in the case with poles, the term with minimal degree $m_0<0$ will give the dominating factor in the expression. Indeed, in that case the result of the lemma may be formulated simpler as
\begin{equation}\label{eq: finaluniform}
\frac{f^{(k)}(z)}{k!}=(a-z)^{m_0-k}k^{-(m_0+1)}(C+B(z,k)),
\end{equation}
where $C\neq 0$ and $B(z,k)$ goes uniformly to $0$ as $k\to + \infty$.
\end{remark}

\subsection{Laplacian of a Puiseux series}
\label{sec:puiseauxlaplacian}

It is known that the Laplacian of the logarithmic absolute value of
a meromorphic function, considered as a $L^1_{loc}$-function (and hence a distribution), is in algebraic-geometric terms the divisor of the function expressed in terms of
Dirac measures (see Theorem 3.7.8 in \cite{Rans}). The same holds for a converging Puiseux series, as we show below.

\begin{lemma}\label{lem:Divisor}
Let $f(z)= \sum\limits_{k \geq r} a_{k}z^{\frac{k}{m}}$ be a converging Puiseux series where $a_{r} \neq 0$. Then, in some cut disk $U:=D(0, R)\setminus [-R,0]$, we have the following equality of $L^1_{loc}$-functions:
$$\frac{2}{\pi }\frac{\partial^2}{\partial \bar z\partial z}\log\vert f\vert=\frac{r}{k}\delta_0.$$
\end{lemma}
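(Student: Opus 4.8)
The plan is to reduce everything to the classical fact, quoted just before the lemma, that for $F$ holomorphic the distributional Laplacian of $\log|F|$ is the divisor of $F$ (this is the case $m=1$), by peeling off the leading monomial of the Puiseux series.

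Concretely, on the cut disc $U=D(0,R)\setminus[-R,0]$ I would fix a single-valued branch of $z\mapsto z^{1/m}$ and set $w=z^{1/m}$. Since $f$ is a \emph{convergent} Puiseux series, $\widehat f(w):=\sum_{k\ge r}a_k w^{k}$ is holomorphic on a disc $\{|w|<R^{1/m}\}$ for $R$ small, and it factors as $\widehat f(w)=w^{r}\,\widehat h(w)$ with $\widehat h(w)=\sum_{j\ge 0}a_{r+j}w^{j}$ holomorphic and $\widehat h(0)=a_r\neq 0$. Shrinking $R$ so that $\widehat h$ is zero-free there, one obtains on $U$ the pointwise identity
$$\log|f(z)|=\tfrac{r}{m}\log|z|+\log\bigl|\widehat h(z^{1/m})\bigr|,$$
and each summand — hence $\log|f|$ itself — is in $L^{1}_{loc}$ near $0$, the second one being even locally bounded. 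Now $\log|\widehat h(z^{1/m})|$ is the pull-back of the harmonic function $\log|\widehat h|$ by the holomorphic map $z\mapsto z^{1/m}$, so it is harmonic on $U$ and is killed by $\partial_{\bar z}\partial_z$ there; and for the remaining term one invokes $\tfrac{2}{\pi}\partial_{\bar z}\partial_z\log|z|=\delta_0$ (equivalently $\tfrac{1}{2\pi}\Delta\log|z|=\delta_0$), which is exactly the $m=1$ case from \cite[Theorem 3.7.8]{Rans}. Combining gives $\tfrac{2}{\pi}\partial_{\bar z}\partial_z\log|f|=\tfrac{r}{m}\delta_0$, the denominator on the right being the ramification index $m$.

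The one point that needs care — and the only non-routine step — is the precise meaning of this distributional equality: the branch of $z^{1/m}$, and hence $\log|\widehat h(z^{1/m})|$, genuinely jumps across the slit $[-R,0]$, so on the full disc $D(0,R)$ its Laplacian would carry, besides $\tfrac rm\delta_0$, a spurious contribution supported on the slit. This is an artifact of the chosen cut, not of the geometry. In the situation where the lemma is applied (Lemma~\ref{lem:LaplaceF}), $z^{1/m}$ is the coordinate upstairs on the $m$-fold branched covering defined by a primitive of $\omega$, so the test functions against which the identity is paired descend from $X^{\ast}$ and therefore extend \emph{smoothly} across the cut; the two edges of the slit occur with opposite orientation and the boundary terms cancel in pairs, leaving only the mass $\tfrac rm$ at the vertex. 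Equivalently, one can run the argument entirely on the branched cover, where $\widehat f$ is single-valued and $\tfrac{1}{2\pi}\Delta_w\log|\widehat f|=r\,\delta_0$ by \cite[Theorem 3.7.8]{Rans}, and push down using that the symmetric product $\prod_{j=0}^{m-1}\widehat h\!\left(e^{2\pi i j/m}z^{1/m}\right)$ is a bona fide holomorphic, zero-free function of $z$ near $0$; both routes yield the stated identity.
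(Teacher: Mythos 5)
Your proof is correct and follows the same factorisation route as the paper's argument: both peel off the leading monomial $z^{r/m}$ of the Puiseux series (the $\tfrac{r}{k}$ in the displayed statement is a typo for $\tfrac{r}{m}$, and both treatments implicitly read it that way), apply $\tfrac{2}{\pi}\,\partial_{\bar z}\partial_z\log|z|=\delta_0$ to that monomial, and reduce the lemma to showing that the cofactor $g(z)=\widehat h(z^{1/m})$ — locally bounded, with $g(0)=a_r\ne0$ — contributes nothing. Where the two treatments part ways is precisely in how this last step is justified, and here you are both more explicit and more careful than the paper.

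The paper introduces a thin strip $C(\eta)$ around the slit and the origin, observes that $\log|g|$ is harmonic off that strip, and bounds the pairing of $\partial_{\bar z}\partial_z\log|g|$ with a test function over $U\cap C(\eta)$ by $O(\eta)$. You instead observe directly that $\log|g|$ is the pull-back of the harmonic function $\log|\widehat h|$ by a holomorphic branch of $z^{1/m}$ — cleaner than the paper's unproved assertion of harmonicity — and you then flag the subtlety the paper leaves tacit: across the slit $[-R,0]$ the branch of $z^{1/m}$ jumps, so $\log|g|$ is generically discontinuous there, and the distributional Laplacian on the \emph{full} disc $D(0,R)$ acquires a spurious term supported on the slit. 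This is a genuine issue with the paper's argument as written: for fixed $\eta$, Green's identity on $U\setminus C(\eta)$ produces boundary terms along $\partial C(\eta)$ which, as $\eta\to0$, become integrals of the \emph{jump} of $\log|g|$ and of its normal derivative along $[-R,0]$, and these do not cancel pointwise; the paper's displayed equality between $\int_{U\cap C(\eta)}\partial_{\bar z}\partial_z\log|g|\cdot\psi$ and $-\int_{U\cap C(\eta)}\log|g|\cdot\partial_{\bar z}\partial_z\psi$ simply suppresses these terms (and even has the wrong sign for a double integration by parts). Your resolution — that in the only place the lemma is invoked, Lemma~\ref{lem:LaplaceF}, the test functions descend from $X^{\ast}$ where the function is single-valued, so the slit contributions cancel pairwise, or equivalently one computes on the branched cover — correctly identifies why the extra mass on the slit is an artifact. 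One small caveat on your second route: pushing the computation down via the symmetric product $\prod_{j}\widehat h(e^{2\pi ij/m}z^{1/m})$ really controls the Laplacian of the \emph{symmetrised} sum $\sum_j\log|\widehat h(e^{2\pi ij/m}z^{1/m})|$; this is exactly the quantity seen by $X^{\ast}$-test functions, but it is $m$ copies of the single-branch object, so one must keep track of that factor (or, equivalently, interpret the Dirac mass upstairs on $X^{\ast}$) to land on the displayed $\tfrac{r}{m}\,\delta_0$ rather than $r\,\delta_0$. In short, your proof is the paper's factorisation argument carried out with the cut handled honestly.
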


\begin{proof}
Writing $f(z)=z^{r/k}g(z)$, we have $\log\vert f\vert=\log\vert g(z)\vert+ \frac{r}{k}\log \vert z\vert$, so it suffices to check that $\frac{\partial^2\log\vert g(z)\vert}{\partial \bar z\partial z}$ vanishes as a distribution.
\par
Let $\psi$ be a test function. Since $g(0) \neq 0$, there exists $M>0$ such $M$ is greater than $\vert \log\vert g(z) \vert \vert \cdot \vert \frac{\partial^2\psi}{\partial \bar z\partial z}\vert $ in the (cut) disk.
\par
For an arbitrary $\eta>0$, we introduce the strip $C(\eta)$ characterised by the inequalities $\Re z\leq \eta$ and $-\eta\leq \Im z\leq \eta$. Then, we subdivide the integral
$$ \int\limits_{U}\frac{\partial^2\log\vert g(z) \vert}{\partial \bar z\partial z} \psi(z)\,dz\wedge d\bar{z} $$
into the integral over $U\setminus C(\eta)$, which vanishes, since $\log\vert g \vert $ is harmonic in this set, and the integral over $U\cap C(\eta)$. Since the absolute value of
$$ \int\limits_{U \cap C(\eta)}\frac{\partial^2\log\vert g(z) \vert}{\partial \bar z\partial z} \psi(z)\,dz\wedge d\bar{z} = - \int\limits_{U \cap C(\eta)}\log\vert g(z) \vert\frac{\partial^2 \psi(z)}{\partial \bar z\partial z}\,dz\wedge d\bar{z} $$ 
is bounded by $2M\eta(R+\eta)$
for any $\eta>0$, this integral also has to vanish. Thus,
$$\int\limits_U \frac{\partial^2\log\vert g(z)\vert}{\partial \bar z\partial z} \psi(z) dz\wedge \bar{dz} = 0 $$
for any test function $\psi$. The claim follows.
\end{proof}


\begin{thebibliography}{99}

\bibitem{BCGGM}
M. Bainbridge, D. Chen, Q. Gendron, S. Grushevsky, M. M\"{o}ller.
\newblock {\em Strata of $k$-differentials}.
\newblock {Algebraic Geometry}, Volume 6, Issue 2, 196–233, 2019.

\bibitem{BoHa}
R.~B\o gvad and Ch.~H\"agg.
\newblock {\em A refinement for rational functions of P\'olya's method to construct Voronoi Diagram}.
\newblock {Journal of Mathematical Analysis and Applications}, Volume 452, Issue 1, 312-334, 2017.

\bibitem{Boissy}
C. Boissy, 
\newblock {\em Connected components of the strata of the moduli space of meromorphic differentials}.
\newblock {Comment. Math. Helv.}, Volume 90, Issue 2, 255-286, 2015.

\bibitem{Chb}
N. G. Chebotarev.
\newblock {\em Teoriya Algebraičeskih Funkcii. (Theory of Algebraic Functions)}, OGIZ, Moscow-Leningrad, 1948. 


\bibitem{ClEd}
J.~G.~Clunie and A.~Edrei.
\newblock {\em Zeros of successive derivatives of analytic functions having a single essential singularity}.
\newblock {J. Anal. Math.}, Volume 56, 141-185, 1991.


\bibitem{Ge}
R.~M.~Gethner.
\newblock {\em On the zeros of the derivatives of some entire functions of finite order}.
\newblock {Proc. Edinburgh Math. Soc.}, Volume 28, 381-407, 1985.

\bibitem{Ge2}
R.~M.~Gethner.
\newblock {\em Zeros of the successive derivatives of Hadamard gap series in the unit disk}.
\newblock {Michigan Math. J.}, Volume 36, 403, 1989.

\bibitem{Ha}
Ch.~H\"agg.
\newblock {\em The asymptotic zero-counting measure of iterated derivatives of a class of meromorphic functions}.
\newblock {Arkiv för Matematik}, Volume 57, Number 1, 107-120, 2019.

\bibitem{Hay}
W.~K.~Hayman.
\newblock {\em Meromorphic Functions}, Clarendon Press, Oxford, 1964.

\bibitem{Hi}
L.~Hillairet.
\newblock {\em Spectral theory of translation surfaces: a short introduction}.
\newblock {S\'emin. Th\'eor. Spectr. G\'eom.}, Vol.28, 51-62,2010.

\bibitem{Ke}
V.~Keo.
\newblock {\em Generalisation of P$\acute{\mbox{o}}$lya's Theorem for Voronoi Diagram Construction}, Master Thesis, Royal University of Phnom Penh, 2021.

\bibitem{Ko}
A.~Kokotov.
\newblock {\em Polyhedral surfaces and determinant of Laplacian}.
\newblock {Proceedings of the American Mathematical Society}, Volume 141, Number 2, 725-735, 2013.

\bibitem{L}
B. Ya. Levin.
\newblock{\em Lectures on entire functions}, Translations of Mathematical Monographs vol.150, American Mathematical Society, Providence, RI, 1996.

\bibitem{NST}
D. Novikov, B. Shapiro, and G.~Tahar.
\newblock {\em On Limit Sets for Geodesics of Meromorphic Connections}.
\newblock {J. Dyn. Control Syst.}, Volume 29, 55-70, 2023.

\bibitem{Orlov}
A.~G.~ Orlov.
\newblock {\em On asymptotic behavior of the Taylor coefficients of algebraic functions}.
\newblock {Sib. Math. J.}, Volume 35, 1002-1013, 1994.

\bibitem{Pol}
G.~P\'olya.
\newblock {\em \"{U}ber die  Nullstellen sukzessiver Derivierten}.
\newblock {Math. Zeit.}, Volume 12, 36-60, 1922.

\bibitem{Pol1}
G.~P\'olya.
\newblock {\em On the zeros of the derivatives of a function and its analytic character}.
\newblock {Bulletin of the AMS}, Volume 49, Issue 3, 178-191, 1943.

\bibitem{PrSh1}
C.~L~Prather and J.~K.~Shaw.
\newblock {\em Zeros of Successive Derivatives of Functions Analytic in a neighbourhood of a Single Pole}.
\newblock {Michigan Math. J.}, Volume 29, 111-119, 1982.

\bibitem{PrSh2}
C.~L~Prather and J.~K.~Shaw.
\newblock {\em A Shire Theorem for Functions with Algebraic Singularities}.
\newblock {Internat. J. Math. Math. Sci.}, Volume 5, no. 4, 691-706, 1982.

\bibitem{Rans}
T.~Ransford.
\newblock {\em Potential Theory in the Complex Plane}, Cambridge University Press, Cambridge 1995. 

\bibitem{Rob}
M.~G.~Robert.
\newblock {\em A P\'olya shire theorem for entire functions}, PhD thesis, University of Wisconsin-Madison, 1982. 

\bibitem{Ti}
E. C. Titchmarsh.
\newblock {\em The theory of functions}, 2nd ed., Oxford University Press, Oxford, 1939.

\bibitem{Tr}
M. Troyanov.
\newblock {\em Les surfaces euclidiennes à singularités coniques}.
\newblock {Enseign. Math.}, Volume 32, 79-94, 1986.

\bibitem{Wa}
R. J. Walker.
\newblock {\em Algebraic curves}, \newblock {Springer-Verlag}, New York-Heidelberg, 1978, Reprint of the 1950 edition. 

\bibitem{Wei}
M. Weiss.
\newblock {\em P\'olya's Shire Theorem for Automorphic Functions}.
\newblock {Geometriae Dedicata}, Volume 100, 85-92, 2003.

\bibitem{Whi}
J.~M.~Whittaker.
\newblock {\em Interpolatory Function Theory}, Stechert-Hafner, Inc., New York, 1964

\bibitem{WDS} N.~J.~Willis, A.~K.~Didier, K.~M.~Sonnanburg, How to Compute a Puiseux Expansion, arXiv:0807.4674.

\bibitem{AZ}
A.~Zeriahi.
\newblock {\em A Minimum Principle for Plurisubharmonic Functions}, \newblock {Indiana Univ. Math. J.}, Volume 56, no. 6, 2671-2696, 2007.

\bibitem{Zor}
A. Zorich.
\newblock {\em Flat Surfaces}.
\newblock {Frontiers in Number Theory, Physics, and Geometry}, Volume 1, 439-585, 2006.

\end{thebibliography}
\end{document}